\newcommand{\ig}[2]{\vcenter{\xy (0,0)*{\includegraphics[scale=#1]{fig/#2}} \endxy}}
\newcommand{\igc}[2]{\begin{center} \includegraphics[scale=#1]{fig/#2} \end{center}}
\definecolor{myred}{rgb}{0.75,0,0}
\definecolor{mygreen}{rgb}{0,0.5,0}
\definecolor{myblue}{rgb}{0,0,0.65}
\newtheorem{thm}{Theorem}[section]
\newtheorem{lemma}[thm]{Lemma}
\newtheorem{prop}[thm]{Proposition}
\newtheorem{cor}[thm]{Corollary}
\newtheorem*{prop*}{Proposition}
\theoremstyle{definition}
\newtheorem{defn}[thm]{Definition}
\newtheorem{notation}[thm]{Notation}
\newtheorem{example}[thm]{Example}
\newtheorem*{defn*}{Definition}
\theoremstyle{remark}
\newtheorem{remark}[thm]{Remark}
\numberwithin{equation}{section}
    \def\NM{{\mathbb{N}}}
    \def\AC{{\mathcal{A}}}
    \def\CC{{\mathcal{C}}}
\def\HB{{\mathbf H}}
    \def\NC{{\mathcal{N}}}
    \def\RC{{\mathcal{R}}}
    \def\SC{{\mathcal{S}}}
\def\a{\alpha}
\def\b{\beta}
\def\G{\Gamma}
\def\l{\lambda}
\def\s{\sigma}
\let\phi=\varphi
\def\Z{{\mathbbm Z}}
\def\1{\mathbbm{1}}
\newcommand{\un}{\underline}
\newcommand{\ot}{\otimes}
\newcommand{\pa}{\partial}
\newcommand{\co}{\colon}
\newcommand{\sqot}{\boxtimes}
\renewcommand{\to}{\rightarrow}
\newcommand{\into}{\hookrightarrow}
\renewcommand{\sl}{\mathfrak{sl}}
\newcommand{\refequal}[1]{\xy {\ar@{=}^{#1}
(-1,0)*{};(1,0)*{}};
\endxy}
\DeclareMathOperator{\Hom}{{\rm Hom}}
\DeclareMathOperator{\End}{{\rm End}}
\DeclareMathOperator{\id}{{\rm id}}
\DeclareMathOperator{\irr}{\rm{irr}}
\DeclareMathOperator{\Ob}{\rm{Ob}}
\newcommand{\SymCat}{\SC {ym}}
\begin{document}

\begin{abstract}

In this paper we give a version of Bergman's diamond lemma which applies to certain monoidal categories presented by generators and relations. In particular, it applies to: the Coxeter
presentation of the symmetric groups, the quiver Hecke algebras of Khovanov-Lauda-Rouquier, the Webster tensor product algebras, and various generalizations of these.

We also give an extension of Manin-Schechtmann theory to non-reduced expressions.
\end{abstract}

\title{A diamond lemma for Hecke-type algebras}

\author{Ben Elias} \address{University of Oregon, USA}

\maketitle

%========================
\section{Introduction}
\label{sec:intro}
%========================

When presenting an algebra by generators and relations, it is not obvious how large this algebra will be. Even with innocuous-looking relations, it may turn out that the algebra was zero
all along. It is often possible to find a set of \emph{monomials}, i.e. elements of the algebra expressed as words in the generators, which span the algebra, and which one expects is a
basis. However, proving the linear independence of this set of monomials can be quite a difficult task.

For example, consider the so-called \emph{Coxeter presentation} of the symmetric group $S_n$ or of its group algebra: it is generated by symbols $s_i$, $1 \le i \le n-1$, modulo the relations
\begin{subequations} \label{subeq:coxeter}
\begin{equation} \label{eq:s2} s_i s_i = 1, \end{equation}
\begin{equation} \label{eq:sisj} s_i s_j = s_j s_i \quad \textrm{ for } j \ne i \pm 1, \end{equation}
\begin{equation} \label{eq:sisjsi} s_i s_j s_i = s_j s_i s_j \quad \textrm{ for } j = i \pm 1. \end{equation}
\end{subequations}
Fix an arbitrary reduced expression $\un{w} = s_{i_1} s_{i_2} \cdots s_{i_d}$ for each $w \in S_n$. After a great deal of combinatorics, one can prove that these particular reduced expressions span the group algebra. Now one can ask whether these reduced expressions are linearly independent.

One way to check linear independence is to find an action of the algebra where the elements act by linearly independent operators. For example, we can consider the standard action of the
symmetric group $S_n$ on a set of size $n$, and linearize it to get a representation of the group algebra. Clearly the action of distinct permutations is linearly independent, from which
we deduce that our chosen reduced expressions form a basis. This is quite painless.

However, there are numerous contexts where it is not as easy to find an action of the algebra. Consider the generalized Hecke algebra $\HB(\a,\b)$ of $S_n$ (see
\cite[\S 7.1]{HumpCox}), which replaces \eqref{eq:s2} with \begin{equation} \label{eq:s2Hecke} s_i s_i = \a s_i + \b \end{equation} for certain parameters $a$ and $b$ in the base ring.
The same combinatorics as above (with an additional inductive argument by the length of an expression) implies that our chosen reduced expressions will span $\HB(\a,\b)$. It is still
the case that our chosen reduced expressions will form a basis, but finding an action which illustrates this can be hard.

For another example, consider the quiver Hecke algebra defined by Khovanov-Lauda \cite{KhoLau09} and Rouquier \cite{Rouq2KM-pp}. It has a presentation similar in style to \eqref{subeq:coxeter}
above, with extra generators $x_1, \ldots, x_n$ forming a polynomial algebra, and some additional relations involving them. A sample relation in the nilHecke algebra (a special case of
the quiver Hecke algebra) is \begin{equation} \label{eq:NH} x_i s_i - s_i x_{i+1} = s_i x_i - x_{i+1} s_i = 1. \end{equation} One expects there to be a basis of the form $\{f\un{w}\}$,
where $f$ lives in a basis of the polynomial algebra, and $w \in S_n$ (and $\un{w}$ is a fixed reduced expression for $w$). That this is a basis is proven by constructing the
``polynomial representation'' of the quiver Hecke algebra, and confirming that the suspected basis acts in a linearly independent fashion. However, in forthcoming work of the author
with Agustin Garcia \cite{EGarcia1}, we will construct variations on the quiver Hecke algebra which have no polynomial representations, and as such we need a different tool to prove
linear independence.

One rather painful approach is to construct an abstract version of the regular representation. If one has a set $X_{\irr}$ of monomials suspected to be a basis, one creates an abstract
vector space with basis given by $X_{\irr}$. One works out how each generator acts on this vector space, and then checks that the relations are satisfied. It then becomes obvious that
the suspected basis acts on this abstract vector space by linearly independent operators. While this approach works in theory for any presentation, computing how the generators act and
checking the relations can be a prohibitive amount of work. In special cases it can be done nonetheless. This is the approach taken by Humphreys for the Hecke algebra $\HB(\a,\b)$ in
\cite[\S 7.1 through 7.3]{HumpCox}, or for the PBW basis of the universal enveloping algebra in \cite[\S 17.4]{HumpLie}.

Thankfully, there is a tool designed for precisely this purpose, the Bergman diamond lemma. In some sense, one can view this diamond lemma as an efficient tool to check that the action
on the abstract regular representation is well-defined. It is a valuable tool for algebraists, and if you, dear reader, have not yet seen it, it is well worth your time to learn it! See \S\ref{sec:original} for a summary.

\begin{remark} The history of the Bergman diamond lemma vis a vis the related idea of Gr\"obner bases is somewhat complex, as they were pursued independently for a time. See \S\ref{subsec:vsGrobner} for a comparison, and for a discussion of previous work related to the results of this paper. \end{remark}

The Bergman diamond lemma fixes a direction for each relation, and only permits it to be applied in that direction. For example, in the Coxeter presentation for $S_3$, we might always send $s_1 s_1 \mapsto 1$, $s_2 s_2 \mapsto 1$, and $s_1 s_2 s_1 \mapsto s_2 s_1 s_2$. The monomials $\{1, s_1, s_2, s_1 s_2, s_2 s_1, s_2 s_1 s_2\}$ are \emph{irreducible} in that no relation can be applied to them, while every other monomial has either $s_1 s_1$, $s_2 s_2$, or $s_1 s_2 s_1$ as a subword. Then one asks which overlaps can occur between relations, and whether such an overlap can be resolved using the relations. For example, $s_1 {\bf s_1} s_2 s_1$ contains two overlapping relations, overlapping in the bold $s_1$, and hence is ambiguous because it can be hit by a relation in two different ways:
\[ s_1 s_1 s_2 s_1 \mapsto s_2 s_1\]
and
\[ s_1 s_1 s_2 s_1 \mapsto s_1 s_2 s_1 s_2 \mapsto s_2 s_1 s_2 s_2 \mapsto s_2 s_1.\]
Since these two ways converged to the same element $s_2 s_1$, we say that the ambiguity is resolvable. For any words $x$ and $y$, the word $x s_1 s_1 s_2 s_1 y$ also has an ambiguity, but this one is resolvable because the minimal ambiguity $s_1 s_1 s_2 s_1$ is resolvable. The minimal ambiguities in this example are \begin{equation} \label{eq:minimalS3} \{ s_1 s_1 s_1, s_2 s_2 s_2, s_1 s_1 s_2 s_1, s_1 s_2 s_1 s_1, s_1 s_2 s_1 s_2 s_1\}, \end{equation} all of which are resolvable.

The Bergman diamond lemma states that, if the set of monomials possesses a sufficiently nice partial order compatible with our relations, then the irreducible monomials form a basis if
and only if every minimal ambiguity is resolvable. This reduces the proof of linear independence to a finite (and in fact, very small) amount of work. For example, it gives a very
efficient proof that the PBW basis is linearly independent, see \cite[\S 3]{Bergman}.

\begin{remark} If you are defining an algebra by generators and relations, it is good practice to check whether the overlap ambiguities are resolvable, even if you do not plan to use
the Bergman diamond lemma. When I referee a paper, I typically check the first few overlap ambiguities; you might be surprised how many significant errors I have found this way. Be
responsible, check your ambiguities! \end{remark}

Unfortunately, the Bergman diamond lemma does not apply to the Coxeter presentation of $S_n$ for $n \ge 4$! There is no choice of direction on the relations which will lead to resolvable
ambiguities. A thorough discussion of what goes wrong can be found in \S\ref{sec:coxeterproblems}. Similarly, the Bergman diamond lemma does not apply to the Hecke algebra $\HB(\a,\b)$, or
the quiver Hecke algebras of Khovanov-Lauda-Rouquier, or their generalizations in the forthcoming work \cite{EGarcia1}.

It is the goal of this paper to provide a variant of the Bergman diamond lemma which does apply to these presentations. That is, it applies to \emph{Hecke-type presentations} of
algebras and algebroids. These are presentations of monoidal categories (and their endomorphism rings), generated by certain crossings and dots (the dots are analogous to a polynomial subalgebra), with relations like \eqref{subeq:coxeter} and \eqref{eq:NH}. See Definition \ref{def:HeckePart1} for a precise definition. The upshot is the following theorem, stated more precisely as Theorem \ref{thm:MDLforHecke}.

\begin{thm} Given a Hecke-type presentation of an algebra with $n$ strands, there is a basis of the form $\{f \un{w}\}$, where $f$ lives in (a basis for) the ``polynomial" subalgebra, and $\un{w}$ is a fixed reduced expression for $w \in S_n$, if and only if the following types of ambiguities are resolvable: 
\begin{equation} \label{eq:minimalHecke} \{ s_1 s_1 s_1, s_1 s_1 s_2 s_1, s_1 s_2 s_1 s_1, s_1 s_2 s_1 s_2 s_1, s_1 s_2 s_1 s_3 s_2 s_1, s_1 s_1 f, s_1 s_2 s_1 f, s_1 f g \}. \end{equation}
\end{thm}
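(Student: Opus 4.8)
The plan is to prove the theorem in three steps: reduce ``basis'' to a confluence statement, establish confluence through a general monoidal diamond lemma, and then show that the \textit{a priori} infinite family of minimal ambiguities collapses onto the eight words listed. For the first step, observe that $\{f\un{w}\}$ spans the algebra unconditionally: braid moves carry any reduced word for $w$ to the chosen expression $\un{w}$, the quadratic relation eliminates non-reduced words by an induction on word length, and the dot relations push all polynomial generators to the left; this is the classical combinatorics recalled in the introduction. Hence ``$\{f\un{w}\}$ is a basis'' is equivalent to ``$\{f\un{w}\}$ is linearly independent'', and the ``only if'' implication is immediate: each rewriting step is a genuine identity in the algebra, so if the putative basis is linearly independent then the two reductions of an ambiguity word reach the same element, i.e. the ambiguity is resolvable.

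For the ``if'' direction I build the abstract regular representation. Let $M$ be the free module on the symbols $\{f\un{w}\}$, and define a left action of each generator by a rewriting procedure: slide a dot to the left using the dot relations, and to act by a crossing $s_i$, prepend it to $\un{w}$ and then use braid moves either to form $\un{s_i w}$ when $\ell(s_i w)>\ell(w)$, or to expose a square $s_i s_i$ and apply the quadratic relation, recursing on length, when $\ell(s_i w)<\ell(w)$. Linear independence of $\{f\un{w}\}$ follows once this action is well defined, i.e. once the procedure is confluent. Using the word-length/Bruhat filtration for termination and order-compatibility, a diamond-lemma argument reduces confluence to resolvability of the minimal overlap ambiguities among the \emph{genuine} relations -- the quadratic relation, the braid relation, and the dot relations -- the commutativity of crossings acting on disjoint strands being part of the monoidal interchange law and hence contributing no ambiguity of its own.

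Two principles then collapse the list. First, horizontal composition gives translation invariance: an ambiguity supported on strands $\{i,i+1,\dots\}$ resolves if and only if its translate down to $\{1,2,\dots\}$ does, so a single representative per translation class suffices -- which is why $s_1 s_1 s_1$ appears but $s_2 s_2 s_2,\dots$ do not. Second, and this is the crux, one must show that confluence of the $s$-only part of the procedure is already forced by the ambiguities supported on at most four strands and involving only \emph{adjacent} crossings: the coherences for disconnected parabolics such as $\langle s_i,s_{i+2}\rangle$ are vacuous because those crossings commute. Establishing this is precisely the promised extension of Manin--Schechtmann theory, and of the combinatorics of higher Bruhat orders, from reduced to \emph{non-reduced} expressions: one shows that any two $s$-rewriting sequences with a common source and target are joined by ``moves between moves'' supported on standard parabolics of rank at most $3$, and that in the presence of the quadratic relation the resulting elementary coherences are generated by those for $S_2$, $S_3$, and the connected rank-$3$ parabolic $S_4$. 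Enumerating overlaps in these small cases produces $s_1 s_1 s_1$ (quadratic versus quadratic), $s_1 s_1 s_2 s_1$ and $s_1 s_2 s_1 s_1$ (quadratic versus braid), $s_1 s_2 s_1 s_2 s_1$ (the rank-$2$ higher coherence), and $s_1 s_2 s_1 s_3 s_2 s_1$, a reduced word for the longest element of $S_4$ (the rank-$3$, Zamolodchikov-type coherence); adjoining the dot relations yields the remaining three, $s_1 s_1 f$ (quadratic versus a dot-slide), $s_1 s_2 s_1 f$ (braid versus a dot-slide), and $s_1 f g$ (a dot-slide versus multiplication in the polynomial subalgebra). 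The main obstacle is exactly the four-strand bound for the $s$-coherences -- proving the non-reduced Manin--Schechtmann statement and verifying that no ambiguity supported on five or more strands is an independent condition -- which is where the real work lies.
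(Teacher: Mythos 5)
Your overall architecture matches the paper's: pass to the monoidal/diagrammatic setting so that distant commutations $s_is_j=s_js_i$ become rectilinear isotopies rather than relations, run a diamond-lemma/confluence argument, and collapse the ambiguities to the listed small configurations, with the braid-part coherence coming from a Manin--Schechtmann-type statement extended to non-reduced expressions. But as written the proposal has two genuine gaps, and they are exactly the load-bearing parts of the theorem. First, the order. You invoke a ``word-length/Bruhat filtration for termination and order-compatibility,'' but word length and the underlying permutation are preserved by the braid relation $sts\mapsto \lambda\, tst+(\text{lower})$, so this filtration does not orient the braid moves at all; and the whole difficulty with the Coxeter presentation for $n\ge 4$ is that no naive orientation of the braid relations yields unique normal forms (the longest element of $S_4$ has several ``irreducible'' reduced expressions under any such orientation). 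What is needed is a partial order on isotopy classes of (possibly non-reduced, dotted) diagrams, compatible with plugging-in and with DCC, defined by restricting an expression to every triple of strands, together with the theorem that each class $\bar{X}_w$ has a \emph{unique} sink, which is a reduced expression. Without that unique-sink statement the confluent normal forms need not be the claimed set $\{f\un{w}\}$, and your recursive definition of the regular representation has no induction scaffold.

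Second, the finiteness of the ambiguity list. In the planar setting two overlapping instances of $sts\mapsto tst$ need not contain any fixed minimal subdiagram: an arbitrary diagram $Z$ (any expression of any permutation on the intervening strands) can occupy the gap between them, so there are a priori infinitely many genuinely distinct overlap configurations, supported on arbitrarily many strands. The theorem's content is that resolvability of $ststs$ and $stsuts$ (plus the quadratic and dot overlaps) suffices for all of these; proving it requires reducing $Z$, relative to the order and by induction, to a reduced expression of one's choice and then using the parabolic decomposition $S_k=S_{k-1}\cup S_{k-1}s_1S_{k-1}$ to shrink $Z$ to either the identity or a single crossing. You assert the collapse to rank-$\le 3$ parabolics / four strands and explicitly label it ``where the real work lies,'' but you do not supply the argument; since the ``if'' direction of the theorem is precisely this collapse together with the unique-sink order, the proposal leaves the key implication unproved rather than proving it by a different route.
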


\begin{remark} There are many ambiguities of each type listed above. For example, $s_1 s_2 s_1 s_1$ is a permutation on three strands, and in a Hecke-type algebra these strands may have
different colorings, so there is one such relation for each of the possible colorings of the strands. Nonetheless, checking these ambiguities is a finite and relatively small amount of
work, and this is precisely the least amount of work one needs to do. \end{remark}

To prove this theorem, we use Manin-Schechtmann theory (or a minor variant thereupon) in an essential way. Manin and Schechtmann \cite{ManSch} made a careful analysis of reduced
expressions in the symmetric group, and orientations on the reduced expression graph, which we extend to non-reduced expressions. We believe this has independent interest. See
\S\ref{sec:coxeterproblems} for more details.

Along the way we discuss some other variants of the Bergman diamond lemma, such as a version for monoidal presentations of monoidal categories, and a version over polynomial rings (or
other base rings) which need not lie in the center of the algebra.

{\bf Addendum}. This paper appeared concurrently with a related and independent paper by Dupont \cite{Dupont}, which uses techniques established a decade earlier by Guiraud-Malbos
\cite{GuiMal}\footnote{Indeed, the original plan was to publicize this work after \cite{EGarcia1} was complete, but the appearance of \cite{Dupont} prompted an early release and the addition of several addenda.} Guiraud and
Malbos studied rewriting theory (e.g. Bergman-style arguments) in higher categories, and developed a theory of ``derivations'' which, in some sense, replaces the partial order used
in Bergman's diamond lemma. Their main example in \cite[Chapter 5.4]{GuiMal} is indeed the Coxeter presentation of the symmetric group. Dupont uses this technology, and other tools
from rewriting theory dealing with non-termination (a failure of the DCC property discussed in \S\ref{sec:original}), to apply Bergman-style arguments to the 2-category which
categorifies the quantum group. This is more impressive than our results, which apply only to the quiver Hecke algebras which categorify the positive half of the quantum group.
Dupont's thesis introduced the work of one community of mathematicians (working on rewriting theory in higher categories) to another community of mathematicians (categorification
theorists, my milieu) which were mostly oblivious to it. There is a significant amount of recent literature on rewriting theory I was unaware of when I wrote this paper, but rather
than recalling it here, we point the reader to \cite{Dupont, GuiMal} for a more detailed account. Regardless, we feel that our methods (using Manin-Schechtmann theory) are
independent from and more explicit than those of Guiraud-Malbos, so are still a worthwhile addition to the literature.

{\bf Acknowledgements}. The idea for this paper germinated long ago, in response to a question of Mikhail Khovanov when I was his admiring graduate student. Thanks to Agustin Garcia for the impetus to release this upon the world. The author was supported by NSF CAREER grant DMS-1553032, and by the
Sloan Foundation. We would also like to thank the anonymous referee for helpful suggestions, which improved the clarity of the exposition.

%========================
\section{Bergman's diamond lemma}
\label{sec:original}
%========================

%========================
\subsection{Statement of the lemma}
\label{subsec:originaldiamond}
%========================

Fix a commutative ring $\Bbbk$, which will be the base ring for all our constructions.

Suppose one has defined an algebra $\AC$ over $\Bbbk$ by generators $\SC$ and relations $\RC$. Let $X$ denote the set of \emph{monomials}, i.e. words in $\SC$. Given words $A, B \in X$,
we let $AB$ denote their concatenation. Each relation $r \in \RC$ is a $\Bbbk$-linear combination inside $\Bbbk \cdot X$, and the ideal $I$ generated by these relations is the same as
the $\Bbbk$-submodule spanned by $ArB$ for each $A, B \in X$. So, as a $\Bbbk$-module, $\AC$ is the quotient of $\Bbbk \cdot X$ by $I$.

Let $\le$ be a partial order on $X$. We say that it is a \emph{semigroup partial order} if $A, B, B', C \in X$ and $B \le B'$ implies that $ABC \le AB'C$. We say it is \emph{compatible
with $\RC$} if every relation $r \in \RC$ can be rewritten as an equation $W_r = f_r$, where $W_r \in X$, and $f_r$ is a $\Bbbk$-linear combination of monomials $A$ satisfying $A < W_r$.
In other words, every linear combination in $r$ has a unique maximal monomial $W_r$, and its coefficient in $\Bbbk$ is invertible.

To apply the Bergman diamond lemma, we will need a partial order $\le$ which \begin{itemize} \item is a semigroup partial order, \item is compatible with $\RC$, and \item satisfies the
descending chain condition (DCC). \end{itemize} Let us fix such a partial order for the following discussion. Note that there is no requirement that the partial order respects the length
of a word, or that $B < ABC$.

\begin{example} We follow this example throughout the section. Suppose that $\SC = \{x,y,z\}$ and $\RC = \{yx-xy-1, zx-xz-2, zy-yz-3\}$. Then we can choose the \emph{degree lexicographic order}, the (total) order $<$ where
$A < B$ if the word $A$ is shorter than $B$, or if they have the same length and $A$ is smaller in the lexicographic order, with $x < y < z$. Degree lexicographic orders are always semigroup
partial orders, and this order satisfies the DCC. The order is compatible with $\RC$, as we may write our relations as $yx = xy + 1$, $zx = xz + 2$, and $zy = yz + 3$.
\end{example}

We can choose to apply relations only in one direction, replacing the monomial $W_r$ with the linear combination $f_r$. Instead of writing $W_r = f_r$ we write $W_r \mapsto f_r$.
Formally, for each $A, B \in X$ and $r \in \RC$, we define an \emph{elementary resolution} $\rho_{ArB}$ to be the $\Bbbk$-linear map on $\Bbbk \cdot X$ which sends $AW_r B$ to $A f_r B$,
and fixes every other monomial. A \emph{resolution} is a composition of elementary resolutions. We call a monomial \emph{irreducible} if it is fixed by every resolution, i.e. it has no
subwords of the form $W_r$ for any relation $r$. Let $X_{\irr}$ denote the set of irreducible monomials.

\begin{example} A monomial is irreducible if and only if it has the form $x^a y^b z^c$ for some $a, b, c \ge 0$. Here are two different resolutions of the monomial $zyx$:
\begin{subequations}
\begin{equation} \label{eq:zyx1} zyx \mapsto yzx + 3x \mapsto yxz + 2y + 3x \mapsto xyz + z + 2y + 3x, \end{equation}
\begin{equation} \label{eq:zyx2} zyx \mapsto zxy + z \mapsto xzy + 2y + z \mapsto xyz + 3x + 2y + z. \end{equation}
\end{subequations}
\end{example}

By our assumptions on the partial order, a finite number of elementary resolutions will take any monomial to a linear combination of irreducible monomials. Consequently, $X_{\irr}$ is a
spanning set for $\AC$ over $\Bbbk$. The question is whether it is a basis. After all, a given monomial could conceivably be resolved into a linear combination in $\Bbbk \langle X_{\irr}
\rangle$ in two different ways, and the difference between these resolutions would give a linear relation in $X_{\irr}$.

Suppose a monomial $A$ has both $W_r$ and $W_s$ appearing inside, for $r, s \in \RC$. We refer to the triple $(A,W_r,W_s)$ as an \emph{ambiguity}; here $W_r$ and $W_s$ refer to specific
subwords of $A$ (which may have many subwords of the form $W_r$), and we assume either that $W_r$ and $W_s$ are distinct subwords, or that $r \ne s$. In theory, $W_r = W_s$ for $r \ne s$
is possible, in which case $(W_r, W_r, W_s)$ is an ambiguity. There are two possible resolutions of $A$, one using the relation $r$ and the other $s$. We wish to show that these two
resolutions can be further resolved until they agree. More precisely, suppose that $A = B W_r C = D W_s E$. Then we say the ambiguity is \emph{(jointly) resolvable} if one can apply
further resolutions to $\rho_{BrC}(A)$ and $\rho_{DsE}(A)$ to arrive at the same element of $\Bbbk \cdot X$.

\begin{example} There is an ambiguity $(zyx, zy, yx)$, and our computations in \eqref{eq:zyx1} and \eqref{eq:zyx2} verify that this ambiguity is resolvable. \end{example}

The first main result of the diamond lemma says that if every ambiguity is resolvable, then there is a unique resolution of every monomial into a linear combination of irreducible
monomials.

When the subwords $W_r$ and $W_s$ do not overlap, we call $(A,W_r,W_s)$ a \emph{disjoint ambiguity}. These are the easy cases. Let $A = B W_r C W_s D$ for some (possibly empty) words $B,
C, D$. It is clear that one can resolve both $B f_r C W_s D$ and $B W_r C f_s D$ to the same linear combination $B f_r C f_s D$.

\begin{example} There is a disjoint ambiguity $(xxzyzy, zy, zy)$. Though unclear from the notation, the first $zy$ refers to the third and fourth letter, while the second $zy$ to the
fifth and sixth letter of the word $xxzyzy$. We do not feel like more precise notation is called for in this paper. \end{example}

There are only two ways in which subwords of a word can intersect nontrivially. The first is an \emph{overlap ambiguity}, where $W_r = B{\bf C}$ and $W_s = {\bf C}D$, and $A = LBCDM$, so that $W_r$ and $W_s$ intersect in the bold copies of $C$. Now it is not
obvious that $Lf_r DM$ and $LBf_s M$ should have a joint resolution. If the \emph{minimal} such ambiguity, namely $(BCD,W_r,W_s)$, is resolvable, then any overlap ambiguity containing
$BCD$ is resolvable.

The second kind of ambiguity is an \emph{inclusion ambiguity}, where $W_r = {\bf C}$ and $W_s = B{\bf C}D$. Again, if the minimal inclusion ambiguity $(BCD, W_r, W_s)$ is resolvable, then
any inclusion ambiguity containing $BCD$ is resolvable.

\begin{remark} \label{rmk:noinclusion} You have probably never seen a presentation with an inclusion ambiguity. One can always modify the presentation to avoid inclusion ambiguities, see
\cite[\S 5.1]{Bergman}. A very stupid example of a (non-resolvable) inclusion ambiguity would be the relations $x \mapsto 1$ and $x \mapsto 2$, and the triple $(x,x,x)$. \end{remark}

The second main result of the diamond lemma is merely the statement that one need only check the minimal overlap and inclusion ambiguities, to determine the resolvability of all
ambiguities. It is essential to observe that, while there are infinitely many overlap ambiguities, there are very few minimal overlap ambiguities, which reduces the
task of checking all overlap ambiguities to a finite amount of work.

\begin{example} Concluding our example from above. The ambiguity $(zyx, zy, yx)$ is the only minimal ambiguity, and it is resolvable. Hence $\{x^a y^b z^c\}_{a,b,c \ge 0}$ forms a basis for the algebra. \end{example}

\begin{example} \label{ex:S3} Consider again the Coxeter presentation for $S_3$, with $s = s_1$ and $t = s_2$. Let us use crossing diagrams to represent words in $s$ and $t$, as below. 
\[ {
\labellist
\small\hair 2pt
 \pinlabel {$s = $} [ ] at -10 10
 \pinlabel {$t = $} [ ] at 54 10
\endlabellist
\centering
\ig{1}{S3gens}
} \]
Let us use the degree lexicographical order where $t < s$. Then we get elementary resolutions $sts \mapsto tst$, $ss \mapsto 1$, and $tt \mapsto 1$. Then the reader should confirm that the only minimal ambiguities are the overlap ambiguities pictured below, and that they are all resolvable.
\begin{equation} \ig{1}{S3overlap} \end{equation}
Note that $tstt$ is not an overlap ambiguity, as one never uses the relation $sts \mapsto tst$ in the other direction. \end{example}

\begin{thm}[Bergman diamond lemma] \label{thm:BDL} (See \cite[Theorem 1.2]{Bergman}) Let $\AC$ be an algebra presented over a commutative (central) base ring $\Bbbk$ by generators $\SC$
and relations $\RC$. Suppose that one has a partial order $\le$ which is a semigroup partial order, is compatible with $\RC$, and satisfies the DCC. Then the irreducible monomials
$X_{\irr}$ form a basis for $\AC$ if and only if each ambiguity is resolvable, if and only if each minimal ambiguity (overlap and inclusion) is resolvable, if and only if each minimal
ambiguity is resolvable relative to $\le$ (see Definition \ref{def:relative} below). In this case, we say that the data $(\SC,\RC,\le)$ is \emph{Bergman type}. \end{thm}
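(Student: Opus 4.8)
The plan is to reproduce Bergman's original argument, organized around the notion of a \emph{reduction-unique} element. Given the three hypotheses on $\le$, it is a standard consequence of the DCC (applied to the finitely many monomials of strictly smaller order that a single elementary resolution can produce) that every element of $\Bbbk\cdot X$ is carried into $\Bbbk\cdot X_{\irr}$ by some finite composition of elementary resolutions; hence $X_{\irr}$ already spans $\AC$, and the whole content of the theorem is the question of linear independence. Call $\a\in\Bbbk\cdot X$ \emph{reduction-unique} if every composition of elementary resolutions carrying $\a$ into $\Bbbk\cdot X_{\irr}$ produces the same element, which we then denote $r_\infty(\a)$.

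The first step is the bookkeeping lemma: the reduction-unique elements form a $\Bbbk$-submodule of $\Bbbk\cdot X$ on which $r_\infty$ is $\Bbbk$-linear, and moreover if $\a$ is reduction-unique then so is $A\a B$ for all $A,B\in X$, with $r_\infty(A\a B)=r_\infty\bigl(A\,r_\infty(\a)\,B\bigr)$. This is where essentially all the care goes: one must verify that a reduction performed ``inside'' a subword lifts to a reduction of the ambient word, and that a reduction of $\a$ and a reduction of $A\a B$ can always be interleaved; the linearity clause is then a short diagram chase, granted that every monomial occurring is itself reduction-unique. I expect this lemma to be the main obstacle — once it is in place, the rest is either formal rewriting theory or routine.

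The heart of the proof is a Noetherian induction along $\le$ (legitimate by the DCC): assuming every monomial $<A$ is reduction-unique, show $A$ is. Since $f_r<W_r$ in the semigroup order, the image $\rho_{BrC}(A)$ of $A$ under any applicable elementary resolution is a $\Bbbk$-combination of monomials $<A$, hence reduction-unique by the inductive hypothesis and the submodule lemma, so $r_\infty$ is defined on it. A standard confluence argument (local confluence plus termination yields confluence) then reduces reduction-uniqueness of $A$ to the equality $r_\infty\bigl(\rho_{B_1r_1C_1}(A)\bigr)=r_\infty\bigl(\rho_{B_2r_2C_2}(A)\bigr)$ for any two applicable elementary resolutions. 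This comparison splits into the three ways the subwords $W_{r_1},W_{r_2}$ can meet inside $A$. If they are disjoint, each reduction is completed by applying the other relation and the two converge by the submodule lemma. If they overlap or one includes the other, write $A=L\,(EFG)\,M$ with both subwords inside $EFG$; the congruence clause of the bookkeeping lemma reduces the claim to the equality of the two $r_\infty$-values of the \emph{minimal} ambiguity $(EFG,W_{r_1},W_{r_2})$, which holds by the inductive hypothesis if $EFG<A$ and is exactly the resolvability assumption if $EFG=A$. (For the variant ``resolvable relative to $\le$'' of Definition \ref{def:relative}, one runs the same induction, observing that the difference of the two reductions is visibly controlled by relations strictly below $A$; this gives the a priori weaker-looking hypothesis the same strength.)

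With every monomial reduction-unique, $r_\infty$ extends to a $\Bbbk$-linear map $\Bbbk\cdot X\to\Bbbk\cdot X_{\irr}$ fixing $X_{\irr}$ pointwise. Each generator $A(W_r-f_r)B$ of the ideal $I$ lies in its kernel, since $AW_rB$ and $Af_rB$ admit a common reduction; hence $r_\infty$ descends to a $\Bbbk$-linear section $\AC=\Bbbk\cdot X/I\to\Bbbk\cdot X_{\irr}$ of the surjection $\Bbbk\cdot X_{\irr}\into\Bbbk\cdot X\onto\AC$, forcing that surjection to be an isomorphism and $X_{\irr}$ to be a basis. Conversely, if $X_{\irr}$ is a basis, the composite $\pi\co\Bbbk\cdot X\onto\AC\simto\Bbbk\cdot X_{\irr}$ satisfies $\pi\circ\rho_{BrC}=\pi$ for every elementary resolution, so any maximal reduction of a monomial $A$ equals $\pi(A)$ and in particular every ambiguity is resolvable; together with the trivial implications this closes the loop through all four stated conditions.
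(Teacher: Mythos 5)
Your outline is Bergman's original argument, which is also all the paper offers for this statement: Theorem \ref{thm:BDL} is quoted with a citation to \cite[Theorem 1.2]{Bergman} and no proof is given, so there is no separate ``paper's proof'' to diverge from. The architecture you describe --- reduction-unique elements, Noetherian induction along $\le$, splitting the local-confluence check into disjoint/overlap/inclusion cases, and the resulting $\Bbbk$-linear section $r_\infty$ of $\Bbbk\cdot X \onto \AC$ landing in $\Bbbk \cdot X_{\irr}$ --- is the right one.

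However, the bookkeeping lemma as you state it is false, and since you lean on it as ``the congruence clause,'' this is a genuine gap. Reduction-uniqueness of $\a$ does \emph{not} imply reduction-uniqueness of $A\a B$: every generator is irreducible, hence reduction-unique, so your claim would make every monomial reduction-unique with no hypothesis on ambiguities at all, and the theorem would hold vacuously --- contradicted by Example \ref{ex:S3badpresent}. The correct statement (Bergman's Lemma 1.1(iii)) runs the other way: \emph{assuming} $ABC$ is reduction-unique for all monomials $A,B,C$ occurring in $a,b,c$ and their reductions, one concludes $r_\infty(a\,r(b)\,c)=r_\infty(abc)$ for any composite of elementary resolutions $r$. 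In the main induction this hypothesis is available, because every monomial occurring in $L\,\rho_i(EFG)\,M$ is $<A=L(EFG)M$ by the semigroup property and hence reduction-unique by the inductive hypothesis. A related slip: in the overlap/inclusion case you reduce to ``the equality of the two $r_\infty$-values of the minimal ambiguity'' and invoke the inductive hypothesis when $EFG<A$. But $\le$ is not assumed to satisfy $B<ABC$ (the paper warns of exactly this), so $EFG$ may be incomparable to $A$, its reductions need not be known to be reduction-unique, and $r_\infty(\rho_j(EFG))$ may be undefined. What you should use is resolvability as literally defined --- the existence of reductions $r,r'$ with $r(\rho_1(EFG))=r'(\rho_2(EFG))$, a standing hypothesis for every minimal ambiguity --- and then lift this common reduction through $L(\cdot)M$ via the corrected congruence lemma. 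With those two repairs the argument is Bergman's and is complete.
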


\begin{example} \label{ex:S3badpresent} Consider a different presentation for the group algebra of $S_3$, with the same generators $s$ and $t$: the relations are $ss \mapsto 1$, $tt
\mapsto 1$, and $ststst \mapsto 1$. There are too many irreducible monomials (like $ststs$), and there are unresolvable ambiguities (like $s{\bf s}tstst$), so that the presentation is
not Bergman type. The Bergman diamond lemma is still true, but it is not particularly useful. Having a nice presentation is extremely important. \end{example}

\begin{remark} In Bergman's proof of Theorem \ref{thm:BDL}, overlap ambiguities and inclusion ambiguities are treated in precisely the same way. The kind of ambiguity does not affect the
essentially formal argument. \end{remark}

Bergman also introduces a more practical method to check whether ambiguities are resolvable: namely, being resolvable relative to $\le$.

\begin{defn} \label{def:relative} Let $A$ be a word. Define $I_{< A}$ as the $\Bbbk$-linear span of $\{B(W_r - f_r)C\}$ for all relations $r$ and words $B, C \in X$ such
that $B W_r C < A$. This $\Bbbk$-submodule encapsulates all the relations one has already deduced by induction, before examining the monomial $A$. Suppose that $A = B W_r C = D W_s E$ is
an ambiguity, and consider the two elementary resolutions $\rho_{BrC}(A)$ and $\rho_{DsE}(A)$. We say that the ambiguity $(A,W_r,W_s)$ is \emph{resolvable relative to $\le$} if
$\rho_{BrC}(A) - \rho_{DsE}(A) \in I_{<A}$. \end{defn}

To elaborate, suppose we have an ambiguity $A$ as in Definition \ref{def:relative}. To show that it is resolvable using the original definition, one would have to apply relations
$\RC$ \emph{only in the oriented direction} $W_r \mapsto f_r$ until $B f_r C$ and $D f_r E$ became equal. Using relative resolvability, one is permitted to apply the relations in
both directions, so long as the process stays below $A$ in the partial order. The ``restricted ideal'' $I_{<A}$ does not care about the directionality of relations! In practice,
the notion of relative resolvability allows one to take identities which one has already proven (possibly using relations in both directions), and use them to check whether
ambiguities are resolvable.

%========================
\subsection{A crucial warning}
\label{subsec:warning}
%========================

Now we must make the following warning, and make it very loudly!

The naive approach to using the Bergman diamond lemma is to do what we did in Example \ref{ex:S3}: choose for each relation an equation of the form $W_r = f_r$, and check the minimal
ambiguities. In checking the ambiguities, one never needs to use the partial order $\le$, so that it seems irrelevant. However, if one does not find an appropriate partial order $\le$,
then one can not apply the Bergman diamond lemma.

\begin{defn} \label{def:S3Redux} This definition (which we use later in the paper) uses the notation of Example \ref{ex:S3}. We define a partial order $\le_3$ on the set of words in
the alphabet $\{s,t\}$ as follows. \begin{itemize} \item If the word $A$ is shorter than the word $B$, then $A<_3 B$. \item If $A$ and $B$ have the same length, and they express
different elements of $S_3$, then they are incomparable. \item If $A$ and $B$ have the same length, and express the same element of $S_3$, but are not related by a sequence of braid
relations, then they are incomparable. \item Finally, if there is still a chance of them being comparable, then $A<_3 B$ if $A$ occurs after $B$ in lexicographic order. \end{itemize}
\end{defn}
	
Thus, for instance, $tst <_3 sts$. The reader should confirm that $\le_3$ is a partial order, and that it satisfies the hypotheses of Theorem \ref{thm:BDL}.

It is not obvious that a partial order satisfying the hypotheses of Theorem \ref{thm:BDL} exists, even when all the ambiguities are resolvable. A counterexample can be found in \cite[\S
5.4]{Bergman}. In \cite[\S 5.4]{Bergman} it is stated without proof that, if the base ring $\Bbbk$ is a domain, all ambiguities are resolvable, and no infinite chain of nontrivial
elementary resolutions can be applied to any monomial, then there exists a partial order satisfying the hypotheses of Theorem \ref{thm:BDL}. This last condition corresponds to $\le$
satisfying the DCC, and fails for many examples.

\begin{example} Suppose we wished to apply the relation $1 \mapsto ss$ in this direction instead. Then it is impossible for any compatible partial order to satisfy the DCC. \end{example}

%========================
\subsection{Comparison to Gr\"obner bases}
\label{subsec:vsGrobner}
%========================

When a presentation is not Bergman type, one can modify it until it is. There is a formal procedure to do this, see \cite[\S 5.3]{Bergman}. In Example \ref{ex:S3badpresent}, the
ambiguity $s{\bf s}tstst$ is not resolvable because $tstst$ and $s$ are irreducible. We can add a new relation $tstst \mapsto s$, which resolves the ambiguity $s{\bf s}tstst$, but
creates new ambiguities. Then we can force the unresolvable ambiguities to be resolvable by adding new relations, and can repeat. It is unclear if this process will terminate, but when
it does, one has a Bergman type presentation, and the monomials which remain irreducible at the end are a basis. This is effectively Buchberger's algorithm \cite{Buchberger} for finding
a Gr\"obner basis.

The theory of Gr\"obner bases, as initiated by Shirshov \cite{Shirshov}, is very similar to the Bergman diamond lemma. It fixes a set of generators, a partial order on monomials, and an
ideal $I$ (i.e. the ideal generated by the relations $\RC$), and seeks a \emph{Gr\"obner basis}, which is a basis for a generating subspace of $I$ (i.e. a collection of specific
relations) such that all ambiguities are resolvable. Unlike the Bergman diamond lemma, Gr\"obner basis theory cares about the ideal itself, not a particular set of generators for the
ideal (it goes on to find its own generators for the ideal).

We think of changing the relations as violating one goal of the diamond lemma: to take a fixed presentation and confirm a basis with the minimal amount of work. Instead, the procedure
above adds very many new relations, and does not represent a small amount of work. Admittedly, in the age of computers, Buchberger's algorithm can be extremely effective. Moreover, we
may be interested in whether a particular set of monomials, the irreducibles for our chosen set of relations, form a basis. For the Coxeter presentation of $S_n$, this is precisely
asking whether the basis of the group algebra is in bijection with $S_n$, rather than being some smaller set! For this question, adding more relations will change the set of irreducible
monomials, so it will not answer the question directly.

There is a large literature on Gr\"obner bases for various algebras, including the group algebras of Coxeter groups \cite{BokShi} (like the symmetric group) and their braid groups
\cite{BokutBraid}. It is acknowledged that (outside of dihedral type) the Coxeter presentation is not Bergman type, and (like this paper) that vast simplifications arise when one treats
the commuting relations \eqref{eq:sisj} in different fashion to the other relations. Instead of finding Gr\"obner bases via an algorithm, we do something different, modifying the
Bergman diamond lemma itself to let the Coxeter presentation of the symmetric group be Bergman type. Doing this requires an in-depth study of non-reduced expressions, which we have not
seen in the previous works we have explored.

\begin{remark} By no means am I an expert on the literature of Gr\"obner bases or the Bergman diamond lemma, so I would appreciate knowing of other attempts along these lines. \end{remark}

%========================
\subsection{The diamond lemma for linear categories}
\label{subsec:categorydiamond}
%========================

In this section and the next we describe some simple variants on the Bergman diamond lemma, which follow from it directly.

Let $\Bbbk$ be a commutative ring. A \emph{(free) $\Bbbk$-linear category} is a category where the morphism spaces are (free) $\Bbbk$-modules, and where composition is $\Bbbk$-bilinear.
Any $\Bbbk$-algebra can be viewed as a $\Bbbk$-linear category with one object.

Given a $\Bbbk$-linear category $\CC$, one can consider the (possibly infinite) direct sum $\AC = \bigoplus_{n,m \in \Ob(\CC)} \Hom(n,m)$. This is not necessarily an algebra, as when
$\Ob(\CC)$ is infinite it will not have a unit. Instead, it is an \emph{algebroid} or \emph{locally unital algebra}. It has a collection $\{\id_n\}_{n \in \Ob(\CC)}$ of orthogonal
idempotents, whose sum $\sum \id_n$ need not exist in $\AC$ (when the sum is infinite), but functions like the identity element. That is, this (infinite) sum acts via a finite
sum when multiplying by any element of $\AC$, and this finite sum acts by the identity.

Almost every result in ring theory has an immediate and obvious analog for algebroids, though they are not always written down in the literature. For example, it is easy to adapt
Bergman's diamond lemma to $\Bbbk$-linear categories, constructed by generators and relations. Quiver algebras and their quotients are excellent examples to keep in mind, where the generators are thought of as arrows between two objects (i.e. vertices), words are paths, and relations are linear combinations of paths set to zero.

The main difference between algebras and algebroids is that every morphism has a source and a target: one can separate the set $X$ of monomials into sets $X(n,m)$ of monomials which are
morphisms from $n$ to $m$. One may as well assume that the partial order $\le$ is defined separately on each set $X(n,m)$ (or equivalently, that morphisms with different sources or
targets are incomparable).

\begin{example} \label{ex:Scat} Consider the \emph{symmetric category} $\SymCat$. It has one object for each $n \in \NM$, and $\Hom(n,m) = 0$ if $n \ne m$. One has $\End(n) = \Bbbk[S_n]$.
Then $\SymCat$ is a $\Bbbk$-linear category. The presentations of each symmetric group individually give rise to a presentation of $\SymCat$. \end{example}

%========================
\subsection{When the base ring is not central}
\label{subsec:noncentral}
%========================

Often one has an inclusion of rings $\Bbbk \subset \AC$, where $\AC$ is free over $\Bbbk$ as a left module (or a right module), but $\Bbbk$ does not live in the center of $\AC$. The inclusion of the polynomial ring inside the nilHecke algebra (or more generally, a quiver Hecke algebra) is an example of this phenomenon. One can
ask whether the Bergman diamond lemma can be used to prove that $\AC$ is free over $\Bbbk$ as a left module. We assume $\Bbbk$ is commutative, for simplicity.

When presenting $\AC$ over $\Bbbk$ with additional generators $\SC$, monomials in $\SC$ are not sufficient to describe all elements of $\AC$, as we may need to multiply by $\Bbbk$ as
well. That is, we must consider \emph{$\Bbbk$-words}: sequences of the form $AfBgC\cdots$ where $A, B, C, \ldots$ are words in $\SC$, and $f, g, \ldots \in \Bbbk$. The relations in $\RC$
will involve $\Bbbk$-words. We refer to words just in the alphabet $\SC$ as \emph{true words}. More precisely, we are interested in whether $\AC$ is free over $\Bbbk$ as a left module,
with a basis consisting entirely of true words.

If this is to happen, then every $\Bbbk$-word must be writable (after applying relations) as a left $\Bbbk$-linear combination of true words. In particular, for each $s \in \SC$ and $f
\in \Bbbk$, there must be an equality $sf = \sum g_i A_i$ for true words $A_i$ and coefficients $g_i \in \Bbbk$. However, the $\Bbbk$-word $sf$ does not have many subwords, so for it to
be non-irreducible in a Bergman-type presentation, there must be a relation of the form $sf = \sum g_i A_i$, for every $s \in \SC$, and at least for generators $f$ of $\Bbbk$. We will
return to this point shortly.

One approach is to find a presentation for $\Bbbk$ over a subring $\Bbbk'$ which is central in $\AC$ (like the ground field), and to study the ``corresponding presentation" of $\AC$ over
$\Bbbk'$.\footnote{There are many such presentations: each $f, g, \ldots$ in a $\Bbbk$-word in a relation in $\RC$ would have to be expressed using linear combinations of words in the generators
of $\Bbbk$, and this expression is not unique.} This is an effective approach when, for example, $\Bbbk$ is a polynomial ring. This is not the approach we are interested in, as we would
rather treat the ring $\Bbbk$ as a black box.

Our approach is to add to $\SC$ a new generator $m_f$ (multiplication by $f$) for each $f \in \Bbbk$. Yes, we mean {\bf every} element of $\Bbbk$, not just a basis.
Every instance of multiplication by $f$ in a $\Bbbk$-word in a relation $r \in \RC$ we replace by $m_f$.\footnote{Unlike the previous footnote, this operation is unique.}

\begin{example} Relation \eqref{eq:NH} in the nilHecke algebra can be written for general polynomials as \begin{equation} \label{eq:NHbetter} s_i m_f = m_{\phi(f)} s_i + m_{\pa(f)} \id,
\end{equation} where $\phi$ and $\pa$ are certain maps $\Bbbk \to \Bbbk$. Then \eqref{eq:NHbetter} can be thought of as a family of relations, one for each $f \in \Bbbk$. \end{example}

\begin{remark} If the reader prefers, when working over a field, one can also choose a basis of $\Bbbk$ over this field, and only define symbols $m_f$ for $f$ inside this basis. One
obtains an equivalent theory, requiring a similar amount of work to resolve. One philosophical disadvantage is that one loses the wholistic approach to relations like
\eqref{eq:NHbetter}, which are encoded in terms of maps $\phi$ and $\pa$ instead of particular linear combinations for each basis element. \end{remark}

We need to add some new relations in order for this new presentation to have the desired properties. One relation we add is \begin{equation} \label{eq:mult} m_f m_g \mapsto m_{fg}
\end{equation} for each $f, g \in \Bbbk$. Now, so long as there is a relation \begin{equation} \label{eq:sf} s m_f \mapsto \sum m_{g_i} A_i \end{equation} for each $s \in \SC$ and $f
\in \Bbbk$, only words of the form $m_f X$ for $X$ a true word could possibly be irreducible. We assume that such a family of relations exists.

\begin{remark} \label{rmk:partialwithm} One also needs to choose a partial order on monomials compatible with \eqref{eq:sf}. When using a degree lexicographic order or similar order for the generators $\SC$, one can extend this to a partial order by declaring that all symbols $m_f$ are incomparable with each other, and lexicographically smaller than all symbols in $\SC$. This results in a partial order where $B m_f C < A$ implies that $B m_g C < A$ for all $f, g \in \Bbbk$. All polynomials are treated equally. \end{remark}

Now we need to impose a ``relation'' which takes $m_f$ appearing on the left of a word, and replaces it with multiplication by $f$ on the remainder of the word. Let us introduce new
notation for words, writing $\bullet$ for the empty word, and placing $\bullet$ at the beginning of every word. We now introduce the ``left relation'' \begin{equation} \label{eq:bullet}
\bullet m_f \mapsto f \bullet, \end{equation} for each $f \in \Bbbk$, where the left hand side is a word, and the right hand side is a left $\Bbbk$-linear combination. This ``left
relation'' is only permitted to be applied at the beginning of a word. With \eqref{eq:bullet} in place, only true words can be irreducible.

\begin{remark} It is easy to modify the Bergman diamond lemma to allow for left relations. In fact, this is essentially the Bergman diamond lemma for left modules over a ring; we are viewing the ring $\AC$ as a left $\Bbbk$-module. \end{remark}

\begin{remark} The relation $m_f + m_g = m_{f+g}$ is not necessary given the relation \eqref{eq:bullet}; this relationship between elements of $\Bbbk$ is supposed to be handled by
taking $\Bbbk$-linear combinations of words. It is also not desireable to have $m_f + m_g = m_{f+g}$ as a relation, as it would make the existence of a compatible partial order
$\le$ effectively impossible. However, one is permitted to use this relation as desired when checking the ambiguities, because one can check ambiguities relative to $\le$, see
Definition \ref{def:relative} and following. When resolving an ambiguity $A$, if $B m_f C < A$ then (for reasonable partial orders, see Remark \ref{rmk:partialwithm}) also $B m_g C < A$ and $B (m_{f+g}) C < A$, thus
\[ B m_f C + B m_g C - B m_{f+g} C \in I_{< A}. \]
\end{remark}

Now (assuming the existence of a suitable partial order) we can apply the original Bergman diamond lemma to this modified presentation of $\AC$. Adding the relations \eqref{eq:bullet}
and \eqref{eq:mult} creates (at least) two new overlap ambiguities. One is $(m_f m_g m_h, m_f {\bf m_g}, {\bf m_g} m_h)$, which is always resolvable by associativity in $\Bbbk$. The
other is $(\bullet m_f m_g, \bullet {\bf m_f}, {\bf m_f} m_g)$, which is also always resolvable. Note that the resolvability of the family of ambiguities $(s m_f m_g, s {\bf m_f}, {\bf
m_f} m_g)$ is an interesting and nontrivial statement about the family of relations \eqref{eq:sf}.

%========================
\section{The problem with the Coxeter presentation}
\label{sec:coxeterproblems}
%========================

%========================
\subsection{Depiction of the problem}
\label{subsec:coxeterproblem}
%========================

The Coxeter presentation of the symmetric group $S_n$ was given in the introduction as \eqref{subeq:coxeter}. We have already seen in Example \ref{ex:S3} and \S\ref{subsec:warning} that
this presentation is Bergman-type when $n=3$. However, it is not Bergman-type for $n \ge 4$.

Let $s = s_1$, $t = s_2$, and $u = s_3$ inside $S_4$. The quadratic relations must have the form $ss \mapsto 1$, $tt \mapsto 1$, $uu \mapsto 1$, as if any of these elementary
resolutions goes the other way, no compatible partial order would satisfy the DCC. Suppose, for example, that the remaining relations are $sts \mapsto tst$, $tut \mapsto utu$, and $su
\mapsto us$. Then $(stsu, st{\bf s}, {\bf s}u)$ is an unresolvable overlap ambiguity, as both $tstu$ and $stus$ are irreducible. The longest element of $S_4$ has multiple irreducible
reduced expressions, namely $utustu$ and $tustus$ and $ustust$. The reader can confirm that no other way to ``orient'' the relations (i.e. replacing $sts\mapsto tst$ with $tst \mapsto
sts$) will have resolvable ambiguities either.

Let us rephrase the problem. For each element $w \in S_n$, let $\G_w$ denote its \emph{reduced expression graph}. This is the graph whose vertices are reduced expressions for $w$, and
the edges are applications of a single braid relation, either \eqref{eq:sisj} or \eqref{eq:sisjsi}. The edges can be labeled by the pair $\{i,j\}$ of indices which are involved in the
braid relation; there are ${n-1}\choose{2}$ possible labelings. If one chooses a partial order compatible with the relations, then for each pair $\{i,j\}$ with $j \ne i \pm 1$, either
$s_i s_j$ is bigger than $s_j s_i$, or the other way around. This gives an orientation on the reduced expression graph of $s_i s_j$ (the arrow goes from the larger word to the smaller one). If the partial order is a semigroup partial order,
then this same orientation applies to every edge labeled $\{i,j\}$ in any reduced expression graph. Similar considerations can be made for pairs $\{i,j\}$ with $j = i \pm 1$. There are
$2^{{n-1}\choose{2}}$ orientations on reduced expression graphs, obtained by choosing an orientation for each pair $\{i,j\}$.

\begin{remark} The reader who has never encountered reduced expression graphs before will be greatly served by drawing the reduced expression graph of the longest element of $S_4$. There
are 16 reduced expressions; pick one and then start applying braid relations to find the rest. The answer can be found, for example, in \cite[\S 3.1]{EThick}. \end{remark}

A sink in the oriented reduced expression graph is an irreducible monomial. Thus, if the presentation is Bergman-type, then there must be an orientation where each reduced expression
graph has a unique sink. As noted, the oriented reduced expression graph of the longest element of $S_4$ always has multiple sinks.

%========================
\subsection{Avoidance of the problem}
\label{subsec:coxeteravoid}
%========================

Now we point out the real mathematical meat of this paper: the only problem is choosing an orientation for the relation \eqref{eq:sisj}, i.e. for pairs $\{i,j\}$ with $j \ne i \pm 1$.

Let $\bar{\G}_w$ denote the quotient of $\G_w$ by all edges with a label $\{i,j\}$ for $j \ne i \pm 1$. That is, a vertex of $\bar{\G}_w$ is an equivalence class $[\un{w}]$ of
reduced expressions of $w$, where reduced expressions $\un{w}$ and $\un{w}'$ are \emph{equivalent} if they differ only by commuting relations like $su = us$. There is an edge from
one vertex to another, labeled by $\{i,j\}$ with $j = i \pm 1$, if there is a representative of each of the two equivalence classes which is related by \eqref{eq:sisjsi}.

The following is a theorem of Manin and Schechtmann, and (in this particular form) is implicit but not explicitly stated in \cite{ManSch}. For this reformulation, see \cite[\S 1.3, \S
3.1]{EThick}.

\begin{prop} \label{prop:MS} Place the antilexicographic orientation on $\bar{\G}_w$, where each edge is oriented $s_i s_{i+1} s_i \mapsto s_{i+1} s_i s_{i+1}$. Then $\bar{\G}_w$ has a
unique source and a unique sink. In fact, $\bar{\G}_w$ is a \emph{graded graph}: every vertex $[\un{w}]$ has a height $h([\un{w}]) \in \NM$ and oriented edges increase the height by one.
\end{prop}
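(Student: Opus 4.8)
The plan is to identify $\bar{\G}_w$ with (a suitable interval in) the second higher Bruhat order of Manin--Schechtmann, after which all three assertions become transparent. First I would set up the dictionary. To a reduced expression $\un{w} = s_{i_1}\cdots s_{i_d}$ one attaches the ordered list of inversions $\beta_k := s_{i_1}\cdots s_{i_{k-1}}(\alpha_{i_k})$, $1 \le k \le d$; this is a \emph{reflection ordering} of the inversion set $\mathrm{Inv}(w)$ of positive roots, and conversely every reflection ordering arises this way. In this language a commutation move $s_is_j = s_js_i$ ($|i-j| \ge 2$) transposes two adjacent, \emph{independent} inversions (ones whose sum is not a root), while a braid move $s_is_{i+1}s_i = s_{i+1}s_is_{i+1}$ reverses a run of three consecutive inversions forming a \emph{triangle} $\{\alpha,\beta,\alpha+\beta\} \subseteq \mathrm{Inv}(w)$ (the middle of the three always being $\alpha+\beta$). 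Hence a vertex $[\un{w}]$ of $\bar{\G}_w$ is determined by recording, for each triangle $T$ of roots inside $\mathrm{Inv}(w)$, which of its two admissible orderings appears in $[\un{w}]$; let $U([\un{w}])$ be the set of triangles appearing in the ordering produced by the move $s_is_{i+1}s_i \mapsto s_{i+1}s_is_{i+1}$ of the statement. The assignment $[\un{w}] \mapsto U([\un{w}])$ is injective, with image the poset of \emph{admissible} triangle-sets (all of $B(n,2)$ when $w = w_0 \in S_n$, an interval therein for general $w$); this is precisely the reformulation of Manin--Schechtmann recorded in \cite[\S 1.3, \S 3.1]{EThick}, cf.\ \cite{ManSch}.

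Second, one braid move changes $U$ by a single triangle, and with the orientation of the Proposition an oriented edge $[\un{w}] \to [\un{w}']$ is exactly one with $U([\un{w}']) = U([\un{w}]) \sqcup \{T\}$ for some triangle $T$. Setting
\[ h\big([\un{w}]\big) \; := \; \# \, U\big([\un{w}]\big) \; \in \; \NM , \]
every oriented edge raises $h$ by exactly $1$. This already shows that $\bar{\G}_w$ is a graded graph with height function $h$, that it has no oriented cycle, and that it is finite (as $w$ has finitely many reduced expressions).

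Finally, uniqueness of the source and the sink. Write $\mathcal{T}_w$ for the set of all triangles inside $\mathrm{Inv}(w)$; since the poset of admissible triangle-sets contains $\emptyset$ and its top element $\mathcal{T}_w$, there are classes $c_{\min}, c_{\max}$ with $U(c_{\min}) = \emptyset$ and $U(c_{\max}) = \mathcal{T}_w$. The class $c_{\min}$ has no incoming edge, so it is a source; dually $c_{\max}$ is a sink. For uniqueness it suffices to prove that every class with $U \ne \emptyset$ has an incoming edge and, dually, every class with $U \ne \mathcal{T}_w$ has an outgoing edge: given this, starting from any vertex one can strictly decrease $h$ (resp.\ strictly increase it) along edges until reaching $c_{\min}$ (resp.\ $c_{\max}$), so these are the only source and sink. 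Now, a class $[\un{w}]$ has an incoming edge if and only if some triangle $T \in U([\un{w}])$ is \emph{flippable} -- the three inversions comprising $T$ can be made consecutive after commutation moves, so that the braid relation $s_{i+1}s_is_{i+1} \mapsto s_is_{i+1}s_i$ may be applied there. The crux of the whole proof, and the step I expect to be the real obstacle, is the corresponding flippability lemma: \emph{in any nonempty admissible set $U$ of triangles, some triangle can be deleted so that the result is still admissible.} Manin and Schechtmann prove this by a careful analysis of \emph{packets} -- the triangles supported on a common $4$-element subset of strands, whose intersections with $U$ are forced to be beginning- or ending-segments in the natural order -- showing that any triangle of $U$ that is maximal for the partial order generated by these packet constraints is flippable. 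The passage from $w_0$ to a general $w$ requires checking the dictionary and this lemma in the relative setting, restricting attention to packets whose triangles all lie in $\mathrm{Inv}(w)$; this is carried out in \cite[\S 3.1]{EThick}.
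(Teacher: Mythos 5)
Your setup is the same as the paper's: your $U([\un{w}])$ is exactly the paper's higher inversion set $J(\un{w})$, your height $h = \#U$ is the paper's height function, injectivity of $[\un{w}] \mapsto J(\un{w})$ is Theorem \ref{thm:MSagain}(2), and "one braid move adds one triangle" is Theorem \ref{thm:MSagain}(1). Up to that point the argument is fine and matches the paper, and you correctly isolate the crux: every class with $U \ne \mathcal{T}_w$ has an outgoing edge (equivalently, some unflipped full triple is flippable after commutations), together with its backwards analogue.

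The gap is that you do not prove this crux, and the sources you lean on do not supply it in the generality you need. As the paper itself points out, \cite{ManSch} proves the flippability statement only for the longest element of $S_n$; the citation of \cite[\S 1.3, \S 3.1]{EThick} is for the \emph{reformulation} of the statement (reduced expression graphs, packets, higher inversion sets), not for a proof of flippability for an arbitrary $w$. This general-$w$ flippability is precisely Proposition \ref{prop:MS2}, and it is the only substantive thing the paper proves in this section: an induction on the minimal length of a lexicographically-ordered full packet inside the crossing sequence, with a case analysis on the pair $(x|y)$ intervening between $(i|j)$ and $(i|k)$ (or $(i|k)$ and $(j|k)$), producing either a commutation that shortens the span, a contradiction with Theorem \ref{thm:MS}(2), or a shorter full lexicographic triple. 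Relatedly, your appeal to "the poset of admissible triangle-sets contains $\emptyset$ and its top element $\mathcal{T}_w$" to produce $c_{\max}$ assumes that $\mathcal{T}_w$ is actually realized by a reduced expression of $w$; in the paper this is a \emph{consequence} of Proposition \ref{prop:MS2} together with finiteness (keep flipping until no full triple remains unflipped, as in Corollary \ref{cor:MS}), so invoking a prior description of the image of $U$ as an interval assumes part of what is to be proved. To complete your write-up you would need to prove the flippability lemma for general $w$ (either by the paper's induction or by adapting the Manin--Schechtmann argument in the relative setting), rather than cite it.
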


We recall Manin-Schechtmann theory and prove this proposition in \S\ref{subsec:inversions}, see Corollary \ref{cor:MS}.

\begin{remark} When $w$ is the longest element of $S_n$, $\bar{\G}_w$ is the Hasse diagram of the second \emph{higher Bruhat order}. In \cite{ManSch} they define a family of higher
Bruhat orders, starting with the familiar Bruhat order, then $\bar{\G}_w$, and beyond. Each higher Bruhat order is a graded graph with unique source and sink, whose vertices are
equivalence classes of paths from source to sink in the previous Bruhat order. \end{remark}

In particular, this indicates that, if there were some way to apply a modified Bergman diamond lemma where one was allowed to apply the relation $s_i s_j = s_j s_i$
in either direction, then there might be a chance that the Coxeter presentation is ``modified Bergman-type." This is what we accomplish in this paper.

%========================
\subsection{Manin-Schechtmann theory: triples and inversions}
\label{subsec:inversions}
%========================

The results in this section are due to Manin-Schechtmann. More precisely, they all effectively come from the $k=1$ and $k=2$ cases of \cite[\S 2, Theorem 3]{ManSch} and \cite[\S 2, Lemma 8]{ManSch}, though this should not be obvious to the reader at first glance. Because we use crossing diagrams to represent permutations, we call indices $1 \le i \le n$ by the name
\emph{strands}.

\begin{defn} Given a permutation $w \in S_n$, and \emph{inversion of $w$} is a pair of strands $(i|j)$ with $w(j) > w(i)$. We always write pairs $(i|j)$ in order, with $i < j$. We write $I(w)$ for the \emph{inversion set} of $w$, the set
of all its inversions. \end{defn}

Note that the length of $w$ is the size of the inversion set. For example, the inversion set of the longest element includes all ${n}\choose{2}$ pairs $(i|j)$.

\begin{defn} To any given crossing in an expression (not necessarily reduced), we associate the pair $(i|j)$ of strands which is crossed. The following example, where each crossing is
labeled with its pair, should make this definition clear. Note that we read a diagram from bottom to top. \end{defn}
\begin{equation} \label{inversionexample} {
\labellist
\tiny\hair 2pt
 \pinlabel {\small $1$} [ ] at 27 5
 \pinlabel {\small $2$} [ ] at 36 5
 \pinlabel {\small $3$} [ ] at 45 5
 \pinlabel {$(1|2)$} [ ] at 18 17
 \pinlabel {$(1|3)$} [ ] at 18 29
 \pinlabel {$(2|3)$} [ ] at 18 41
 \pinlabel {$(1|2)$} [ ] at 18 53
 \pinlabel {$(1|3)$} [ ] at 18 65
\endlabellist
\centering
\ig{1.3}{crossinglabels}
}\end{equation}

The string diagram from \eqref{inversionexample} corresponds to the expression $s_1 s_2 s_1 s_2 s_1$, or $ststs$ where $s = s_1$ and $t = s_2$ as before. Because diagrams are read from bottom to top, and permutations act from right to left, it is the leftmost $s_1$ which corresponds to the top crossing labeled $(1|3)$, and the rightmost $s_1$ which corresponds to the bottom crossing labeled $(1|2)$.

Let $\un{w}$ be a reduced expression for an element $w \in S_n$. Since the length of $\un{w}$ agrees with the size of $I(w)$, and since each crossing adds at most one inversion, it must be the case that every pair $(i|j) \in I(w)$ labels precisely one crossing in $\un{w}$.

\begin{defn} Let $\un{w}$ be a reduced expression for $w \in S_n$. To this reduced expression we associate a total order $\prec$ on $I(w)$ where $(i|j) \prec (k|l)$ if the crossing labeled $(i|j)$ appears below $(k|l)$. We call this the \emph{crossing sequence} of the reduced expression. This map from reduced expressions to crossing sequences is injective. \end{defn}
	
In contrast to the notation $(i|j) \prec (k,l)$, which depends on a choice of reduced expression $\un{w}$, we will continue to use $(i|j) < (k|l)$ to indicate the usual lexicographic order. Manin-Schechtmann theory comes in when one compares the two orders $\prec$ and $<$.

\begin{example} In the reduced expression $sts$ in $S_3$, the rightmost $s$ crosses $(1|2)$, then $t$ crosses $(1|3)$, and finally $s$ crosses $(2|3)$. This yields the
lexicographic order on $I(sts)$: $((1|2) \prec (1|3) \prec (2|3))$. Meanwhile, the reduced expression $tst$ yields the antilexicographic order $((2|3) \prec (1|3) \prec (1|2))$.
\[{
\labellist
\small\hair 2pt
 \pinlabel {$(1|2)$} [ ] at 7 12
 \pinlabel {$(1|3)$} [ ] at 7 25
 \pinlabel {$(2|3)$} [ ] at 7 38
 \pinlabel {$(2|3)$} [ ] at 61 12
 \pinlabel {$(1|3)$} [ ] at 61 25
 \pinlabel {$(1|2)$} [ ] at 61 38
\endlabellist
\centering
\ig{1.3}{stststlabels}
}\]
\end{example}

Not every set of pairs of strands is the inversion set of a permutation. For example, one can not cross $(1|3)$ before getting the strand $2$ out of the way somehow. Moreover, not every
total order on an inversion set corresponds to a reduced expression: there are six orders on $\{(1|2), (1|3), (2|3)\}$, but only two correspond to reduced expressions.

\begin{defn} A \emph{triple} of strands is $(i|j|k)$ with $1 \le i<j<k \le n$. The \emph{packet} $P(i|j|k)$ of a triple is the set $((i|j) < (i|k) < (j|k))$, given its lexicographic
ordering. Let $I$ be a set of pairs of strands $\{(i|j)\}$ (for example, $I$ might be an inversion set). We call a triple $(i|j|k)$ a \emph{prefix triple} for $I$ if $P(i|j|k) \cap I$ is a prefix of $P(i|j|k)$. That is, the intersection is either the empty set, $\{(i|j)\}$, $\{(i|j),(i|k)\}$, or all of $P(i|j|k)$. We call $(i|j|k)$ a \emph{suffix triple} for $I$ if $P(i|j|k) \cap I$ is a suffix of $P(i|j|k)$. We call $(i|j|k)$ a \emph{full triple} for $I$ if $P(i|j|k) \subset I$. In particular, a full triple is both prefix and suffix. When $I = I(w)$, we say instead that $(i|j|k)$ is a \emph{prefix triple for $w$}, etcetera. \end{defn}

\begin{example} For the set $I = \{(1|3)\}$, the triple $(1|2|3)$ is neither prefix nor suffix. \end{example}
	
\begin{example} For $i<j<k$, the triple $(i|j|k)$ is full for $w$ if and only if $w(k)<w(j)<w(i)$. \end{example}
	
The key point to make about triples is that, whenever a subword $s_i s_{i+1} s_i$ or $s_{i+1} s_i s_{i+1}$ appears inside an expression, it is because the packet of some triple
$(i|j|k)$ appears in consecutive order inside the crossing sequence of the expression. Applying the braid relation $s_i s_{i+1} s_i \mapsto s_{i+1} s_i s_{i+1}$ to a reduced expression
will flip the order induced on this packet from lexicographic to antilexicographic. On the other hand, applying a braid relation $s_i s_j = s_j s_i$ for $j \ne i+1$ will flip the order
on two adjacent pairs $(i|j)$ and $(k|l)$ with $\{i,j\} \cap \{k,l\} = \emptyset$. In particular, $(i|j)$ and $(k|l)$ appear in no common packets, so that swapping their order is not
detected by any triple.

We now state a number of theorems, and we strongly encourage the reader to test out these theorems on (all) the reduced expressions of the longest element of $S_4$.

\begin{thm} \label{thm:MS} \begin{enumerate}
\item A set $I$ of pairs $\{(i|j)\}$ is the inversion set $I(w)$ of some permutation $w \in S_n$ if and only if each triple $(i|j|k)$ is either a prefix or a suffix triple for $I$.
\item A total order $\prec$ on the inversion set $I(w)$ corresponds to some reduced expression $\un{w}$ for $w$ if and only if,
for each triple $(i|j|k)$: \begin{itemize} \item if $(i|j|k)$ is prefix for $w$, but not full, then the ordering induced on $I(w) \cap P(i|j|k)$ by $\prec$ is
the lexicographic ordering; \item if $(i|j|k)$ is suffix for $w$, but not full, then the ordering induced on $I(w) \cap P(i|j|k)$ by $\prec$ is the
antlexicographic ordering; \item if $(i|j|k)$ is full for $w$, then the ordering induced on $P(i|j|k) \subset I(w)$ by $\prec$
is either the lexicographic or the antilexicographic ordering. \end{itemize} \end{enumerate}
\end{thm}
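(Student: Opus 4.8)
\emph{Proof proposal.} The plan is to prove part (1) first, since part (2) reduces to it. For part (1), the forward implication is a finite verification: for a fixed triple $(i|j|k)$, the intersection $I(w)\cap P(i|j|k)$ depends only on the order type of $(w(i),w(j),w(k))$, and running through the six possibilities one finds $I(w)\cap P(i|j|k)$ is always one of $\emptyset$, $\{(i|j)\}$, $\{(i|j),(i|k)\}$, $\{(j|k)\}$, $\{(i|k),(j|k)\}$, or all of $P(i|j|k)$ --- in particular never $\{(i|k)\}$ and never $\{(i|j),(j|k)\}$ --- so the triple is always a prefix or a suffix triple. For the converse, given $I$ in which every triple is prefix or suffix, I would recover $w$ by building a tournament on strands: for $a<b$ put $a\lessdot b$ exactly when $(a|b)\notin I$. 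A directed $3$-cycle on strands $i<j<k$ would force $I\cap P(i|j|k)$ to equal either $\{(i|k)\}$ or $\{(i|j),(j|k)\}$, the two forbidden patterns; hence $\lessdot$ has no $3$-cycles, so (being a tournament) it is a total order, hence the order induced by a unique permutation $w$, which by construction satisfies $I(w)=I$.

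For part (2), the crucial reformulation is: a total order $\prec$ on $I(w)$ is the crossing sequence of a reduced expression for $w$ if and only if every $\prec$-initial segment $J\subseteq I(w)$ is itself an inversion set. One direction is immediate from the description of crossing sequences: in a reduced word $s_{i_1}\cdots s_{i_\ell}$ the first $m$ crossings are exactly the crossings of the reduced word $s_{i_1}\cdots s_{i_m}$, whose pair-set is $I(s_{i_1}\cdots s_{i_m})$. For the other direction I would induct on $\ell=|I(w)|$: letting $(a|b)$ be the $\prec$-maximal pair, the segment $J=I(w)\setminus\{(a|b)\}$ is an inversion set $I(w')$, all of whose $\prec|_J$-initial segments are again inversion sets, so by induction $\prec|_J$ is realized by a reduced word $\un{w'}$ of $w'$. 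The one real point is that strands $a$ and $b$ must be adjacent along the top of $\un{w'}$: if some strand $c$ ended strictly between them in $w'$, then according to whether $c<a$, $a<c<b$, or $c>b$, one of the triples $(c|a|b)$, $(a|c|b)$, $(a|b|c)$ would have $I(w)$-intersection equal to a forbidden pattern, contradicting part (1) applied to $w$. Granting adjacency, one appends the unique simple reflection crossing $a$ and $b$ to $\un{w'}$ and obtains a reduced word for $w$ (its pair-set being $I(w')\cup\{(a|b)\}=I(w)$) whose crossing sequence is $\prec$.

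Finally I would translate ``every $\prec$-initial segment is an inversion set'' into the three bulleted conditions of the theorem using part (1). By part (1) the condition says: for every triple $(i|j|k)$ and every $\prec$-initial segment $J$, the set $J\cap P(i|j|k)$ is neither $\{(i|k)\}$ nor $\{(i|j),(j|k)\}$. A short case analysis --- using that initial segments are downward-closed under $\prec$, and that $I(w)\cap P(i|j|k)$ is already a prefix or a suffix --- shows this is equivalent to: a prefix-but-not-full triple induces the lexicographic order on $I(w)\cap P(i|j|k)$ (otherwise the initial segment stopping just after $(i|k)$ meets the packet in $\{(i|k)\}$), a suffix-but-not-full triple induces the antilexicographic order, and a full triple induces one of these two total orders on $P(i|j|k)$ (each of the remaining four orders on the packet produces a forbidden initial-segment intersection). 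I expect the main obstacle to be bookkeeping rather than any single conceptual step: in both the adjacency lemma and this last translation one must simultaneously keep track of the global order $\prec$, its restriction to a single packet, and the downward-closed initial segments, and the forbidden-pattern argument must be applied to the right triple in each case.
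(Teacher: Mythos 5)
The paper itself offers no proof of Theorem \ref{thm:MS}: it is attributed to the $k=1,2$ cases of Manin--Schechtmann (\S 2, Theorem 3 and Lemma 8 of their paper), so there is no internal argument to compare against, and your proposal supplies a correct, self-contained one. Every step I tested goes through: in part (1) the six order types of $(w(i),w(j),w(k))$ never produce $\{(i|k)\}$ or $\{(i|j),(j|k)\}$, and a $3$-cycle in your tournament produces exactly one of those two patterns, so acyclicity yields the permutation; in part (2) the reformulation ``$\prec$ is a crossing sequence iff every $\prec$-initial segment is an inversion set'' is right, the adjacency lemma is the real content and your forbidden-pattern argument in the cases $c<a$, $a<c<b$, $b<c$ does produce an excluded packet intersection for $I(w)$, and the final translation into the three bullets (downward-closed subsets of the induced order on $I(w)\cap P(i|j|k)$ avoid the forbidden patterns exactly for the lexicographic/antilexicographic restrictions) checks out, though you only sketch it. Two cosmetic points: the paper's crossing sequence is read from the bottom of the diagram, i.e.\ from the right end of the word, so your ``first $m$ letters'' should be the last $m$; and the paper's printed inversion condition $w(j)>w(i)$ is a typo for the standard convention you use. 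In a written version, spell out the adjacency lemma together with the observation that a word crossing each pair of strands at most once is automatically reduced and represents the permutation with that inversion set; with that included, your argument would make \S\ref{subsec:inversions} independent of the Manin--Schechtmann reference, which is what your more elementary route buys.
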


\begin{defn} For a reduced expression $\un{w}$ of $w$, define its \emph{higher inversion set} $J(\un{w})$ to be the set of full triples $(i|j|k)$ for $w$ such that $\prec$ induces the antilexicographic order on $P(i|j|k)$. \end{defn}

\begin{thm} \label{thm:MSagain} \begin{enumerate} \item If $\un{w} \mapsto \un{w}'$ are reduced expressions related by a single braid relation $s_i s_{i+1} s_i \mapsto s_{i+1} s_i s_{i+1}$, then $J(\un{w})
\subset J(\un{w}')$, and the only triple in $J(\un{w}')$ not in $J(\un{w})$ is the triple $(i|j|k)$ of strands involved in this braid relation. Consequently, applying a braid relation
is sometimes called a \emph{packet flip}, or \emph{flipping the packet $P(i|j|k)$}. \item For reduced expressions $\un{w}$ and $\un{w}'$ of $w \in S_n$, one has $J(\un{w}) = J(\un{w}')$
if and only if $\un{w}$ and $\un{w}'$ are equivalent. \end{enumerate}
\end{thm}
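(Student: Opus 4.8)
The plan is to reduce both parts of Theorem~\ref{thm:MSagain} to the combinatorial description of reduced expressions provided by Theorem~\ref{thm:MS}(2), exploiting the observation (made just before the statement of Theorem~\ref{thm:MS}) that a braid relation $s_i s_{i+1} s_i \mapsto s_{i+1} s_i s_{i+1}$ corresponds to flipping the order induced on the packet $P(i|j|k)$ of a single triple from lexicographic to antilexicographic, while a commuting relation $s_i s_j = s_j s_i$ (with $j \ne i+1$) swaps two pairs that share no common packet and hence changes the induced order on no packet $P(i|j|k)$ at all.

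For part (1), suppose $\un{w} \mapsto \un{w}'$ is a single braid move flipping the triple $(i|j|k)$. First I would argue that $(i|j|k)$ must be a \emph{full} triple for $w$: the three pairs of $P(i|j|k)$ appear consecutively in the crossing sequence of $\un{w}$, so all three are in $I(w)$, i.e. $P(i|j|k) \subset I(w)$. Before the move, $\prec$ induces the lexicographic order on $P(i|j|k)$ (the subword is $s_i s_{i+1} s_i$, reading bottom to top), so $(i|j|k) \notin J(\un{w})$; after the move it induces the antilexicographic order, so $(i|j|k) \in J(\un{w}')$. For every \emph{other} full triple $(i'|j'|k')$, I must check that the order induced on $P(i'|j'|k')$ is unchanged. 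The braid move only permutes three consecutive entries of the crossing sequence among themselves (the three pairs of $P(i|j|k)$), so the relative order of any two pairs not both inside $P(i|j|k)$ is preserved; and since distinct triples have distinct packets, $P(i'|j'|k')$ cannot be contained in $P(i|j|k)$, so it contains at most one pair — actually at most two pairs — hmm, let me be careful: two distinct triples can share at most one pair, so $|P(i'|j'|k') \cap P(i|j|k)| \le 1$, whence at most one entry of $P(i'|j'|k')$ lies among the three permuted positions and its order relative to the other two entries of its own packet is untouched. Hence $J(\un{w}') = J(\un{w}) \sqcup \{(i|j|k)\}$, as claimed. The terminology ``packet flip'' then records exactly this.

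For part (2), the direction ``equivalent in $\bar{\G}_w$ $\Rightarrow$ $J(\un{w}) = J(\un{w}')$'' is immediate from the preceding paragraph applied in the commuting case: each commuting move changes the order induced on no packet of any triple, so in particular on no full triple, hence leaves $J$ unchanged; chaining such moves finishes it. For the converse, suppose $J(\un{w}) = J(\un{w}')$. By Theorem~\ref{thm:MS}(2) the data of a reduced expression for $w$ is equivalent to a choice, for each full triple, of lexicographic or antilexicographic order on its packet (the prefix-but-not-full and suffix-but-not-full triples having their induced orders forced by $w$ alone) — \emph{together with} the global consistency that $\prec$ is a genuine total order refining all these local choices. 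The set $J$ records precisely which full triples carry the antilexicographic order, so $\un{w}$ and $\un{w}'$ induce the \emph{same} order on every packet $P(i|j|k)$. It remains to show that two reduced expressions inducing the same order on every packet differ by commuting moves only; equivalently, that $\bar{\G}_w$ has no multiple vertices with the same $J$. Here I would invoke Proposition~\ref{prop:MS} (to be proved in \S\ref{subsec:inversions}): $\bar{\G}_w$ is a graded graph, its edges are exactly the braid (packet-flip) moves modulo commutation, and by part~(1) traversing an edge strictly enlarges $J$ by one element. Therefore $J$ is a strictly monotone function along directed paths in the acyclic graded graph $\bar{\G}_w$, so it is injective on vertices, giving $[\un{w}] = [\un{w}']$ in $\bar{\G}_w$.

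The main obstacle I anticipate is the converse of part~(2): one really needs that the partial record $J$ (or equivalently the per-packet orders) \emph{determines} the equivalence class, and the clean way to see this is the gradedness/acyclicity of $\bar{\G}_w$ from Proposition~\ref{prop:MS}, so the argument is somewhat circular unless Proposition~\ref{prop:MS} is established first by an independent route (e.g. directly via Theorem~\ref{thm:MS}(2) and an inversion-counting height function $h([\un{w}]) = |J(\un{w})|$). I would therefore structure \S\ref{subsec:inversions} so that Corollary~\ref{cor:MS}/Proposition~\ref{prop:MS} is proved from Theorem~\ref{thm:MS} before part~(2) of Theorem~\ref{thm:MSagain}, using $h = |J|$ as the grading, after which the injectivity of $J$ on vertices of $\bar{\G}_w$ drops out. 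The rest is bookkeeping about which entries of a crossing sequence a given elementary move disturbs.
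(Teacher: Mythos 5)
Parts (1) and the forward direction of (2) in your proposal are fine: they are exactly the ``key point'' observations the paper records just before Theorem \ref{thm:MS} (a braid move permutes three consecutive entries of the crossing sequence, which form one packet in lexicographic order by reducedness, and two distinct triples share at most one pair; a commutation move swaps two pairs lying in no common packet). Be aware, though, that the paper gives no proof of Theorem \ref{thm:MSagain} at all --- it is cited from \cite{ManSch} --- so the only substantive issue is whether your argument for the converse of (2) stands on its own.

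It does not: there is a genuine gap. From ``traversing an edge of $\bar{\G}_w$ enlarges $J$ by one element'' you conclude that $J$ ``is injective on vertices,'' but strict monotonicity along directed paths only separates vertices joined by a directed path; it says nothing about two incomparable commutation classes (e.g.\ classes at equal height on the two sides of $\bar{\G}_{w_0}$ for $w_0 \in S_4$, which is an octagon), and ruling out equal $J$ for such pairs is precisely the content of the statement being proved. Moreover, the circularity you flag is not repaired by your proposed reordering: Proposition \ref{prop:MS2} only shows that any sink of $\bar{\G}_w$ has $J$ equal to the set of all full triples, so the \emph{uniqueness} of the sink in Proposition \ref{prop:MS}/Corollary \ref{cor:MS} is itself a special case of (2) (only one class with maximal $J$); in the paper's logic that uniqueness rests on the cited Theorem \ref{thm:MSagain}, not the other way around. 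A self-contained repair is available using the paper's own later technique: for reduced expressions, $J(\un{w})=J(\un{w}')$ is equivalent to $\un{w}_{ijk}=\un{w}'_{ijk}$ for every triple (a non-full triple restricts to an element of $S_3$ with a unique reduced expression, a full triple to $sts$ or $tst$ according to membership in $J$), and the first-letter extraction argument of \S\ref{subsec:partial} (the proposition that $\un{w}\le\un{w}'\le\un{w}$ forces equivalence under commutations, proved by moving the first letter of $\un{w}'$ to the front of $\un{w}$ and inducting on length) then yields the converse of (2) directly, with no appeal to Proposition \ref{prop:MS}.
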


For part (b) above and the results below, we recall that two reduced expressions are \emph{equivalent} if they are related by commuting braid relations $s_i s_j = s_j s_i$ for $j \ne i \pm 1$, and that $\bar{\G}_w$ is the graph of equivalence classes of reduced expressions.

\begin{remark} Manin and Schechtmann also have a result indicating which sets of triples $J$ are the higher inversion sets of a reduced expression, by examining packets of \emph{quadruples}, analogous to part (1) of Theorem \ref{thm:MS}. This question relates to the third higher Bruhat order. \end{remark}

Finally we have the result which proves Proposition \ref{prop:MS}.

\begin{prop} \label{prop:MS2} Suppose that $\un{w}$ is a reduced expression which has at least one full triple not in $J(\un{w})$. Then there is some full triple $(i|j|k) \notin
J(\un{w})$ whose packet is \emph{flippable}, i.e. there exists a reduced expression $\un{w}'$ equivalent to $\un{w}$ in $\bar{\G}_w$, and a braid relation $s_\ell s_{\ell+1} s_\ell
\mapsto s_{\ell+1} s_\ell s_{\ell+1}$ which can be applied to $\un{w}'$ which adds $(i|j|k)$ to the higher inversion set. \end{prop}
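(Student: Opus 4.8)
The plan is to argue by "greedy descent" on the complement of $J(\un{w})$ inside the set of full triples, using Theorem \ref{thm:MS}(2) to certify that a candidate packet flip actually produces a reduced expression. Write $F(w)$ for the set of all full triples for $w$, so by hypothesis $F(w) \setminus J(\un{w}) \ne \emptyset$. A full triple $(i|j|k) \notin J(\un{w})$ has $\prec$ inducing the \emph{lexicographic} order on $P(i|j|k)$. To "flip" it we need, by Theorem \ref{thm:MSagain}(1), to first move to an equivalent $\un{w}'$ (same $J$, i.e. same class in $\bar\G_w$) in which the three pairs $(i|j), (i|k), (j|k)$ occur \emph{consecutively} in the crossing sequence, in lexicographic order; the braid move $s_\ell s_{\ell+1} s_\ell \mapsto s_{\ell+1} s_\ell s_{\ell+1}$ then applies (here $\ell, \ell+1$ are the two strand-positions currently occupied by strands $i,j,k$ in that region) and adds $(i|j|k)$ to the higher inversion set. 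So the real content is: \emph{some} full triple $(i|j|k) \notin J(\un{w})$ can be made consecutive by commuting moves alone.

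The key step is to choose the right triple — one that is "innermost" or "minimal" in an appropriate sense. Concretely, among all full triples $(i|j|k) \notin J(\un{w})$, I would pick one minimizing the length of the span of its packet in the crossing sequence, i.e. minimizing the number of crossings $\prec$-between the first pair $(i|j)$ and the last pair $(j|k)$ of $P(i|j|k)$ (recall the three pairs occur in lexicographic order since the triple is not in $J$). Call a pair $(a|b)$ \emph{intervening} if it lies strictly between $(i|j)$ and $(j|k)$ in $\prec$ but is not one of $(i|j),(i|k),(j|k)$. I claim that for the minimal choice, no intervening pair $(a|b)$ can share a strand with $\{i,j,k\}$: if it did, $(a|b)$ together with two of $i,j,k$ would form a triple $(i'|j'|k')$, and using Theorem \ref{thm:MS}(2) (analyzing which of its three pairs have already appeared below the current position, and where $(a|b)$ sits relative to them) one shows this triple is full, is not in $J(\un{w})$, and has strictly smaller packet-span — contradicting minimality. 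The bookkeeping here is the heart of the argument: one must check, case by case on which strand of $\{i,j,k\}$ the pair $(a|b)$ meets and whether $a$ or $b$ is the shared strand, that the resulting sub-triple is full and lex-ordered. Once every intervening pair $(a|b)$ is disjoint from $\{i,j,k\}$, each such pair commutes past $(i|j), (i|k), (j|k)$ individually (swapping $(a|b)$ with $(i|j)$ etc. is a commuting braid relation since $\{a,b\}\cap\{i,j\}=\emptyset$, and such swaps do not change $J$ by Theorem \ref{thm:MSagain}(2)); sliding them all out yields the desired equivalent $\un{w}'$ with $P(i|j|k)$ consecutive.

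I expect the main obstacle to be the case analysis in the previous paragraph — proving that an intervening pair meeting $\{i,j,k\}$ forces a smaller bad full triple. There is a subtlety: knowing that some pair like $(i|k')$ has appeared below a given point does \emph{not} immediately tell us the induced order on the relevant packet is lexicographic; one has to invoke both clauses of Theorem \ref{thm:MS}(2), distinguishing whether the new triple is prefix-but-not-full, suffix-but-not-full, or full, and only in the "full, not in $J$" case do we get a genuine smaller instance. One must rule out the prefix/suffix-not-full alternatives, which should follow because $(i|j|k)$ being full means $w(k)<w(j)<w(i)$, and any triple built from two of $\{i,j,k\}$ and a strand $a$ with the relevant inversions present is then forced to be full as well by the example "$(i|j|k)$ is full iff $w(k)<w(j)<w(i)$" — but nailing down exactly which inversions are forced to be present requires care with the bottom-to-top reading convention. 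Termination of the greedy descent is then immediate: each packet flip strictly enlarges $J(\un{w})$ by Theorem \ref{thm:MSagain}(1), and $J$ is bounded by the finite set $F(w)$, so iterating produces a reduced expression whose higher inversion set is all of $F(w)$, which is exactly the unique sink of $\bar\G_w$, establishing Proposition \ref{prop:MS}.
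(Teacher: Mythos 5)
Your overall strategy (choose a full triple not in $J(\un{w})$ of minimal span in the crossing sequence, deal with intervening crossings, then flip) is the same as the paper's, but your key claim is false as stated, and this is a genuine gap. You claim that for the minimal-span choice every intervening pair is disjoint from $\{i,j,k\}$, so that the whole packet can be made consecutive in one go. Counterexample in $S_4$: the reduced expression $s_2s_1s_3s_2s_3s_1$ of the longest element, with crossing sequence $(2|3)\prec(1|3)\prec(2|4)\prec(1|4)\prec(1|2)\prec(3|4)$. Here $J(\un{w})=\{(1|2|3),(1|2|4)\}$, the full triples not in $J$ are $(1|3|4)$ (span $5$) and $(2|3|4)$ (span $6$), so the minimal choice is $(i|j|k)=(1|3|4)$; yet both intervening pairs $(2|4)$ and $(1|2)$ share a strand with $\{1,3,4\}$. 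Moreover your proposed mechanism for ruling this out fails: the only triples built from $(2|4)$ and two of $\{1,3,4\}$ are $(1|2|4)$ (full but antilexicographic, hence in $J$) and $(2|3|4)$ (full lexicographic but of \emph{larger} span), so no ``strictly smaller bad triple'' is produced, and similarly for $(1|2)$. So the case analysis you sketch cannot certify your claim, and the argument stalls there.

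The fix is the paper's more local induction on the minimal span. If the span exceeds $3$, one looks only at the single crossing $(x|y)$ immediately adjacent to one end of the packet inside the span (say immediately after $(i|j)$, or mirror-symmetrically immediately before $(j|k)$). To commute it past that end one only needs $\{x,y\}\cap\{i,j\}=\emptyset$ (not disjointness from all of $\{i,j,k\}$ --- in the example above $(2|4)$ commutes past $(1|3)$ even though it meets strand $4$), and this strictly shortens the span, so induction applies. In the remaining cases $\{x,y\}\cap\{i,j\}\neq\emptyset$ one gets, by a case analysis using Theorems \ref{thm:MS} and \ref{thm:MSagain}, either an outright impossible configuration (e.g.\ when $y=i$ or $x=j$, two pairs of a necessarily full packet would be adjacent in an order matching neither lexicographic nor antilexicographic) or a full lexicographic triple of strictly smaller span, again feeding the induction. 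Your closing remarks (flippability once the packet is consecutive, and termination via Theorem \ref{thm:MSagain}(1)) are fine, but the heart of the proof needs this one-crossing-at-a-time analysis rather than the global disjointness claim.
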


\begin{cor} \label{cor:MS} The function $h(\un{w})$ given by the size of $J(\un{w})$ is invariant on equivalence classes of reduced expressions, and descends to a height function on $\bar{\G}_w$. There is a unique sink in $\bar{\G}_w$, whose higher inversion set is the set of full triples for $w$. (There is also a unique source whose higher inversion set is empty, which follows from an ``backwards'' version of Proposition \ref{prop:MS2}.) \end{cor}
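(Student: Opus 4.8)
The plan is to derive the corollary directly from Theorem \ref{thm:MSagain} and Proposition \ref{prop:MS2}, treating the former as the structural input and the latter as the key ``no local obstruction'' ingredient. First I would establish that $h(\un{w}) := |J(\un{w})|$ descends to $\bar{\G}_w$: by Theorem \ref{thm:MSagain}(2), two reduced expressions of $w$ are equivalent in $\bar{\G}_w$ precisely when they have equal higher inversion sets, so in particular $|J(\un{w})|$ depends only on the equivalence class $[\un{w}]$. That gives a well-defined function $h$ on vertices of $\bar{\G}_w$. Next I would check that $h$ is a height function, i.e.\ that every antilexicographically oriented edge of $\bar{\G}_w$ raises $h$ by exactly one. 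An edge of $\bar{\G}_w$ comes from a braid relation $s_i s_{i+1} s_i \mapsto s_{i+1} s_i s_{i+1}$ applied to some representative; by Theorem \ref{thm:MSagain}(1) this adjoins exactly one new full triple to the higher inversion set, so $h$ increases by exactly $1$ across the edge. In particular $\bar{\G}_w$ is acyclic and graded, and since $w$ has finitely many reduced expressions the grading is bounded.

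For the sink: a sink is a vertex $[\un{w}]$ to which no oriented edge is outgoing, i.e.\ no packet of a full triple outside $J(\un{w})$ can be flipped from any representative. By the contrapositive of Proposition \ref{prop:MS2}, if some full triple for $w$ is \emph{not} in $J(\un{w})$, then some such flippable packet exists, so $[\un{w}]$ is not a sink. Hence at a sink $J(\un{w})$ must contain every full triple for $w$; conversely if $J(\un{w})$ equals the full set of full triples then, by Theorem \ref{thm:MSagain}(1), no braid relation can enlarge it, so there are no outgoing edges and $[\un{w}]$ is a sink. Uniqueness of the sink then follows from Theorem \ref{thm:MSagain}(2): any two sinks have the same higher inversion set (namely all full triples), hence represent the same vertex of $\bar{\G}_w$. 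One should also note such a sink exists: starting from any vertex and following outgoing edges, $h$ strictly increases and is bounded above, so the process terminates at a sink. The source statement is the mirror image, using the promised ``backwards'' version of Proposition \ref{prop:MS2} (flipping a non-empty $J(\un{w})$ downward): a source has $J(\un{w})=\emptyset$, uniqueness again follows from Theorem \ref{thm:MSagain}(2), and existence follows by descending $h$.

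The main obstacle here is not in the corollary itself but in the input Proposition \ref{prop:MS2}: the delicate point is showing that whenever $J(\un{w})$ misses a full triple, one can, possibly after moving to an equivalent representative via commuting relations, actually realize a packet flip adding a full triple --- and crucially, adding \emph{some} missing full triple rather than one already present. Granting Proposition \ref{prop:MS2} and Theorem \ref{thm:MSagain}, the corollary is a short formal argument: descent of $h$ is immediate from Theorem \ref{thm:MSagain}(2), the grading property from Theorem \ref{thm:MSagain}(1), and the unique sink/source from combining Proposition \ref{prop:MS2} with the finiteness of the reduced expression graph and boundedness of $h$.
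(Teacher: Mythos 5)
Your argument is correct and follows the same route the paper intends: the corollary is treated there as an immediate consequence of Theorem \ref{thm:MSagain} (invariance and the one-step increase of $|J(\un{w})|$) together with Proposition \ref{prop:MS2} (no sink can miss a full triple), with uniqueness from Theorem \ref{thm:MSagain}(2) and existence from finiteness. Your write-up simply makes this implicit derivation explicit, so there is nothing to correct.
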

	
The paper \cite{ManSch} only proves Proposition \ref{prop:MS2} for the longest element of $S_n$, so for completeness we include a proof for an arbitrary permutation $w$. Our proof is different than the one in \cite{ManSch}, though a proof along their lines would also work.

\begin{proof} Let $\un{w} = ((i_1|j_1) \prec (i_2|j_2) \prec \cdots \prec (i_d|j_d))$ be a reduced expression for $w$, for which we record its crossing sequence. Amongst the set of full triples $(i|j|k)$ for which $\un{w}$ induces the lexicographic order on the packet $P(i|j|k)$, choose one where the subword starting with $(i|j)$ (then passing through $(i|k)$) and ending with $(j|k)$ has minimal length. We will use induction on this minimal length to prove the result. If this length is $3$, i.e. the packet appears consecutively in $\un{w}$, then the packet can obviously be flipped. If not, then some other pair $(x|y)$ intervenes, either between $(i|j)$ and $(i|k)$, or between $(i|k)$ and $(j|k)$, or both. We will assume an intervening pair between $(i|j)$ and $(i|k)$, as the other case is treated similarly.
	
Suppose that \[ \un{w} = ( \cdots \prec (i|j) \prec (x|y) \prec \cdots),\] with $(x|y) \ne (i|k)$. If $\{i,j\} \cap \{x,y\} = \emptyset$ then a commuting relation will reverse the
order, yielding \[ \un{w}' = (\cdots \prec (x|y) \prec (i|j) \prec \cdots). \] So $\un{w}'$ is equivalent to $\un{w}$ and the minimal length in $\un{w}'$ is shorter. If $\{i,j\} \cap
\{x,y\}$ is nonempty, then there are several cases.

If $i = y$ so that $x < i < j < k$, then consider the triple $(x|i|j)$. The pairs $(i|j)$ and $(x|i)$ appear consecutively in this order in $\un{w}$, but $((x|i) \prec (i|j))$ does not appear in
either the lexicographic or the antilexicographic order on $P(x|i|j)$, a contradiction.

If $j = x$ so that $i < j < y$, then consider the triple $(i|j|y)$. The pairs $(i|j)$ and $(j|y)$ appear consecutively, which is again a contradiction.

If $j = y$ so that $x < j < k$, then consider the triple $(x|j|k)$. The pairs $(x|j)$ and $(j|k)$ appear in this order (though not consecutively). This means that $(x|k)$ must be somewhere in between, and hence $(x|j|k)$ is a full triple, appearing in lexicographic order. But then this triple $(x|j|k)$ has shorter length than $(i|j|k)$, so induction implies that some packet can be flipped.

Finally, if $i = x$, then there are several cases depending on where $y$ falls. Recall that \[\un{w} = (\cdots \prec (i|j) \prec (i|y) \prec \cdots \prec (i|k) \prec \cdots \prec (j|k) \prec \cdots).\]

If $i<y<j<k$ then consider the triple $(i|y|j)$. Since $(i|j)$ appears before $(i|y)$, the pair $(y|j)$ is also in $I(w)$ and occurs somewhere before $(i|j)$. Now consider the triple $(y|j|k)$. Since $(y|j)$ and $(j|k)$ both appear, so must $(y|k)$, at some point before $(j|k)$. Finally, consider the triple $(i|y|k)$. It is a full triple, and $(i|y)$ appears before $(i|k)$ so it must be in lexicographic order. Then the full triple $(i|y|k)$ has shorter length than $(i|j|k)$.

If $i<j<y<k$ then consider the triple $(j|y|k)$. Since $(j|k)$ does appear, either $(j|y)$ or $(y|k)$ must appear before it. If $(j|y)$ appears then the triple $(i|j|y)$ is full, with $(i|j)$ appearing before $(i|y)$, so it is lexicographic, and has shorter length. If $(y|k)$ appears then the triple $(i|y|k)$ is full, with $(i|y)$ appearing before $(i|k)$, so it is lexicographic, and has shorter length.

If $i<j<k<y$ then consider the triple $(i|k|y)$. Since $(i|y)$ appears before $(i|k)$, the $(k|y)$ is also in $I(w)$ and occurs somewhere before $(i|y)$. Now consider the triple $(j|k|y)$. Since $(k|y)$ and $(j|k)$ both appear, so must $(j|y)$, at some point before $(j|k)$. Finally, consider the triple $(i|j|y)$. It is a full triple, and $(i|j)$ appears before $(i|y)$ so it must be in lexicographic order. Then the full triple $(i|j|y)$ has shorter length than $(i|j|k)$.

This concludes the enumeration of cases, and the proof that some flippable packet exists.
\end{proof}

%========================
\subsection{Non-reduced expressions}
\label{subsec:partial}
%========================

For each $w \in S_n$, let $X_w$ denote the (infinite) set of all expressions for $w$, not necessarily reduced. Let $\bar{X}_w$ denote the set of equivalence classes in $X_w$,
identifying $\un{w}$ and $\un{w}'$ if they differ only by commuting relations \eqref{eq:sisj}. Then $X_w$ and $\bar{X}_w$ can be given the structure of graphs, analogous to $\G_w$ and
$\bar{\G}_w$, where one also has an edge for each relation $s_i s_i \mapsto 1$. In fact, $\G_w$ will be a subgraph of $X_w$, and $\bar{\G}_w$ of $\bar{X}_w$.

There is an additional structure on the graph of all expressions, which is not present when one restricts to reduced expressions. Namely, if $\un{w} \in X_w$ and $\un{y} \in X_y$ then
the concatenation $\un{w}\un{y}$ lives in $X_{wy}$. This concatenation operation descends to equivalence classes \[ \bar{X}_w \times \bar{X}_y \to \bar{X}_{wy}.\]

There is one more operation, which we could have discussed previously for the graphs $\G_w$ and $\bar{\G}_w$. For each $1 \le n \le m$ and $0 \le k \le m-n$ there is an inclusion $\iota
= \iota_k^{n \to m} \co S_n \to S_m$, via the maps $S_n \into S_k \times S_n \times S_{m-n-k} \into S_{m}$. That is, $\iota$ sends $s_i$ to $s_{i+k}$ for each $1 \le i \le n-1$. This
map $\iota$ induces inclusions of graphs $X_w \to X_{\iota(w)}$, which also descends to equivalence classes.

\begin{defn} \label{def:orientXw} Place an orientation on $\bar{X}_w$ for each $w$, where the edges are oriented $s_i s_{i+1} s_i \mapsto s_{i+1} s_i s_{i+1}$ and $s_i s_i \mapsto 1$. \end{defn}

This is the only reasonable orientation extending Manin-Schechtmann's orientation on $\bar{\G}_w$. We will prove later in this section that $\bar{X}_w$ has a unique sink.

Note that this orientation gives $\bar{X}_w$ the structure of a poset, where a sink is a minimal element. We now define a preorder on $X_w$, which will descend to a partial order
on $\bar{X}_w$, and which is stronger than the one induced by the orientation. This partial order will be one of the inputs to our generalized Bergman diamond lemma for Hecke-type
algebras. Recall the partial order $\le_3$ on words in $S_3$, built from the generators $s$ and $t$, given in Definition \ref{def:S3Redux}.

\begin{notation} If $\un{w}$ is an expression for $w \in S_n$, then for any pair $(i|j)$ of strands, let $n_{ij}$ denote the number of $(i|j)$ crossings in $\un{w}$. Clearly $\sum_{1
\le i < j \le n} n_{ij}$ is the length of $\un{w}$. If $\un{w}'$ is another expression, we write $n'_{ij}$ for the same statistic. \end{notation}

\begin{lemma} If $\un{w}$ and $\un{w}'$ are words in $S_3$ and have the same length, and they are comparable in $\le_3$, then $n_{12} = n'_{12}$, $n_{13} = n'_{13}$, and $n_{23} = n'_{23}$. \end{lemma}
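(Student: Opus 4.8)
The plan is to trace through the definition of $\le_3$ and observe that at each of its nontrivial stages, the tuple $(n_{12}, n_{13}, n_{23})$ is preserved. First I would note that if $\un{w}$ and $\un{w}'$ are comparable in $\le_3$ and have the same length, then (since the length strictly dominates) they are not related by the first clause, so they must fall through to one of the last three clauses; in particular they must express the same element $w$ of $S_3$, and they must be related by a sequence of \emph{braid relations} (the relations \eqref{eq:sisjsi}, i.e. $sts = tst$), with no applications of $ss \mapsto 1$ or $tt \mapsto 1$ or commuting relations — there are no commuting relations in $S_3$ anyway, since $s = s_1$ and $t = s_2$ are adjacent. So the entire content reduces to: a single application of the braid relation $sts \leftrightarrow tst$ preserves $(n_{12}, n_{13}, n_{23})$.

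The key step is therefore a local computation. Consider an expression in which a subword $sts$ (or $tst$) occurs, and track which pair of strands each of the three crossings in that subword crosses. Outside the subword, nothing changes, so $n_{ij}$ is unaffected there. Inside, by the discussion preceding Theorem \ref{thm:MS} (the paragraph on triples), whenever a subword $s_i s_{i+1} s_i$ or $s_{i+1} s_i s_{i+1}$ appears, its three crossings realize the packet $P(i|j|k)$ of some triple $(i|j|k)$ — here $n = 3$ forces $(i|j|k) = (1|2|3)$ and $P(1|2|3) = \{(1|2), (1|3), (2|3)\}$ — with the three crossings crossing the pairs $(1|2)$, $(1|3)$, $(2|3)$ in some order (lexicographic for $sts$, antilexicographic for $tst$). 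Applying the braid relation merely reverses the order in which these three pairs are crossed; it does not change the \emph{multiset} of pairs crossed, which is exactly $\{(1|2), (1|3), (2|3)\}$ before and after. Hence each of $n_{12}$, $n_{13}$, $n_{23}$ changes by $0$ under a single braid move. Concatenating braid moves, the statistics are preserved along any sequence of braid relations, so $n_{ij} = n'_{ij}$ for all pairs, as claimed.

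I do not anticipate a serious obstacle: the only thing requiring a modicum of care is the opening reduction — confirming that same-length comparable words in $\le_3$ are genuinely connected by braid-only moves (no quadratic $ss \mapsto 1$ moves creep in, which would change lengths \emph{and} the crossing-pair multiset), and that the local "packet" bookkeeping is insensitive to \emph{where} in the ambient word the $sts$ occurs. Both follow immediately from the definitions already in place (Definition \ref{def:S3Redux} and the triples discussion), so the proof is short. One small subtlety worth stating explicitly is that even though the pairs are crossed in a different \emph{order} after a braid move, the lemma only asserts equality of the unordered counts $n_{ij}$, which is all we get and all we need.
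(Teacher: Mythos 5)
Your argument is correct and is essentially the paper's own proof, which simply observes that comparable same-length words in $\le_3$ are by definition related by braid relations, and that braid relations never change the number of crossings of each type $(i|j)$. (Your parenthetical that the packet is crossed in lexicographic/antilexicographic order is not literally true inside a non-reduced ambient word, but it is harmless since, as you note, only the unordered multiset of crossed pairs is used.)
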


\begin{proof} This is obvious, as braid relations never change the number of crossings of each type $(i|j)$. \end{proof}

\begin{defn} \label{def:partial} If $\un{w}$ is an expression and $(i|j|k)$ is a triple, let $\un{w}_{ijk}$ denote the expression inside $S_3$ obtained by tracing out the triple of strands $(i|j|k)$. An example is pictured below inside $S_8$, where the irrelevant strands are drawn without color:
\[{
\labellist
\small\hair 2pt
 \pinlabel {$2$} [ ] at 24 4
 \pinlabel {$4$} [ ] at 32 4
 \pinlabel {$6$} [ ] at 40 4
 \pinlabel {$\un{w} = $} [ ] at 6 44
 \pinlabel {$\un{w}_{246} = $} [ ] at 72 44
 \pinlabel {$ = ststs$} [ ] at 125 44
\endlabellist
\centering
\ig{1.3}{tripleexpression}
} \]
For two expressions $\un{w}$ and $\un{w}'$ in $X_w$, we say that $\un{w} < \un{w}'$ if $\un{w}$ is shorter than $\un{w}'$, and that $\un{w} \le \un{w}'$ if they have the same length and if $\un{w}_{ijk} \le_3 \un{w}'_{ijk}$ for all triples $(i|j|k)$. \end{defn}

Clearly $\le$ is a transitive and reflexive on $X_w$, so it is a preorder.

\begin{remark} \label{rmk:partialextended} If desired, one can also extend this relation to the entire set $X$ of all expressions for any element. If $\un{w}$ and $\un{y}$ are expressions for different elements, then $\un{w} < \un{y}$ if and only if $\un{w}$ is shorter. In this section, we consider each $X_w$ separately. \end{remark}

\begin{lemma} \label{lem:itscompat} The preorder $\le$ on $X_w$ is compatible with concatenation in that, for any words $A$ and $B$, $\un{w} \le \un{w}'$ if and only if $A \un{w} B \le
A \un{w}' B$. It is compatible with inclusions in that, for any $n, m, k$ such that $\iota = \iota_k^{n \to m}$ is defined, $\un{w} \le \un{w}'$ implies $\iota(\un{w}) \le
\iota(\un{w}')$. Finally, $\le$ satisfies the DCC. \end{lemma}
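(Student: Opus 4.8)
The plan is to verify the three claimed properties of $\le$ separately, each of which reduces to elementary bookkeeping about crossing sequences plus the already-established properties of $\le_3$ from Definition \ref{def:S3Redux}. Throughout, the key observation is that the operation $\un{w} \mapsto \un{w}_{ijk}$ of tracing out a triple of strands is compatible with all the structural operations in sight: concatenation, the inclusions $\iota_k^{n\to m}$, and length.

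\textbf{Compatibility with concatenation.} First I would reduce to the case $B$ empty (by symmetry, handling left and right concatenation separately) and then to $A$ a single generator $s_\ell$, since concatenation by a word is an iterated concatenation by generators. The length statement is immediate: $A\un{w}B$ is shorter than $A\un{w}'B$ iff $\un{w}$ is shorter than $\un{w}'$. For the equal-length case I would show that tracing out a triple commutes appropriately with prepending $s_\ell$: for a triple $(i|j|k)$, the expression $(s_\ell \un{w})_{ijk}$ is obtained from $\un{w}_{ijk}$ by prepending the crossing (if any) that $s_\ell$ induces among the strands $\{i,j,k\}$ — concretely, $s_\ell$ contributes a crossing to the triple exactly when $\ell, \ell+1$ both lie in $\{i,j,k\}$, and in that case it prepends a specific generator of $S_3$ to $\un{w}_{ijk}$; otherwise $(s_\ell\un{w})_{ijk} = \un{w}_{ijk}$ after relabeling. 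Either way, $(s_\ell\un{w})_{ijk}$ and $(s_\ell\un{w}')_{ijk}$ are obtained from $\un{w}_{ijk}$ and $\un{w}'_{ijk}$ by prepending the \emph{same} word in $S_3$, so the claim follows from the compatibility of $\le_3$ with concatenation (which holds since $\le_3$ is a semigroup partial order by the remark following Definition \ref{def:S3Redux}). One subtlety: prepending $s_\ell$ changes which strand positions carry which labels, so the bijection between triples $(i|j|k)$ for $w$ and triples for $s_\ell w$ must be set up carefully, but it is a bijection and it intertwines the two tracing operations, which is all that is needed.

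\textbf{Compatibility with inclusions.} Here the point is even simpler: if $\iota = \iota_k^{n\to m}$, then a triple $(a|b|c)$ of strands in $\{1,\dots,m\}$ either meets the ``active'' window $\{k+1,\dots,k+n\}$ in fewer than three strands — in which case $\iota(\un{w})_{abc}$ is the identity expression of the appropriate (short) length, independent of $\un{w}$ — or it lies entirely in the window, in which case $\iota(\un{w})_{abc} = \un{w}_{(a-k)(b-k)(c-k)}$. In the first case the two traced expressions are literally equal; in the second they inherit $\le_3$ from the corresponding triple of $\un{w}, \un{w}'$. Since lengths are preserved by $\iota$, the equal-length case of $\le$ transfers directly, and the strictly-shorter case is trivial.

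\textbf{The DCC.} This is the step I expect to be the main (though still modest) obstacle, only because it requires combining the two notions $<$ and $\le$ correctly. Any strictly descending chain in $\le$ on $X_w$ first descends weakly in length and then, within a fixed length, descends in the equal-length part of the order. Since lengths are non-negative integers bounded below, only finitely many length drops can occur, so it suffices to prove the DCC for the restriction of $\le$ to expressions of a \emph{fixed} length $d$. But for fixed $d$ there are only finitely many expressions for $w$ of length $d$ (each is a word of length $d$ in the finitely many generators), so any subset has minimal elements and the DCC is automatic. I would state it this cleanly rather than invoking $\le_3$'s DCC triple-by-triple, since the finiteness argument is shorter and self-contained; the only thing to be careful about is to note explicitly that $\le$ on $X_w$ compares only expressions of the same length or differing in length, never ``mixing,'' so that a descending chain genuinely factors as finitely many length-drops interspersed with descents inside finite fibers.

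\textbf{Remark on what must be checked.} The one genuinely non-formal input is that tracing out a triple is a monoidal operation — i.e. $(\un{u}\,\un{v})_{ijk}$ is built from $\un{u}_{ijk}$ and $\un{v}_{ijk}$ in the evident way — and this is visually obvious from the crossing-diagram picture (Definition \ref{def:partial}): the strands $i,j,k$ pass through the concatenated diagram, and the crossings among them in $\un{u}\,\un{v}$ are exactly those in $\un{u}$ among $\{i,j,k\}$ followed by those in $\un{v}$ among the images of $\{i,j,k\}$ under $u$. I would record this as a one-line sublemma and cite it in both the concatenation and the inclusion arguments.
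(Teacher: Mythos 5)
Your skeleton is the paper's own (the published proof is a two-line reduction to the properties of $\le_3$, leaving the rest as an exercise), and your DCC argument is fine, but two of the concrete steps you supply do not work as written. First, the statement is an \emph{if and only if}, and the backward direction is the one the paper actually uses later (cancelling a common generator $s_i$ in the proof that $\le$ descends to a partial order on $\bar{X}_w$). Your reduction correctly shows that the traced triples of $A\un{w}B$ and $A\un{w}'B$ have the form $P\,\un{w}_{\tau}\,Q$ and $P\,\un{w}'_{\tau}\,Q$ with the \emph{same} $P,Q$ (this is where one uses that $\un{w}$ and $\un{w}'$ express the same $w$), but the semigroup property of $\le_3$ only gives $U\le_3 V\Rightarrow PUQ\le_3 PVQ$. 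The converse needs a cancellation statement: if $PUQ$ and $PVQ$ are related by $sts\leftrightarrow tst$ moves, then so are $U$ and $V$. That is precisely cancellativity of the positive braid monoid on $s,t$ (true, classically due to Garside, and provable directly in this rank-$2$ case), but it is not a formal consequence of anything you cite and must be stated and proved or referenced; the lexicographic comparison does transfer, since the first discrepancy between $PUQ$ and $PVQ$ lies in the middle segment. A smaller slip: for concatenation at the top of the diagram, the new crossing of $s_\ell$ lands in the triple $(i|j|k)$ exactly when $w^{-1}(\ell)$ and $w^{-1}(\ell+1)$ lie in $\{i,j,k\}$, not when $\ell,\ell+1$ do; your criterion (and the need to relabel triples by $s_\ell$) is the one for concatenation at the bottom. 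The essential point — the same letter is attached at the same end for both expressions, under a bijection of triples depending only on $A$, $B$, $w$ — survives, but the two sides should be stated correctly.

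Second, in the inclusion argument, a triple $(a|b|c)$ meeting the window in exactly two strands does \emph{not} trace to an expression independent of $\un{w}$: it traces to $x^{n_{pq}}$, a power of a single generator of $S_3$ whose exponent is the number of crossings of the corresponding pair $(p|q)$ in $\un{w}$, which can be nonzero and is a priori different for $\un{w}$ and $\un{w}'$. What rescues the claim is that in the equal-length case one in fact has $n_{pq}=n'_{pq}$ for every pair: each $\un{w}_{ijk}\le_3\un{w}'_{ijk}$ forces the traced length of $\un{w}$ to be at most that of $\un{w}'$; summing traced lengths over all triples gives $(m-2)$ times the total length (the counting identity in the paper's lemma just after Lemma \ref{lem:equivmeans}), so equality of total lengths forces all traced lengths to be equal, and $\le_3$-comparability at equal length forces equal crossing numbers. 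You need to insert this step (only triples entirely inside the window can then be nontrivial, and those inherit $\le_3$ as you say) before the inclusion compatibility is complete.
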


\begin{proof} We check compatibility with concatenation, as the remaining properties are a simple exercise. We claim that this follows from the fact that the partial order $\le_3$ on words in $S_3$ is compatible with
concatenation. For any $i < j < k$, tracing three strands through the composition $A \un{w} B$ is the composition of tracing three strands through $B$, then through $\un{w}$, then through $A$, although the labels on these strands change. More precisely, if $\un{w}$ expresses $w \in S_m$ then
\begin{equation} \label{AwBijk} (A\un{w} B)_{ijk} = A_{i''j''k''} \un{w}_{i'j'k'} B_{ijk}, \end{equation}
where $B(\{i,j,k\})= \{i',j',k'\}$ and $w(\{i', j', k'\}) = \{i'', j'', k''\}$. If $\un{w} \le \un{w}'$ and both have the same length, then they express the same element $w$, so that $(A \un{w} B)_{ijk}$ and $(A \un{w}' B)_{ijk}$ both obey \eqref{AwBijk} with the same primed indices.
Then $(A\un{w} B)_{ijk} \le (A\un{w}' B)_{ijk}$ follows because $\un{w}_{i'j'k'} \le \un{w}'_{i'j'k'}$. \end{proof}

\begin{lemma} \label{lem:equivmeans} For $\un{w}, \un{w}' \in X_w$, we have $\un{w} \le \un{w}' \le \un{w}$ if and only if $\un{w}_{ijk} = \un{w}'_{ijk}$ for all triples $(i|j|k)$. If so, $\un{w}$ and $\un{w}'$ have the same length, and $n_{ij} = n'_{ij}$ for all pairs $(i|j)$. \end{lemma}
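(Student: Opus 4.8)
The plan is to prove the two directions of the equivalence and then deduce the "if so" clause. The reverse direction is immediate: if $\un{w}_{ijk} = \un{w}'_{ijk}$ for all triples, then in particular $\un{w}_{ijk} \le_3 \un{w}'_{ijk}$ and $\un{w}'_{ijk} \le_3 \un{w}_{ijk}$ for all triples, and both expressions have the same length (since the length can be recovered from any single triple's data, or more directly since each crossing of $\un{w}$ is a crossing of some traced-out triple). Hence $\un{w} \le \un{w}'$ and $\un{w}' \le \un{w}$ by Definition \ref{def:partial}.

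For the forward direction, suppose $\un{w} \le \un{w}' \le \un{w}$. By Definition \ref{def:partial}, neither is strictly shorter than the other, so they have the same length, and $\un{w}_{ijk} \le_3 \un{w}'_{ijk}$ and $\un{w}'_{ijk} \le_3 \un{w}_{ijk}$ for every triple $(i|j|k)$. Fix a triple. Since $\le_3$ is a partial order (as recorded after Definition \ref{def:S3Redux}), antisymmetry gives $\un{w}_{ijk} = \un{w}'_{ijk}$. This is the desired conclusion. The one point to check carefully is that the two traced-out expressions have the same length, so that they lie in the part of $X$ (inside $S_3$) where $\le_3$ is genuinely antisymmetric rather than merely forcing incomparability; but this follows because $\un{w}$ and $\un{w}'$ have the same length and, for a fixed triple, the number of crossings of $\un{w}_{ijk}$ of each type $(i'|j')$ among the three traced strands is just $n_{i'j'}$ computed within that triple—and equality of the $\un{w}_{ijk}$ in $\le_3$ on both sides is precisely what antisymmetry of $\le_3$ delivers once lengths match.

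It remains to verify the "if so" clause. Suppose $\un{w}_{ijk} = \un{w}'_{ijk}$ for all triples $(i|j|k)$. Given any pair of strands $(i|j)$ with $i<j$, choose any third strand $k$ to form a triple $(i|j|k)$ (possible once $n \ge 3$; if $n \le 2$ the claim is trivial since there are no braid moves and the hypothesis already pins down $\un{w}$). In the traced-out expression $\un{w}_{ijk}$, the crossings involving only strands $i$ and $j$ are exactly the $(i|j)$-crossings of $\un{w}$, so the count of such crossings in $\un{w}_{ijk}$ equals $n_{ij}$, and likewise equals $n'_{ij}$ in $\un{w}'_{ijk}$. Since $\un{w}_{ijk} = \un{w}'_{ijk}$ as expressions in $S_3$, these counts agree, so $n_{ij} = n'_{ij}$. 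Summing over all pairs gives that $\un{w}$ and $\un{w}'$ have the same length, completing the proof.

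I expect the only genuine subtlety to be bookkeeping: making sure that "tracing out the triple $(i|j|k)$" interacts correctly with crossing counts (each crossing of $\un{w}$ contributes to exactly those triples containing both of its strands), and invoking antisymmetry of $\le_3$ only in the length-equal regime where Definition \ref{def:S3Redux} actually makes it a partial order. Neither is a real obstacle; the statement is essentially a packaging of Definition \ref{def:partial} together with the fact that $\le_3$ is a partial order.
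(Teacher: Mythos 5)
Your proof is correct and follows essentially the same route as the paper: antisymmetry of $\le_3$ gives the forward direction, and recovering $n_{ij}$ from any traced-out triple containing $(i|j)$ and summing over pairs gives $n_{ij}=n'_{ij}$, the equality of lengths, and hence the reverse direction. Your extra worry about the traced expressions having equal lengths is unnecessary---$\le_3$ is antisymmetric on all words, regardless of length---but it does no harm.
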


\begin{proof} Most of this lemma is obvious, since $\le_3$ is a partial order so $A \le_3 B \le_3 A$ implies $A=B$. The only thing to show is that if $\un{w}_{ijk} = \un{w}'_{ijk}$ for
all $(i|j|k)$ then $\un{w}$ and $\un{w}'$ have the same length. But if $\un{w}_{ijk} = \un{w}'_{ijk}$ then $n_{ij} = n'_{ij}$. Summing over all pairs $(i|j)$, we see that they have the
same overall length. \end{proof}

We also prove, but do not use, the following observation.

\begin{lemma} If $\un{w}_{ijk} \le_3 \un{w}'_{ijk}$ for all triples $(i|j|k)$, then the length of $\un{w}$ is at most the length of $\un{w}'$. \end{lemma}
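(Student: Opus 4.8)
The plan is to deduce the global length inequality from the three-strand restrictions, by summing the restricted length inequalities over all triples and using that every pair of strands lies in the same number of triples. The only property of $\le_3$ I need is this: if $A \le_3 B$ for words $A,B$ in $S_3$, then the length of $A$ is at most the length of $B$. This is immediate from Definition \ref{def:S3Redux}, since in each clause defining $\le_3$ either $A$ is strictly shorter than $B$, or $A$ and $B$ have equal length. Applying this to each hypothesis $\un{w}_{ijk} \le_3 \un{w}'_{ijk}$, we get that the length of $\un{w}_{ijk}$ is at most the length of $\un{w}'_{ijk}$ for every triple $(i|j|k)$.

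The next step is to express these lengths as crossing counts. Tracing out the strands $i<j<k$ destroys every crossing of $\un{w}$ that involves a strand outside $\{i,j,k\}$ (such a crossing becomes a strand passing straight through) and keeps exactly the crossings whose pair is $(i|j)$, $(i|k)$, or $(j|k)$; these become the $(1|2)$-, $(1|3)$-, and $(2|3)$-crossings of $\un{w}_{ijk}$ respectively. Hence the length of $\un{w}_{ijk}$ equals $n_{ij}+n_{ik}+n_{jk}$, and similarly the length of $\un{w}'_{ijk}$ equals $n'_{ij}+n'_{ik}+n'_{jk}$. So the previous step yields, for each triple $(i|j|k)$,
$$ n_{ij}+n_{ik}+n_{jk} \le n'_{ij}+n'_{ik}+n'_{jk}. $$

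Finally I would sum this inequality over all $\binom{n}{3}$ triples $(i|j|k)$ with $i<j<k$. A pair $(a|b)$ occurs among the three pairs of the triple $(i|j|k)$ exactly when $\{a,b\}\subset\{i,j,k\}$, i.e. exactly when the third index of the triple is chosen among the remaining $n-2$ strands; so $n_{ab}$, and likewise $n'_{ab}$, is counted precisely $n-2$ times in the sum. Since the total length of an expression is the sum of its crossing counts over all pairs, the summed left-hand side equals $(n-2)$ times the length of $\un{w}$ and the summed right-hand side equals $(n-2)$ times the length of $\un{w}'$. Assuming $n\ge 3$ (for $n\le 2$ there are no triples), we divide by $n-2$ and conclude that the length of $\un{w}$ is at most the length of $\un{w}'$.

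I do not anticipate a genuine obstacle. The only point requiring a moment's thought is the passage from the ``local'' data — the lengths of the restrictions $\un{w}_{ijk}$ — to the ``global'' length, and this is exactly what the counting identity ``each pair of strands lies in $n-2$ triples'' handles, collapsing the sum of the restricted inequalities into a positive multiple of the inequality we want.
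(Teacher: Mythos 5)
Your proof is correct and follows essentially the same route as the paper: restrict to each triple, note that $\le_3$ only relates words whose lengths are weakly ordered, sum over all triples, and use that each pair of strands lies in exactly $n-2$ triples so the sum is $(n-2)$ times the total length. The paper phrases this by splitting into the strict-inequality and all-equal cases before summing, but that is only a cosmetic difference from your summation of weak inequalities.
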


\begin{proof} Suppose $w$ is a permutation in $S_m$. Then the statement is obvious for $m \le 3$.
	
Because $\un{w}_{ijk} \le_3 \un{w}'_{ijk}$, either $\un{w}_{ijk}$ has shorter length and
\begin{equation} \label{eq:foobar1} n_{ij} + n_{ik} + n_{jk} < n'_{ij} + n'_{ik} + n'_{jk},\end{equation}
or $\un{w}_{ijk}$ and $\un{w}'_{ijk}$ have the same length, and
\begin{equation} \label{eq:foobar2} n_{ij} = n'_{ij}, \quad n_{ik} = n'_{ik}, \quad n_{jk} = n'_{jk}. \end{equation}
If \eqref{eq:foobar2} holds for all triples $(i|j|k)$ then clearly the lengths of $\un{w}$ and $\un{w}'$ are equal. So suppose \eqref{eq:foobar1} holds for some triple. Taking the sum over all triples, we obtain
\begin{equation} \sum_{(i|j|k)} n_{ij} + n_{ik} + n_{jk} < \sum_{(i|j|k)} n'_{ij} + n'_{ik} + n'_{jk}. \end{equation}
However, the left hand side is precisely $(m-2) \sum_{(i|j)} n_{ij}$, which is $(m-2)$ times the length of $\un{w}$. Thus the length of $\un{w}$ is smaller than the length of $\un{w}'$. \end{proof}

\begin{prop} One has $\un{w} \le \un{w}' \le \un{w}$ if and only if these two expressions are equivalent in $\bar{X}_w$. Therefore, $\le$ descends to a partial order on $\bar{X}_w$. \end{prop}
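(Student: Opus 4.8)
The plan is to prove both directions of the biconditional, leaning on Lemma~\ref{lem:equivmeans} to reduce the claim to the following statement: for $\un{w}, \un{w}' \in X_w$, one has $\un{w}_{ijk} = \un{w}'_{ijk}$ for all triples $(i|j|k)$ if and only if $\un{w}$ and $\un{w}'$ are related by a sequence of commuting relations \eqref{eq:sisj}. Granting this, the final sentence about $\le$ descending to a partial order on $\bar{X}_w$ is immediate: $\le$ is already a preorder by the remark after Definition~\ref{def:partial}, and a preorder descends to a genuine partial order on the quotient by the equivalence relation ``$\un{w} \le \un{w}' \le \un{w}$'', which by the first assertion is exactly the equivalence relation defining $\bar{X}_w$.

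For the easy direction, suppose $\un{w}$ and $\un{w}'$ differ by a single commuting relation $s_i s_j \mapsto s_j s_i$ with $j \ne i \pm 1$, swapping two adjacent crossings labeled $(i|j)$ and $(k|l)$ with $\{i,j\} \cap \{k,l\} = \emptyset$ (using the analysis from \S\ref{subsec:inversions} of which crossings such a relation exchanges). As observed there, two disjoint pairs lie in no common packet $P(a|b|c)$, so tracing out any triple of strands yields the same $S_3$-expression before and after the swap: $\un{w}_{ijk} = \un{w}'_{ijk}$ for every triple. Iterating over a sequence of commuting relations gives the claim, and then Lemma~\ref{lem:equivmeans} yields $\un{w} \le \un{w}' \le \un{w}$.

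The substantive direction is the converse, and this is where I expect the main obstacle. Suppose $\un{w}_{ijk} = \un{w}'_{ijk}$ for all triples; by Lemma~\ref{lem:equivmeans}, $\un{w}$ and $\un{w}'$ have the same length and the same crossing-type multiplicities $n_{ij} = n'_{ij}$. I want to produce a sequence of commuting relations taking $\un{w}$ to $\un{w}'$. The natural strategy is an induction on length, peeling off the bottom crossing: let $(i|j)$ be the label of the bottommost crossing of $\un{w}$. I claim $\un{w}'$ can be rewritten, using only commuting relations, so that its bottommost crossing is also labeled $(i|j)$; then one cancels the bottom crossing from both (it is the ``same'' crossing in the sense that removing it changes the underlying permutation in the same way, since the crossing data below it is identical in both) and applies the inductive hypothesis to the shorter expressions. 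To justify the claim, let $(k|l)$ be the bottommost label of $\un{w}'$. If $\{i,j\} \cap \{k,l\} = \emptyset$, then I need to show the $(i|j)$-crossing can be commuted down past everything below it in $\un{w}'$ — and the obstruction to commuting a crossing past an adjacent one is precisely a braid-type overlap, i.e.\ two crossings sharing a strand appearing consecutively, which is detected by a packet. Here the hypothesis $\un{w}_{ijk} = \un{w}'_{ijk}$ for all triples is exactly what controls these packet orderings: the relative order of any two crossings sharing a strand is pinned down by the traced $S_3$-expression on the triple containing them, so the order in which such ``entangled'' crossings occur is forced to agree between $\un{w}$ and $\un{w}'$, while only the disjoint crossings are free to be permuted — and those are precisely the ones movable by commuting relations. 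The case $\{i,j\} \cap \{k,l\} \ne \emptyset$ must be handled by showing it cannot persist: if both $\un{w}$ and $\un{w}'$ start (reading from the bottom) with crossings, and the multiset of labels near the bottom is controlled by triples, one argues via a minimal-counterexample / exchange argument, much in the spirit of the case analysis in the proof of Proposition~\ref{prop:MS2}, that the orderings of shared-strand crossings cannot disagree. The bookkeeping of which crossing of $\un{w}'$ corresponds to the bottom crossing of $\un{w}$, and the verification that after removal the two expressions still satisfy ``$\un{w}_{ijk} = \un{w}'_{ijk}$ for all triples'' of the smaller symmetric group, is the delicate part; I would set it up carefully by tracking crossing sequences (now multisets, since expressions need not be reduced) rather than the expressions themselves.
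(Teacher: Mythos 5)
Your overall strategy coincides with the paper's: the easy direction because triples and length are blind to \eqref{eq:sisj}, and the converse by induction on length, stripping a common bottom crossing after rearranging by commuting relations, with equality of the traced $S_3$-expressions forcing the relative order of crossings that share a strand. The gap is that the decisive step is only announced, not proved: you assert that the relevant crossing of $\un{w}'$ can be commuted to the bottom because ``packet orderings pin down shared-strand crossings,'' and you defer the intersecting case to an unspecified exchange argument. The paper closes exactly this step with a short concrete argument, and something of this sort must be written out. Let $s_i$ be the bottommost crossing of $\un{w}'$; one shows that in $\un{w}$ no $s_{i+1}$ or $s_{i-1}$ occurs below the first $s_i$. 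Indeed, if some $s_{i+1}$ did, then below the first $s_{i+1}$ in $\un{w}$ neither $s_i$ nor $s_{i+1}$ occurs, so strand $i+1$ still sits in position $i+1$ and positions $i+2,\dots,m$ carry strands from $\{i+2,\dots,m\}$; hence that $s_{i+1}$ crosses strands $(i+1|j)$ with $j\ge i+2$, and no crossing among the strands $i,i+1,j$ occurs below it. Therefore $\un{w}_{(i|i+1|j)}$ begins with $t$, while $\un{w}'_{(i|i+1|j)}$ begins with $s$ (its bottom crossing is $(i|i+1)$), contradicting Lemma~\ref{lem:equivmeans}; the $s_{i-1}$ case is symmetric. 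Then every letter below the first $s_i$ in $\un{w}$ commutes with it, so commuting relations bring $s_i$ to the bottom of $\un{w}$, and both expressions now end (at the bottom) in the same letter.

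Two of the points you flag as the delicate part are in fact immediate, and your proposal overbuilds there. If the bottom crossings of $\un{w}$ and $\un{w}'$ involve intersecting but unequal pairs of strands, the two traced words on the three-strand union already have different first letters, a one-line contradiction; no minimal-counterexample analysis in the style of Proposition~\ref{prop:MS2} is needed. And once both expressions have been arranged to share their bottom letter, removing it preserves $\un{w}\le\un{w}'\le\un{w}$ by Lemma~\ref{lem:itscompat} (compatibility of $\le$ with concatenation), so the inductive hypothesis applies directly; there is no multiset bookkeeping of crossing sequences to set up. With the alignment claim proved as above and these simplifications, your sketch becomes essentially the paper's proof.
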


\begin{proof} One direction is easy. The commuting braid relations $s_i s_j = s_j s_i$ are not detected by any triples, and do not change the length of the expression. Thus two expressions equivalent in $\bar{X}_w$ are equivalent under $\le$. We now prove the other direction. Let $\un{w} \le \un{w}' \le \un{w}$. By Lemma \ref{lem:equivmeans}, we know that $n_{ij} = n'_{ij}$ for all $(i|j)$, and $\un{w}_{ijk} = \un{w}'_{ijk}$ for all $(i|j|k)$.

Suppose that $s_i$ is the first simple reflection in $\un{w}'$ (i.e. it appears on the far right of the word, reading right to left, or on the bottom of a crossing diagram). Clearly
$s_i$ appears somewhere in $\un{w}$, or the crossing $(i|i+1)$ could never happen. We claim that an instance of $s_i$ appears before any $s_{i+1}$ or $s_{i-1}$ in $\un{w}$. Suppose that
$s_{i+1}$ appears before the first $s_i$ in $\un{w}$. Before the first instance of $s_{i+1}$, neither $s_i$ nor $s_{i+1}$ appears, so the strands $\{i+2, i+3, \ldots, m\}$ are permuted
amongst themselves (where $w \in S_m$), and strand $i+1$ is fixed. Therefore, the first $s_{i+1}$ has crossing type $(i+1,j)$ for some $j \in \{i+2, \ldots, m\}$. But then a crossing
$(i+1,j)$ appears before $(i|i+1)$. This contradicts the fact that $\un{w}_{(i|i+1|j)} = \un{w}'_{(i|i+1|j)}$. A similar contradiction arises if $s_{i-1}$ appears before $s_i$.

Thus, a sequence of commuting braid relations $s_i s_j = s_j s_i$ will send $s_i$ to the start of $\un{w}$. Up to equivalence, we can assume $\un{w}$ also started with $s_i$. Removing this index $s_i$ from the start of both expressions, the inequality $\le$ still holds by Lemma \ref{lem:itscompat}. We can then use induction on the length of expressions to prove that
$\un{w}$ and $\un{w}'$ are equivalent in $\bar{X}_w$.
\end{proof}

\begin{lemma} The partial order $\le$ on $\bar{X}_w$ respects the orientation of Definition \ref{def:orientXw}. \end{lemma}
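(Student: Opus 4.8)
The plan is to show that whenever there is an oriented edge $\un{w} \to \un{w}'$ in $\bar{X}_w$, then $\un{w}' < \un{w}$ (strictly, in the partial order just constructed on $\bar{X}_w$). Since the partial order on $\bar{X}_w$ is generated by the strict relation $<$, and oriented paths are compositions of oriented edges, this suffices. There are two types of oriented edges to treat: the braid flips $s_i s_{i+1} s_i \mapsto s_{i+1} s_i s_{i+1}$ and the quadratic reductions $s_i s_i \mapsto 1$.

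For a quadratic edge $s_i s_i \mapsto 1$, applied inside a word $A s_i s_i B$, the resulting expression $AB$ is strictly shorter, so by the very first clause of Definition~\ref{def:partial} we have $AB < A s_i s_i B$ as words in $X_w$, hence strictly below in $\bar{X}_w$. That case is immediate. For a braid edge, the two expressions $\un{w} = A s_i s_{i+1} s_i B$ and $\un{w}' = A s_{i+1} s_i s_{i+1} B$ have the same length, so I must verify that $\un{w}'_{xyz} \le_3 \un{w}_{xyz}$ for every triple $(x|y|z)$, with strict inequality (i.e. non-equivalence in $\bar{X}_w$) holding. By Lemma~\ref{lem:itscompat}, the traced-out expressions $\un{w}_{xyz}$ and $\un{w}'_{xyz}$ agree except on those triples $(x|y|z)$ whose strand set is affected by the braid move, namely the triple $(i|j|i{+}1)$ (in sorted order) whose packet $P$ is the one being flipped — here $j$ is whatever third strand completes a full packet, but more carefully: the braid relation $s_i s_{i+1} s_i \mapsto s_{i+1} s_i s_{i+1}$ occurring at a given position corresponds, via the crossing-sequence analysis in \S\ref{subsec:inversions}, to a packet $P(i|j|k)$ appearing in consecutive order, flipping from lexicographic to antilexicographic. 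For a triple $(x|y|z)$ whose strand set differs from $\{i,j,k\}$, the traced-out words are literally unchanged by the move (the three strands $x,y,z$ pass through the local picture without interacting), so $\un{w}_{xyz} = \un{w}'_{xyz}$. For the triple $(i|j|k)$ itself, tracing out leaves exactly the local move $sts \mapsto tst$ (times the unchanged prefix/suffix crossings involving these three strands), and $tst <_3 sts$ by the remark following Definition~\ref{def:S3Redux}; compatibility of $\le_3$ with concatenation then gives $\un{w}'_{ijk} <_3 \un{w}_{ijk}$. Hence $\un{w}' \le \un{w}$ and they are not equivalent (the $(i|j|k)$ traced words genuinely differ), so $\un{w}' < \un{w}$ in $\bar{X}_w$.

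The step I expect to require the most care is the claim that a single braid move, occurring at one position in a possibly very long non-reduced word, affects the traced-out $S_3$-word $\un{w}_{xyz}$ only for $(x|y|z)$ equal to the single triple of strands physically involved in that local $sts$ pattern. In the reduced case this is clean because crossing types are distinct, but in a non-reduced expression a strand can be crossed with another strand multiple times, so one must argue that tracing out a triple not equal to $\{i,j,k\}$ genuinely yields an identical word before and after — this is really the statement that the three consecutive crossings $s_i s_{i+1} s_i$ and $s_{i+1} s_i s_{i+1}$ induce the same permutation and, when any triple of strands is restricted, yield the same (possibly trivial) braiding of those three strands. Once that local bookkeeping is pinned down — most cleanly by observing that the braid move is an identity in $S_n$ so all strand trajectories outside the local gadget are literally unchanged, and inside the gadget the three strands $i, i{+}1$ and one more are the only ones whose mutual crossing pattern can change — the rest is a direct appeal to Lemma~\ref{lem:itscompat}, to the compatibility of $\le_3$ with concatenation, and to $tst <_3 sts$.
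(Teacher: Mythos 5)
Your proposal is correct and follows essentially the same route as the paper's (much terser) proof: the quadratic move strictly shortens the word, and a braid move changes the traced-out $S_3$-word of exactly one triple of strands, sending it strictly lower in $\le_3$ while leaving all other triples untouched. The only blemish is the citation of Lemma~\ref{lem:itscompat} for the "only one triple is affected" claim --- that lemma says nothing of the sort --- but you supply the correct justification (strand trajectories outside the local gadget are unchanged, and each pair among the three local strands crosses once in both configurations) later in the same paragraph, so the argument stands.
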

	
\begin{proof} Applying the relation $ss \mapsto 1$ will shorten the length of a word. Applying the relation $s_\ell s_{\ell+1} s_\ell \mapsto s_{\ell+1} s_\ell s_{\ell+1}$ will affect
precisely one triple $(i|j|k)$, and will send it to something smaller in $\le_3$. \end{proof}

\begin{thm} \label{thm:sink} The oriented graph $\bar{X}_w$ has a unique sink, which is a reduced expression. \end{thm}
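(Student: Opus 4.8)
The plan is to reduce the claim about non-reduced expressions to the Manin--Schechtmann result about reduced expressions (Corollary \ref{cor:MS} / Proposition \ref{prop:MS}), by first showing that every sink of $\bar{X}_w$ must be a reduced expression, and then invoking the uniqueness of the sink of $\bar{\G}_w$. First I would argue that a sink of $\bar{X}_w$ is reduced: if $\un{w}$ is not reduced, then it has length strictly greater than $\ell(w)$, so some triple of strands $(i|j|k)$ has $\un{w}_{ijk}$ of length greater than the corresponding permutation's length in $S_3$; more directly, a non-reduced expression contains two crossings of the same type $(i|j)$, hence somewhere in the crossing sequence a pair $(i|j)$ is repeated. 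I need to show that from this one can always find a length-decreasing move, i.e. an instance of $s_\ell s_\ell$ as a (commutation-equivalent) subword. The cleanest route: if $\un{w}$ is not reduced, pick two crossings of the same type $(i|j)$ that are \emph{adjacent} among all crossings of type $(i|j)$ in the crossing sequence; using Theorem \ref{thm:MS}(2) applied to the triples $(h|i|j)$, $(i|j|\ell)$ for the various strands $h<i$, $\ell>j$ that cross in between, show no such intervening crossing can occur, so the two $(i|j)$-crossings are literally adjacent in $\un{w}$ up to commutation, giving a subword $s_i s_i$ and hence an outgoing edge. So a sink is reduced.

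Next, since $\bar{\G}_w$ is a subgraph of $\bar{X}_w$ (both as a graph and compatibly with the orientations of Definition \ref{def:orientXw}), a sink of $\bar{X}_w$ that happens to be reduced is in particular a vertex of $\bar{\G}_w$ with no outgoing $\bar{\G}_w$-edges among the braid moves $s_i s_{i+1} s_i \mapsto s_{i+1} s_i s_{i+1}$, hence a sink of $\bar{\G}_w$. By Corollary \ref{cor:MS} there is exactly one such sink of $\bar{\G}_w$ --- the reduced expression whose higher inversion set is the set of all full triples for $w$. Therefore $\bar{X}_w$ has at most one sink. For existence, start with any expression and repeatedly apply oriented edges; the partial order $\le$ of Definition \ref{def:partial} satisfies the DCC by Lemma \ref{lem:itscompat} and respects the orientation by the preceding lemma, so this process terminates at a sink. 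This gives existence and uniqueness.

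I expect the main obstacle to be the first step: proving that a non-reduced expression always admits an outgoing edge, i.e. that one cannot get ``stuck'' with a repeated crossing type but no available $s_\ell s_\ell$ subword after commutations. The subtlety is that the two repeated $(i|j)$-crossings may be far apart in $\un{w}$ with many crossings between them that cannot simply be commuted past, and one must rule this out combinatorially. The key is a minimality argument analogous to the one in the proof of Proposition \ref{prop:MS2}: among all pairs of same-type crossings, choose one with a minimal-length connecting subword, and show that any intervening crossing $(x|y)$ either commutes past (contradicting minimality, after reducing to a shorter configuration) or forms, together with one of the two $(i|j)$-crossings, a triple whose induced order is neither lexicographic nor antilexicographic --- which, by Theorem \ref{thm:MS}(2) read for non-reduced expressions (the relevant statement being that any three consecutive crossings sharing enough strands must locally look like a reduced expression of $S_3$), is impossible, or else forms a full triple of smaller ``length'' to which induction applies. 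Once this structural lemma is in hand, the rest of the argument is formal.
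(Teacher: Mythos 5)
Your second step (a reduced sink of $\bar{X}_w$ is a sink of $\bar{\G}_w$, hence unique by Corollary \ref{cor:MS}, with existence from the DCC) is fine and matches the paper. The gap is in the first step. The structural claim you propose --- that in a non-reduced expression one can choose two crossings of the same type $(i|j)$ and show no crossing intervenes between them, so that $s_\ell s_\ell$ appears as a subword up to commutations --- is false. Take $w=t$ in $S_3$ and $\un{w}=ststs$: its crossing sequence is $(1|2),(1|3),(2|3),(1|2),(1|3)$, every pair of same-type crossings is separated by crossings sharing a strand with it, and $S_3$ has no commutation moves at all, so no $ss$ or $tt$ subword can be created. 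The class $[ststs]$ is indeed not a sink, but only because of an outgoing \emph{braid} edge $sts\mapsto tst$; your argument, which insists on producing a quadratic edge, cannot see this. Relatedly, the tool you invoke to exclude intervening crossings, Theorem \ref{thm:MS}(2), is a statement about reduced expressions only; the ``non-reduced reading'' you posit (three crossings on a common triple must locally look like a reduced $S_3$-word) is exactly what fails for words like $ststs$, and finding a usable substitute is the actual content of the theorem. Your fallback clause (``a full triple of smaller length to which induction applies'') is not developed enough to fill this: you would need to show that whenever the repeated crossings cannot be brought together, some lexicographically ordered full packet is flippable in the oriented direction, and that is a new argument rather than a citation of Manin--Schechtmann.

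The paper circumvents precisely this difficulty by a different reduction: it proves ``non-reduced $\Rightarrow$ not a sink'' by induction on the length of $\un{w}$, first disposing of the case where some proper subword is non-reduced (induction) or reduced but not the Manin--Schechtmann sink of its class (Proposition \ref{prop:MS}), so that one may assume every proper subword is the MS sink. Writing $\un{w}=s_i\un{y}s_j$, the exchange condition then forces $s_i$ and $s_j$ to cross the same pair of strands, and an analysis of full triples against the antilexicographic property of the sinks $s_i\un{y}$ and $\un{y}s_j$ forces $i=j$ and forces every letter of $\un{y}$ to commute with $s_i$, so that $\un{w}$ is commutation-equivalent to $\un{y}s_is_i$. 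Only in that restricted situation does an $s_is_i$ subword genuinely appear; in your general setting it need not, and the outgoing edge must sometimes be a braid move. To repair your proof you would either follow some such subword reduction, or prove from scratch the dichotomy ``either $s_\ell s_\ell$ appears up to commutation, or some full packet in lexicographic order is flippable,'' which amounts to a non-reduced extension of Proposition \ref{prop:MS2} that the paper does not claim and you have not supplied.
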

	
\begin{proof} Suppose that the equivalence class $[\un{w}]$ of $\un{w}$ is a sink. If $\un{w}$ is a reduced expression, then $[\un{w}]$ must be the unique sink of $\bar{\G}_w$ by Proposition \ref{prop:MS}. Thus we suppose that $\un{w}$ is not reduced, and derive a contradiction using the following lemma. \end{proof}
	
\begin{lemma} Suppose that $\un{w} \in X_w$ is not reduced. Then $[\un{w}]$ is not a sink.
\end{lemma}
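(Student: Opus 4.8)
We argue by contraposition: assuming $\un{w}$ is not reduced, we exhibit an outgoing edge at $[\un{w}]$, namely a word obtained from $\un{w}$ by the commuting relations $s_i s_j = s_j s_i$ which contains a subword $s_a s_a$ or a subword $s_a s_{a+1} s_a$ (only the latter orientation of a braid subword produces an edge). We work with wiring diagrams. Since braid relations preserve each $n_{ij}$, since each pair of strands in $I(w)$ is crossed an odd number of times and every other pair an even number, and since $\sum_{i<j} n_{ij}$ is the length of $\un{w}$ while $|I(w)| = \ell(w)$, one sees that $\un{w}$ is reduced if and only if no two strands cross more than once, if and only if every trace $\un{w}_{ijk}$ is reduced. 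So a non-reduced $\un{w}$ has a \emph{bad} pair of strands $\{\alpha,\beta\}$, crossing at least twice, and then $\un{w}_{ijk}$ is non-reduced for every triple $(i|j|k)$ containing it.

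The local model is $S_3$: a non-reduced word in $s_1, s_2$ contains a subword $s_1 s_1$, $s_2 s_2$, or $s_1 s_2 s_1$. (A word that is not square-free contains $s_1 s_1$ or $s_2 s_2$; a square-free word is alternating, and every alternating word of length at most $3$ is reduced, so a non-reduced alternating word has length at least $4$ and spells $s_1 s_2 s_1$ in positions $1,2,3$ or $2,3,4$.) Hence the pattern we are after already appears \emph{inside} the trace $\un{w}_{ijk}$.

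What remains --- and what I expect to be the main obstacle --- is to \emph{lift} this pattern to a single commuting-representative of $\un{w}$: two crossings of a pair that are consecutive within a trace may be separated, in $\un{w}$, by crossings of a single strand of the triple with outside strands, which in general do not commute away. The plan is a minimality argument. Among all bad pairs $\{\alpha,\beta\}$ and all pairs $c_1 \prec c_2$ of consecutive $\{\alpha,\beta\}$-crossings, pick one minimizing the number $d$ of crossings of $\un{w}$ strictly between $c_1$ and $c_2$. If $d = 0$, the two crossings are adjacent and are both the generator $s_a$, where $a, a+1$ are the positions then holding $\alpha$ and $\beta$, so $\un{w}$ contains $s_a s_a$. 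If $d \ge 1$ and no intervening crossing meets $\alpha$ or $\beta$, then $\alpha$ and $\beta$ never move, every intervening crossing is an $s_b$ with $|b - a| \ge 2$ and hence commutes with $c_1 = c_2 = s_a$, and sliding them together yields a commuting-representative containing $s_a s_a$. Otherwise, let $c'$ be the first intervening crossing meeting, say, $\alpha$; it crosses $\alpha$ with a third strand $\gamma$, which is thereby dragged between $\alpha$ and $\beta$. Since $\alpha$ and $\beta$ are adjacent at $c_2$, the strand $\gamma$ must re-cross $\alpha$ or $\beta$ before $c_2$; a re-crossing of $\alpha$ would exhibit a bad pair $\{\alpha,\gamma\}$ with a strictly smaller gap, contradicting minimality, so $\gamma$ crosses $\beta$ --- exactly once, again by minimality. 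Tracing $\{\alpha,\beta,\gamma\}$, the crossings $c_1$, $c'$, the $\gamma\beta$-crossing, and $c_2$ are consecutive in $\un{w}_{\alpha\beta\gamma}$ and spell $s_1 s_2 s_1 s_2$ up to a left--right reflection, so a length-three factor of them is $s_1 s_2 s_1$; one then wants these three crossings to be brought together as an adjacent $s_a s_{a+1} s_a$ in $\un{w}$. Making this last step rigorous --- controlling the crossings that lie between them in $\un{w}$, which may themselves drag still further strands into the picture --- is the delicate part; I would handle it by iterating the same minimality bookkeeping, peeling off one strand at a time (in effect, an induction on $d$). A second route, via the exchange condition and Matsumoto's theorem applied to a shortest non-reduced prefix of $\un{w}$, meets the parallel difficulty of forcing every braid move of the resulting transformation to be descending in $\bar{X}_w$.
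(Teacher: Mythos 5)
Your reduction to the existence of a bad pair of strands, and the first two cases of your minimality argument ($d=0$, and the case where no intervening crossing meets $\alpha$ or $\beta$), are correct. But the third case --- which is the generic one --- is left as a plan rather than a proof, and the step you defer (bringing the three crossings together as an adjacent $s_a s_{a+1}s_a$ by ``iterating the minimality bookkeeping'') is precisely the hard content of the lemma. The obstruction is real: the crossings lying in $\un{w}$ between $c_1$, $c'$ and the $\gamma\beta$-crossing need not commute with them even when they avoid $\alpha$ and $\beta$, because they may involve $\gamma$ and a fourth strand at positions adjacent to $a+1$; controlling those drags still more strands into the picture, and you give no argument that the peeling process terminates in a configuration where the required commuting moves exist. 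There is also a secondary issue you pass over: the edge $s_a s_{a+1} s_a \mapsto s_{a+1}s_a s_{a+1}$ is oriented, so after assembling three adjacent crossings you must check that the resulting subword is the outgoing one $s_a s_{a+1} s_a$ and not $s_{a+1} s_a s_{a+1}$; the observation that one of the two length-three factors of $s_1s_2s_1s_2$ is $s_1s_2s_1$ does not by itself tell you that the factor you can actually assemble is that one.

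The paper avoids this entire difficulty by a different induction, and comparing the two shows why the gap is essential. It inducts on the length of $\un{w}$ and reduces to the case where every proper subword of $\un{w}$ is reduced \emph{and} is the Manin--Schechtmann sink of its reduced expression graph (otherwise a proper subword already carries an outgoing edge, by the inductive hypothesis or by Proposition \ref{prop:MS}). Writing $\un{w}=s_i\un{y}s_j$, the exchange condition forces $s_i$ and $s_j$ to cross the same pair of strands $(j|j+1)$; the antilexicographic-packet characterization of the sink (Corollary \ref{cor:MS}), applied to the two reduced subwords $s_i\un{y}$ and $\un{y}s_j$, then rules out every full triple containing that pair, which forces $i=j$ and $\un{y}\in S_{i-1}\times S_1\times S_1\times S_{m-i-1}$. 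Every letter of $\un{y}$ then genuinely commutes with $s_i$, so $\un{w}$ is equivalent to $\un{y}s_is_i$ and the edge $s_is_i\mapsto 1$ finishes. In other words, the termination/commutation control you would need to invent is exactly what Manin--Schechtmann theory supplies; to complete your route you would either have to prove an analogue of that control from scratch or invoke Corollary \ref{cor:MS} as the paper does.
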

	
\begin{proof} We show this by induction on the length of $\un{w}$. The base case is trivial. If any proper subword of $\un{w}$ is not a reduced expression, then by induction it is not a
sink. Hence (using the compatibility of the orientation with concatenation), $\un{w}$ is also not a sink. Thus we can assume each subword of $\un{w}$ is a reduced expression. Moreover,
using Proposition \ref{prop:MS}, we can assume that each subword of $\un{w}$ is (a representative of the equivalence class of) the unique sink in its reduced expression graph.

So suppose $\un{w} = s_i \un{y} s_j$, for some reduced expression $\un{y}$ for $y \in S_n$. Both $s_i \un{y}$ and $\un{y} s_j$ are reduced. By the exchange condition for the symmetric
group (see \cite[\S 1.7]{HumpCox}), we see that $s_i$ and $s_j$ must cross the same pair of strands $(j|j+1)$. In particular, $y(j) = i$ and $y(j+1) = i+1$. Therefore, $y s_j = s_i y$ and
$y = w$.
\[ {
\labellist
\small\hair 2pt
 \pinlabel {$s_j$} [ ] at 1 18
 \pinlabel {$\un{y}$} [ ] at 1 45
 \pinlabel {$s_i$} [ ] at 1 72
\endlabellist
\centering
\ig{1.3}{ybusiness}
} \]
(The reader who prefers a non-trivial base case will content themself with the case where $\un{y}$ is the empty word, and $i=j$.)

Suppose that $k > j+1$. We claim that $y(k) > i+1$. Otherwise, $y(k) < i$ and the triple $(j|j+1|k)$ is full in $ys_j$. However, the reduced expression $\un{y} s_j$ is the sink in its reduced expression graph, so by Corollary \ref{cor:MS} the packet of $(j|j+1|k)$ must appear in antilexicographic order. However, $(j|j+1)$ appears first, a contradiction.

Suppose that $k < j$. We claim that $y(k) < i$. Otherwise, $y(k) > i+1$ and the triple $(k|j|j+1)$ is full in $s_i y$. However, the reduced expression $s_i \un{y}$ is the sink in its reduced expression graph, so by Corollary \ref{cor:MS} the packet of $(k|j|j+1)$ must appear in antilexicographic order. However, $(j|j+1)$ appears last, a contradiction.

By counting, we see that the number of strands to the right of $j+1$ (resp. the left of $j$) must equal the number of strands to the right of $i+1$ (resp. the left of $i$). Thus $i=j$.
Then $y$ is some permutation in $S_{i-1} \times S_1 \times S_1 \times S_{m-i-1} \subset S_m$. Every simple reflection $s_k$ in the reduced expression $\un{y}$ satisfies $k \ne i \pm 1$,
and thus commuting braid relations will bring $\un{w}$ to the form $\un{y} s_i s_i$.

Finally, the oriented edge $s_i s_i \mapsto 1$ indicates that $\un{w}$ is not a sink.
\end{proof}

\begin{remark} The proof of this lemma actually gives an algorithm for taking a non-reduced expression and shortening it, using only the moves $s_i s_{i+1} s_i \mapsto s_{i+1} s_i
s_{i+1}$, $s_i s_j = s_j s_i$, and $s_i s_i \mapsto 1$. Namely, look at some non-reduced subword $\un{w} = s_i \un{y} s_j$ where $s_i \un{y}$ and $\un{y} s_j$ are reduced. Shorten
$\un{y}$ as much as possible by using commuting braid relations to bring simple reflections to the left past $s_i$ or the right past $s_j$. If $\un{y}$ is empty, then $i=j$ and $s_i
s_i\mapsto 1$ suffices. Otherwise, either some $k>j+1$ has $y(k) < i$, in which some packet of $\un{y} s_j$ is flippable, or some $k < j$ has $y(k) > i+1$, in which case some packet of
$s_i \un{y}$ is flippable. Repeat this operation for the win. \end{remark}

%========================
\section{The diamond lemma for monoidal categories}
\label{sec:monoidalcats}
%========================

A natural setting wherein the set of expressions modulo equivalence $\bar{X}_w$ is more natural than the set of expressions $X_w$ is the setting of monoidal categories. This shall be
explained further below. Thus, we seek a diamond lemma for monoidal categories.

It would be very surprising if this had not been done before, but we could not find it. See \cite{DotKho} for another extension of Gr\"obner bases to a more complicated setup.

{\bf Addendum}. Diamond lemmas for monoidal categories have appeared in the literature, see the references in \cite{Dupont}.

%========================
\subsection{Monoidal categories}
\label{subsec:monoidal}
%========================

We turn our attention to $\Bbbk$-linear (strict) monoidal categories. These are equipped with an additional structure, \emph{horizontal composition} $\ot$, which is also
$\Bbbk$-bilinear like ordinary composition (which we call \emph{vertical composition}). String diagrams are an effective tool for describing elements of a strict monoidal category. See
\cite{LauDiagrams,Marsden} for more background.

The compatibility laws between horizontal and vertical multiplication in a monoidal category state that, when $f \co n \to n'$ and $g \co m \to m'$, one has $(f \ot \id_{m'}) \circ
(\id_{n} \ot g) = (f \ot g) = (\id_{n'} \ot g) \circ (f \ot \id_m)$ as maps $n \ot m \to n' \ot m'$. The effect of this law on string diagrams is to state that two string diagrams which
are equivalent under \emph{rectilinear isotopy} represent morphisms which are equal.

\[{
\labellist
\tiny\hair 2pt
 \pinlabel {$f$} [ ] at -3 22
 \pinlabel {$g$} [ ] at 8 9
 \pinlabel {$f$} [ ] at 32 16
 \pinlabel {$g$} [ ] at 45 16
 \pinlabel {$f$} [ ] at 68 9
 \pinlabel {$g$} [ ] at 80 22
 \pinlabel {$=$} [ ] at 22 14
 \pinlabel {$=$} [ ] at 59 14
\endlabellist
\centering
\ig{1}{stringequal}
}\]

One can present a monoidal algebra by generators and relations. One thinks of the generating morphisms as elementary string diagrams, and relations as local rules which can be applied to
any subdiagram of a diagram.

\begin{example} \label{ex:Scat2} The category $\SymCat$ from Example \ref{ex:Scat} is monoidal. Horizontal composition is induced by the map $S_n \times S_m \to S_{n + m}$, which gives a map $\Bbbk[S_n] \ot \Bbbk[S_m] \to \Bbbk[S_{n+m}]$.

Its presentation as a monoidal category is much simpler than its presentation as a $\Bbbk$-linear category. There is one generating object $1 \in \NM$, and one generating morphism $\s
\in \End(2)$, drawn as a crossing. There are two relations, corresponding to \eqref{eq:s2} and \eqref{eq:sisjsi}: $\s \s = \id_2$, and $(\s \ot \id_1) \circ (\id_1 \ot \s) \circ (\s \ot
\id_1) = (\id_1 \ot \s) \circ (\s \ot \id_1) \circ (\id_1 \ot \s)$ in $\End(3)$. These relations are drawn as follows. \begin{equation} \label{eq:symcatrel} {
\labellist
\small\hair 2pt
 \pinlabel {$=$} [ ] at 15 21
 \pinlabel {$=$} [ ] at 87 21
\endlabellist
\centering
\ig{1}{symcat}
} \end{equation} By convention, when discussing a morphism in $\End(2)$ we write $\s$, but when discussing a morphism in $\End(3)$ we write $s = \s \ot \id_1$ and $t = \id_1 \ot \s$, so that the second relation is $sts = tst$. We also write $s, t, u$ for the diagrams corresponding to the three simple reflections in $S_4$, viewed as elements of $\End(4)$. For example, the equality $su = us$ is a consequence of rectilinear isotopy, so is not needed as a relation. \end{example}

A presentation of a monoidal category does give rise to a presentation of its endomorphism algebras as ordinary categories. For instance, one can recover the Coxeter presentation of
$S_n$ from the presentation above. The point of this discussion is that, for many monoidal categories, the monoidal presentation is adapted to a variant of the diamond lemma,
while the ordinary presentation is not adapted to the original diamond lemma.

%========================
\subsection{The diamond lemma for monoidal presentations}
\label{subsec:diamondmonoidal}
%========================

We seek a version of Bergman's diamond lemma which functions for monoidal presentations of $\Bbbk$-linear monoidal categories. Thankfully, Bergman's original proof functions almost
verbatim once one changes the notation, replacing ``linear'' descriptions of concatenation with ``planar'' ones. One replaces monomials with isotopy classes of diagrams, and replaces
concatenation (the composition of words) with plugging diagrams into each other (the composition of diagrams).

Let $\CC$ be a strict monoidal category with generating objects $\NC$, generating morphisms $\SC$ and local relations $\RC$. In particular, the objects of $\CC$ are words in $\NC$, but
the objects will not play a major role in this discussion. Let $X$ denote the set of all diagrams built from the generating morphisms $\SC$, and $X(n,m)$ the diagrams with source $n$ and
target $m$, for words $n, m$ in the alphabet $\NC$. Let $\bar{X}$ denote the quotient of $X$ by the equivalence relation given by rectilinear isotopy.

Let us fix a partial order $\le$ on $\bar{X}$. We assume that $\bar{X}(n,m)$ and $\bar{X}(n',m')$ are incomparable unless $n=n'$ and $m = m'$, i.e. the partial order \emph{respects
morphism spaces}. It is a \emph{monoidal partial ordering} if the following condition holds: if $\phi$ is obtained by plugging $B$ into a larger diagram, and $\psi$ is obtained by
plugging $B'$ into the same larger diagram, and $B < B'$, then $\phi < \psi$. This is equivalent to saying that $\le$ respects vertical concatenation on either top or bottom (c.f. the
previous notion of a semigroup partial order), and respects horizontal concatenation on either left or right with the identity map of any object. We say that $\le$ is \emph{compatible
with $\RC$} if every relation $r \in \RC$ can be rewritten as an equation $W_r = f_r$, where $W_r \in \bar{X}$, and $f_r$ is a $\Bbbk$-linear combination of various $A \in \bar{X}$
satisfying $A < W_r$. In the statement of the diamond lemma, we also need that $\le$ satisfies the DCC.

Given such a partial order, we can define \emph{elementary resolutions}, \emph{resolutions}, and \emph{irreducible} elements of $\bar{X}$, just as in \S\ref{subsec:originaldiamond}. We
can also define \emph{ambiguities} $(A,W_r,W_s)$, which appear when a diagram $A$ has two distinct subdiagrams $W_r$ and $W_s$ for $r, s \in \RC$, or the same subdiagram when $W_r =
W_s$ and $r \ne s$. This leads to two possible elementary resolutions of $A$, and the ambiguity is \emph{resolvable} if they can be jointly resolved to the same linear combination of
isotopy classes of diagrams. One possibility is that the subdiagrams $W_r$ and $W_s$ are \emph{disjoint}, meaning that they do not intersect in the plane. As before, disjoint
ambiguities can be easily jointly resolved.

There are a vast number of ways in which non-disjoint ambiguities can overlap! After all, there are many interesting ways in which two simply-connected domains in the plane can
intersect (as in the picture below), and there is some crazy monoidal category with an ambiguity of each kind. \igc{1}{crazy} It is not worth classifying how diagrams can meet (as we
did in the linear case, e.g. overlap vs. inclusion), as every overlap will be treated in an identical way, regardless of how complicated the picture is.

However, there is a major complication that comes with planar diagrams rather than linear words. In the original setting there are certain \emph{minimal} examples of each ambiguity,
which give rise to all possible ambiguities. For example, when $W_r = BC$ and $W_s = CD$ overlap, then the minimal ambiguity $BCD$ appears inside every other ambiguity $LBCDM$.
Consequently, if the minimal ambiguity is resolvable, then so is any overlap ambiguity of this type. For general monoidal categories, it is far less easy to find a collection of minimal
ambiguities which account for every possible ambiguity! In the picture above, any diagram with the appropriate boundary could go in the ``gaps'' between the subdiagrams $W_r$ and $W_s$. Let us give a relevant example which illustrates the difficulty.

\begin{example} \label{ex:ststsBad} Consider the presentation of $\SymCat$ in Example \ref{ex:Scat2}, and suppose we plan to apply the relations in the direction $\s\s \mapsto 1$ and $sts
\mapsto tst$. There is the potential that $st{\bf s}$ overlaps with ${\bf s}ts$ in the bold copies of $s$, but the ways in which this overlap can occur can be complicated, because the ``loose ends'' in the relations can interact with other diagrams. \begin{equation} \label{eq:stsstsoverlap1} {
\labellist
\small\hair 2pt
 \pinlabel {$A = $} [ ] at -15 50
 \pinlabel {$B$} [ ] at 51 50
\endlabellist
\centering
\ig{1}{stsstsoverlap1}
} \end{equation} In
this picture, $B$ can be any diagram with the appropriate boundary. There is no single minimal diagram which contains both copies of $sts$, and is contained in any such ambiguity $A$!
Nonetheless, below we will find two ``minimal ambiguities'' associated with this overlap which, if resolvable, imply that any such overlap is resolvable. \end{example}

\begin{example} \label{ex:sstsNotBad} Sometimes life is not so bad. Any ambiguity of the form $(A, s{\bf s}, {\bf s}ts)$, where the bold copies of $s$ overlap, will contain the
following subdiagram, because all loose ends are accounted for. \igc{1}{sstsoverlap} This ambiguity $(ssts, s{\bf s}, {\bf s}ts)$ is therefore a minimal such ambiguity. \end{example}

We say that a family $\{(A_i, W_r, W_s)\}$ is a \emph{minimal set of ambiguities} if, when they are all resolvable, every ambiguity is resolvable. The word ``minimal'' should be taken with a grain of salt: for example, one could choose the set of all ambiguities. Typically, though, one can do better.

\begin{thm}[Diamond Lemma for monoidal categories] \label{thm:MDL} Let $\CC$ be a $\Bbbk$-linear monoidal category for a commutative base ring $\Bbbk$, presented by generators and
relations. Suppose that one has a partial order $\le$, respecting the morphism spaces, which is a monoidal ordering, is compatible with $\RC$, and satisfies the DCC. Then the irreducible
diagrams $\bar{X}_{\irr}$ form a basis for $\CC$ if and only if each ambiguity is resolvable, if and only if each ambiguity is resolvable relative to $\le$. If there is a minimal set of ambiguities for each type of ambiguity, then these conditions are equivalent to all the minimal ambiguities being resolvable, or resolvable relative to $\le$. If these conditions are true, we say that the data of the presentation and the partial order is \emph{monoidal Bergman type}. \end{thm}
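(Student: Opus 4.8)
The plan is to reduce Theorem \ref{thm:MDL} to the original Bergman diamond lemma (Theorem \ref{thm:BDL}) by an honest translation of notation, and then to isolate the one genuinely new point: the passage from "all ambiguities resolvable" to "minimal ambiguities resolvable" does not work formally in the planar setting, so it must be hypothesized rather than derived. Concretely, I would first observe that a $\Bbbk$-linear strict monoidal category presented by $(\NC, \SC, \RC)$ has, for each pair of objects $(n,m)$, a morphism space $\Hom(n,m)$ which is the quotient of the free $\Bbbk$-module on $\bar{X}(n,m)$ (isotopy classes of string diagrams) by the $\Bbbk$-submodule $I(n,m)$ spanned by all diagrams of the form "plug $W_r - f_r$ into a diagram with the right boundary." The key structural fact is that this submodule is spanned by such local substitutions; this is exactly the planar analogue of the statement in \S\ref{subsec:originaldiamond} that the ideal $I$ is spanned by $ArB$. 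So $\bigoplus_{n,m} \Hom(n,m)$ is, as a $\Bbbk$-module, the quotient of the free module on $\bar{X}$ by the span of local relation-substitutions.

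Next I would verify that Bergman's proof of Theorem \ref{thm:BDL} goes through verbatim with "monomial / word" replaced by "isotopy class of diagram," "subword" replaced by "subdiagram," "concatenation $ArB$" replaced by "plugging $r$ into a larger diagram," and "semigroup partial order" replaced by "monoidal partial order." The three hypotheses — monoidal ordering, compatibility with $\RC$, DCC — are precisely what Bergman's argument uses: the DCC guarantees that every diagram resolves to a finite $\Bbbk$-linear combination of irreducibles (so $\bar{X}_{\irr}$ spans $\CC$), and the compatibility with a monoidal ordering is what makes the elementary resolutions lower a well-founded quantity and makes "resolvable relative to $\le$" a meaningful inductive hypothesis. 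The core of Bergman's argument — showing that if all ambiguities are resolvable then the resolution of any diagram into irreducibles is well-defined, i.e. that $I(n,m)$ is exactly the kernel of the "resolve to irreducibles" map — is a diamond/confluence argument (Newman's lemma style) that cares only about: (a) the poset structure on diagrams, (b) that two resolutions of a diagram differ locally, and (c) that locally-differing resolutions can be brought together, which is the content of "disjoint ambiguities resolve trivially" plus "overlapping ambiguities resolve by hypothesis." None of this uses the linear (as opposed to planar) structure of the ambient combinatorics. I would also note that "resolvable" and "resolvable relative to $\le$" are equivalent here for the same reason as in the linear case (Definition \ref{def:relative} and the discussion after it transcribe directly), since $I_{<A}$ makes sense for diagrams.

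Finally, I would address the one place where the planar setting genuinely differs: the reduction to \emph{minimal} ambiguities. In the linear case, every overlap ambiguity $LBCDM$ literally contains the minimal ambiguity $BCD$ as a subword, and resolvability of a sub-ambiguity implies resolvability of any ambiguity containing it (because an elementary resolution applied to $BCD$ inside $LBCDM$ is, by the semigroup property, the corresponding resolution of the big word). The planar analogue of "resolvability propagates upward from a sub-ambiguity" still holds — this is exactly where the monoidal ordering hypothesis is used, since plugging a resolvable sub-diagram into a bigger diagram is compatible with resolution. What fails is the \emph{existence} of a finite collection of sub-ambiguities through which every ambiguity factors; Example \ref{ex:ststsBad} shows there may be no single minimal diagram containing a given overlap. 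So for the last sentence of the theorem I would simply take "there is a minimal set of ambiguities for each type" as the hypothesis (as the statement does), and the implication is then immediate: if each member of a minimal set is resolvable, then by definition every ambiguity is resolvable, so we are back in the first case. The equivalence with "resolvable relative to $\le$" for the minimal set follows because for each fixed ambiguity the two notions of resolvability agree.

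I expect the main obstacle to be purely expository rather than mathematical: convincing the reader (and oneself) that Bergman's confluence argument really is insensitive to the word-vs-diagram distinction, which requires being careful that "two resolutions of a diagram $A$ differ by resolutions supported on the locus where they disagree" still makes sense when the disagreement locus is a planar region rather than an interval of a word. The honest check is that the only combinatorial inputs to Bergman's proof are (i) diagrams form a set with a partial order satisfying DCC, (ii) there is a notion of "plugging a small diagram into a big one" compatible with $\le$ and with taking $\Bbbk$-linear combinations, and (iii) the relations are given in the compatible form $W_r \mapsto f_r$ — and all three are guaranteed by the hypotheses. Once that is granted, nothing else in the argument is special to words, and the theorem follows. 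I would therefore present the proof as: (1) set up $\Hom(n,m)$ as a quotient of the free module on $\bar{X}(n,m)$; (2) observe the hypotheses are exactly the planar transcription of Bergman's; (3) invoke that Bergman's proof transcribes, giving the first two equivalences; (4) dispatch the minimal-ambiguity clause from the \emph{definition} of a minimal set together with upward propagation of resolvability, which is where the monoidal ordering is used.
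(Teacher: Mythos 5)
Your proposal is correct and follows essentially the same route as the paper, whose proof simply states that Bergman's argument for \cite[Theorem 1.2]{Bergman} translates almost verbatim after replacing words with isotopy classes of diagrams; your additional observations (that the minimal-ambiguity clause is immediate from the definition of a minimal set, and that the genuinely new issue is the absence of an automatic finite minimal set) match the paper's surrounding discussion rather than constituting a different method.
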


\begin{proof} The proof of \cite[Theorem 1.2]{Bergman} translates almost verbatim to this setting, replacing $X$ with $\bar{X}$, words with diagrams, etcetera. \end{proof}

The main difference between Theorems \ref{thm:BDL} and \ref{thm:MDL} is the lack of any automatic and reasonably small minimal set of ambiguities. This makes the adaptation
to any given setting somewhat trickier than the original Bergman Diamond Lemma, which can be applied almost without thinking (although with the caveat that one must first find a partial
order $\le$). The point of the diamond lemma is that it enables one to check if $\bar{X}_{\irr}$ is a basis with only a finite amount of work, and until one finds finitely many minimal
ambiguities, the advantage of the diamond lemma is not really present. So there is one additional piece of work which needs to be done for a monoidal presentation: find a reasonably
small minimal set of ambiguities for each possible overlap.

%========================
\section{Applications to Hecke-type categories}
\label{sec:hecketype}
%========================

Let us give a concrete application of Theorem \ref{thm:MDL}, where a small minimal set of ambiguities is found. 

%========================
\subsection{Applications to the symmetric group}
\label{subsec:sym}
%========================

\begin{lemma} \label{lem:Sambenough} For the presentation of $\SymCat$ in Example \ref{ex:Scat2}, the list below gives a minimal set of ambiguities. \begin{equation} \label{eq:Samb} \ig{1.3}{Snoverlap} \end{equation} \end{lemma}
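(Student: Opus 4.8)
The strategy is to run through the (very short) list of ways two left-hand sides can overlap and, for each, either exhibit one of the listed diagrams as the unavoidable ``core'' of the overlap or, where no single core exists, to reduce to the listed diagrams by a hands-on argument. Recall that the presentation has exactly two relations, $\s\s\mapsto\id_2$ and the braid relation $sts\mapsto tst$; write $W_1$ for the double-crossing (bigon) and $W_2$ for the left braid diagram, and note that in an ambiguity both chosen subdiagrams are instances of $W_1$ or of the \emph{lex-oriented} braid $W_2$ (never of $tst$). There are no inclusion ambiguities: $W_1$ is not a subdiagram of $W_2$ (the two $s$-crossings of $W_2$ are separated by a $t$-crossing on a different pair of strands, which cannot be isotoped away), and $W_2$ is too large to fit inside $W_1$. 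Disjoint ambiguities are resolvable automatically (\S\ref{subsec:diamondmonoidal}). So it suffices to treat the three overlap types $(W_1,W_1)$, $(W_1,W_2)$, $(W_2,W_2)$, in each of which the two subdiagrams share at least one crossing.

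First I would dispose of the two rigid types. For $(W_1,W_1)$, two bigons that share a crossing occupy only two strands, so their union is exactly $\s^3=s_1s_1s_1$ with no free side-strands; hence every such ambiguity is a context $E$ plugged with $s_1s_1s_1$, and resolvability of that single diagram forces resolvability of all of them by the contextual-extension mechanism built into the proof of Theorem \ref{thm:MDL} (a joint resolution of $A_0$ extends to $E(A_0)$, remaining inside the monoidal order). For $(W_1,W_2)$, the shared crossing must be the bottom or the top $s$-crossing of $W_2$: it cannot be the middle $t$-crossing, since neither crossing adjacent to it \emph{inside} $W_2$ lies on the same pair of strands, so no bigon can contain it. If it is the bottom $s$, the union is $s_1s_1s_2s_1$; if the top $s$, the union is $s_1s_2s_1s_1$. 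Again all three strand-ends are accounted for, so every such ambiguity is a context applied to one of these two listed diagrams, and we conclude as before.

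The substance of the proof is the type $(W_2,W_2)$. Two distinct instances of $W_2$ share exactly one crossing $c$: sharing two would force them equal, since the first two crossings of a lex-oriented braid box determine the third, and a short case check (using that a crossing lies on a single pair of strand-positions) shows moreover that $c$ must be the top crossing of one box and the bottom crossing of the other, and that both boxes sit on a common window of three consecutive positions; say $c=D_1\text{-top}=D_2\text{-bottom}$ with $D_1$ below $D_2$. If nothing is ``stuck'' between $D_1$ and $D_2$, their union is the five-crossing three-strand diagram $s_1s_2s_1s_2s_1$, with no free side-strands, and we are done as before. In general, however, extra crossings can be trapped between $D_1$ and $D_2$ --- this is the phenomenon of Example \ref{ex:ststsBad} and \eqref{eq:stsstsoverlap1}: the two braid boxes cannot be drawn cleanly at the same time, and a ``gap'' diagram $B$ intervenes. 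The plan is to show that the two listed diagrams $s_1s_2s_1s_2s_1$ and $s_1s_2s_1s_3s_2s_1$ control this entire family: I would induct on the length of $B$, checking first that a single trapped crossing (necessarily of a type that does not commute past both boxes) produces, after commutation isotopies and up to left--right mirror symmetry, precisely a context applied to $s_1s_2s_1s_3s_2s_1$; and that a longer $B$ has a crossing adjacent to one of the boxes which either slides out into the context (leaving the ambiguity unchanged) or merges with a box crossing (strictly decreasing a suitable complexity measure, e.g.\ the number of crossings, via the bigon relation or by shortening a box).

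The step I expect to be the main obstacle is this last one: the braid--braid overlap with a nontrivial gap. It is the only case not handled by the automatic ``a contained minimal diagram resolves it'' principle, because the gap $B$ genuinely varies, and one must prove that $s_1s_2s_1s_3s_2s_1$ (with $s_1s_2s_1s_2s_1$) really does govern every member of the family. This is the diagrammatic counterpart of the triple/quadruple bookkeeping of \S\ref{subsec:inversions}, and the delicate point is to organize the slide-and-peel induction with a complexity measure that strictly decreases at each step. The remaining verifications in the lemma are finite and routine.
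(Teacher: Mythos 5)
Your treatment of the rigid overlaps ($\s\s\s$, $ssts$, $stss$, and the observation that a bigon cannot share the middle crossing of a braid box without destroying the box's contiguity) matches the paper's, and your reduction principle --- any ambiguity containing a listed diagram as a proper subdiagram is resolvable relative to $\le$ --- is exactly the mechanism the paper invokes. The gap is where you yourself flagged it: the braid--braid overlap with a nontrivial diagram trapped between the two boxes. Your proposed dichotomy (every crossing of the gap adjacent to a box either slides into the context or merges with a box crossing, strictly decreasing the number of crossings) is not exhaustive. Put the two boxes on positions $\{1,2,3\}$, so the trapped region $Z$ lives on positions $\{3,4,\dots\}$, and take $Z = s_3 s_4 s_3$, which is already reduced. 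Both extremal crossings of $Z$ are $s_3$'s; they involve position $3$, hence do not commute past the boxes' $s_2$-crossings, and there is no bigon to cancel and no box to shorten. The only available move is the braid relation $s_3 s_4 s_3 \mapsto s_4 s_3 s_4$ \emph{inside} $Z$, which preserves the crossing count, so your complexity measure does not decrease. (After that move the two $s_4$'s do slide out, leaving $stsuts$ --- but nothing in your induction licenses the move or guarantees termination for longer gaps in which the strand adjacent to the boxes is routed far away and back.)

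The paper closes this case with a different idea. Since both elementary resolutions of $A$ already consist of diagrams strictly below $A$, one may, working within $I_{<A}$, replace the trapped region $Z$ by \emph{any} expression for the same permutation --- in particular by a reduced expression of one's choosing. The elementary decomposition $S_k = S_{k-1} \cup S_{k-1}\, s\, S_{k-1}$, where $S_{k-1}$ fixes the strand adjacent to the boxes, produces a reduced expression $z_1 s^{\epsilon} z_2$ with $z_1, z_2$ not touching that strand; these commute past both boxes into the context, leaving either $ststs$ or $stsuts$. If you insist on your peeling induction, you must add ``apply a braid or commutation relation within $Z$'' as a third alternative and then supply a termination argument, which in effect re-derives this normal form (or invokes the partial order of Definition \ref{def:partial}). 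The wholesale replacement of $Z$ is the missing idea, and it makes the whole case a two-line argument.
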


\begin{proof} Consider an ambiguity $(A,B,C)$. Obviously, if a proper subdiagram $A'$ of $A$ contains both $B$ and $C$, then using the fact that $A' < A$ (because it has fewer crossings) we see that the difference between the two elementary resolutions lies within $I_{< A}$. As discussed in Example \ref{ex:sstsNotBad}, whenever the ambiguity has $B = s{\bf s}$ and $C = {\bf s}ts$, the corresponding ambiguity $ssts$ in \eqref{eq:Samb} is a proper subdiagram of $A$. The same is true for the ambiguities $\s \s \s$ and $stss$. All that remains is to discuss the ambiguities of the form in \eqref{eq:stsstsoverlap1}.
	
Let us rewrite such an ambiguity as follows.
\begin{equation} \label{eq:stsstsoverlap2} {
\labellist
\small\hair 2pt
 \pinlabel {$A = $} [ ] at -15 50
 \pinlabel {$Z$} [ ] at 51 50
 \pinlabel {$X$} [ ] at 51 79
 \pinlabel {$Y$} [ ] at 51 19
\endlabellist
\centering
\ig{1}{stsstsoverlap2}
} \end{equation}
Up to taking subdiagrams, we can assume that what appears in regions $X$ and $Y$ is trivial, and what appears in region $Z$ is an expression for a permutation in $S_k$ for some $k$.
Within the ideal $I_{<A}$ we can apply any known reductions to $Z$. More precisely, each diagram in the two elementary resolutions of $A$ will be strictly less than $A$, so that applying relations to the copy of $Z$ in each such diagram is adding an element of $I_{<A}$. Thus, we can assume inductively that $Z$ is a reduced expression, and even a reduced expression of our choice!

Any permutation in $S_k$ has a reduced expression where the first simple reflection $s_1$ appears either once or not at all. That is, $S_k = S_{k-1}
\cup S_{k-1} s_1 S_{k-1}$, where $S_{k-1}$ refers to the permutations of all but the first strand. Any permutation in $S_{k-1}$ may be assumed to be in region $X$ or $Y$; that is, up
to taking subdiagrams, we can assume that $Z$ is either the identity or is $s_1$. This leads to the two remaining minimal ambiguities in \eqref{eq:Samb}, $ststs$ and $stsuts$. \end{proof}

\begin{remark} To be very clear, not every diagram with two overlapping copies of $sts$ will have $stsuts$ as a subdiagram. After all, $Z$ can be any expression in $S_k$. The point is
that, if the ambiguities of \eqref{eq:Samb} are resolvable, then so is any other ambiguity. \end{remark}

Of course, the ambiguities in \eqref{eq:Samb} are resolvable. Because it will be useful to keep in mind, we demonstrate the resolvability of $stsuts$.
\begin{subequations}
\begin{equation} \label{eq:path1} {
\labellist
\small\hair 2pt
 \pinlabel {$\mapsto$} [ ] at 39 39
 \pinlabel {$\mapsto$} [ ] at 87 39
 \pinlabel {$\mapsto$} [ ] at 135 39
 \pinlabel {$\mapsto$} [ ] at 183 39
\endlabellist
\centering
\ig{1.3}{path1}
} \end{equation}
\begin{equation} \label{eq:path2} {
\labellist
\small\hair 2pt
 \pinlabel {$\mapsto$} [ ] at 39 39
 \pinlabel {$\mapsto$} [ ] at 87 39
 \pinlabel {$\mapsto$} [ ] at 135 39
 \pinlabel {$\mapsto$} [ ] at 183 39
\endlabellist
\centering
\ig{1.3}{path2}
} \end{equation}
\end{subequations}
Thus, one follows the graph $\bar{\G}_{stsuts}$ from source to sink along its two different paths, and confirms that the answer is the same.

However, to apply Theorem \ref{thm:MDL}, we still need the existence of a suitable partial order $\le$. We have already constructed this order in Definition \ref{def:partial}, and proven that $\bar{X}_{\irr} = \coprod \bar{X}_{w, \irr}$ has precisely one sink for each $w \in S_n$. Thus we have the following very unsurprising theorem.

\begin{thm} The Coxeter presentation is actually a presentation of $\SymCat$ as claimed. That is, $\End(n)$ has a basis $\{\un{w}\}$ indexed by $w \in S_n$, obtained by taking an arbitrary reduced expression $\un{w}$ for each $w$. \end{thm}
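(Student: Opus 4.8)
The plan is to invoke Theorem \ref{thm:MDL} directly, since all the ingredients have now been assembled. First I would take the monoidal presentation of $\SymCat$ from Example \ref{ex:Scat2}, with its single generating object $1$, single generating morphism $\s$, and the two relations $\s\s \mapsto \id_2$ and $sts \mapsto tst$. For the partial order, I would use the order $\le$ constructed in Definition \ref{def:partial} (extended across different permutations as in Remark \ref{rmk:partialextended}), which by Lemma \ref{lem:itscompat} is a monoidal partial ordering compatible with $\RC$ and satisfies the DCC. The hypotheses of Theorem \ref{thm:MDL} are thus met, so $\bar{X}_{\irr}$ is a basis of $\SymCat$ if and only if every minimal ambiguity is resolvable.

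Next, by Lemma \ref{lem:Sambenough}, the diagrams in \eqref{eq:Samb}, namely $\s\s\s$, $ssts$, $stss$, $ststs$, and $stsuts$, constitute a minimal set of ambiguities. Each of these is straightforward to check: $\s\s\s$ resolves because both reductions give $\s$; $ssts$ and $stss$ resolve because applying $\s\s\mapsto\id_2$ on either copy and then reducing yields $ts$ (resp.\ $st$, after the relevant braid move, hence the same element); and $ststs$ and $stsuts$ resolve by tracing out the two paths in the reduced expression graphs $\bar{\G}_{ststs}$ and $\bar{\G}_{stsuts}$ from source to sink, as displayed in \eqref{eq:path1} and \eqref{eq:path2}. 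So the presentation together with $\le$ is monoidal Bergman type, and $\bar{X}_{\irr}$ is a basis.

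Finally I would identify the irreducible diagrams. A diagram is irreducible precisely when it contains no subdiagram $\s\s$ and no subdiagram $sts$; restricting to $\End(n)$, these are exactly the reduced expressions that are sinks of $\bar{X}_w$ for the various $w \in S_n$. By Theorem \ref{thm:sink}, each $\bar{X}_w$ has a unique sink, which is a reduced expression; equivalently, $\bar{X}_{\irr} \cap \End(n)$ is in bijection with $S_n$. Choosing a representative reduced expression $\un{w}$ for this sink for each $w$ gives the claimed basis $\{\un{w}\}_{w \in S_n}$ of $\End(n)$.

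\textbf{Main obstacle.} Almost all the work has already been done in the preceding sections: the construction of a compatible DCC partial order (Definition \ref{def:partial} and Lemma \ref{lem:itscompat}), the determination of a finite minimal set of ambiguities (Lemma \ref{lem:Sambenough}), and the uniqueness of sinks (Theorem \ref{thm:sink}, resting on Manin--Schechtmann theory via Proposition \ref{prop:MS} and Corollary \ref{cor:MS}). The only remaining content is the bookkeeping that assembles these into an application of Theorem \ref{thm:MDL} and the verification of the five listed ambiguities, which is routine. So there is no genuine obstacle here; the theorem is, as the authors say, ``very unsurprising'' given the machinery.
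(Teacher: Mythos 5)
Your proposal follows essentially the same route as the paper: apply Theorem \ref{thm:MDL} with the partial order of Definition \ref{def:partial} (extended as in Remark \ref{rmk:partialextended}), use Lemma \ref{lem:Sambenough} for the minimal set of ambiguities, verify their resolvability, and identify the irreducible diagrams via Theorem \ref{thm:sink}. The only step you omit is the last one the paper makes explicit: the diamond lemma gives a basis consisting of one \emph{particular} reduced expression per $w$ (the sink of $\bar{X}_w$), whereas the theorem claims an \emph{arbitrary} reduced expression works; since the braid relations hold exactly in $\SymCat$, any two reduced expressions for the same $w$ are equal as morphisms, so the arbitrary choice yields the same set of elements --- add that one remark and your argument matches the paper's.
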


\begin{proof} Theorem \ref{thm:MDL} implies that one has a basis $\{\un{w}\}$, where $\un{w}$ is a very particular reduced expression for each $w$, namely the sink of $\bar{X}_w$. However, from here it is easy to see that any arbitrary reduced expression will do. \end{proof}

%========================
\subsection{Applications to modified symmetric categories}
\label{subsec:symvar}
%========================

This same exact proof generalizes to a broader class of presentations.

\begin{defn} Consider a $\Bbbk$-linear monoidal category $\CC$ with the following type of presentation, which we call a \emph{modified symmetric category}. It has one generating object $1$, and one generating morphism $\s \in \End(2)$, drawn as a crossing. The relations are:
\begin{subequations} \label{subeq:coxmod}
\begin{equation} \label{eq:s2mod} \s \s = \a \s + \b \id_2 \quad \textrm{ for some } \a, \b \in \Bbbk. \end{equation}
\begin{equation} \label{eq:R3mod} sts = tst + a st + b ts + c s + d t + e \id_3 \quad \textrm{ for some } a, b, c, d, e \in \Bbbk. \end{equation}
Here, $s = \s \ot \id_1$ and $t = \id_1 \ot \s$, both in $\End(3)$. Using diagrams, these are:
\begin{equation} \label{eq:s2modDiag}{
\labellist
\small\hair 2pt
 \pinlabel {$= \a$} [ ] at 20 14
 \pinlabel {$+ \b$} [ ] at 52 14
\endlabellist
\centering
\ig{1}{XXNPnopoly}
}, \end{equation}
\begin{equation} \label{eq:R3modDiag}{
\labellist
\small\hair 2pt
 \pinlabel {$=$} [ ] at 27 20
 \pinlabel {$+ a$} [ ] at 67 20
 \pinlabel {$+ b$} [ ] at 107 20
 \pinlabel {$+ c$} [ ] at 147 20
 \pinlabel {$+ d$} [ ] at 187 20
 \pinlabel {$+ e$} [ ] at 227 20
\endlabellist
\centering
\ig{1}{R3genNopoly}
}. \end{equation}
\end{subequations}
\end{defn}

\begin{prop} \label{prop:MDLforScat} Let $\CC$ be a modified symmetric category. Then $\CC$ has a basis indexed by $w \in S_n$, obtained by taking an arbitrary reduced expression for each $w$, if and only if each of the ambiguities in \eqref{eq:Samb} is resolvable. \end{prop}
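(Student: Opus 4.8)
The plan is to deduce this from the monoidal diamond lemma (Theorem \ref{thm:MDL}), essentially by transcribing the proof of the corresponding statement for $\SymCat$. The point is that passing from the Coxeter presentation to a modified symmetric category changes only the right-hand sides $f_r$ of the two defining relations, not their leading monomials $W_r = \s\s$ and $W_r = sts$, and the combinatorics of overlaps — hence both the choice of partial order and the minimal set of ambiguities — depend only on the $W_r$.

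First I would take $\le$ to be the partial order of Definition \ref{def:partial}, extended to all of $\bar X$ as in Remark \ref{rmk:partialextended} so that distinct morphism spaces are incomparable. By Lemma \ref{lem:itscompat} this is a monoidal partial order satisfying the DCC, so the only new thing to verify is compatibility with $\RC$. For \eqref{eq:s2mod}, $W_r = \s\s$ and $f_r = \a\s + \b\,\id_2$, and both $\s$ and $\id_2$ are strictly shorter than $\s\s$. For \eqref{eq:R3mod}, $W_r = sts$ and the only term of $f_r$ of the same length is $tst$; since $tst <_3 sts$, the substitution $sts \mapsto tst$ strictly decreases $\le$ by the same computation that shows $\le$ respects the orientation of Definition \ref{def:orientXw}, while the remaining terms $st, ts, s, t, \id_3$ are strictly shorter. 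Thus $(\SC,\RC,\le)$ satisfies the hypotheses of Theorem \ref{thm:MDL}.

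Next I would check that \eqref{eq:Samb} is still a minimal set of ambiguities. Because the irreducible words $W_r$ are unchanged, the analysis in the proof of Lemma \ref{lem:Sambenough} of \emph{which} subdiagrams occur in an overlap goes through verbatim: every $\s\s/\s\s$, $\s\s/sts$, or $sts/\s\s$ overlap contains one of $\s\s\s$, $ssts$, $stss$ as a subdiagram, and every $sts/sts$ overlap, after passing to subdiagrams, takes the form \eqref{eq:stsstsoverlap2}. The only step where the shape of the $f_r$ matters is the reduction of the middle region $Z$ to a chosen reduced expression. I would do this by downward induction on $\le$ (legitimate by the DCC): if $Z$ is not already in the desired form, apply \eqref{eq:s2mod}, \eqref{eq:R3mod}, or a commuting move inside $Z$; every resulting diagram is strictly smaller than $A$ in $\le$ (shorter, or of equal length with one triple decreased), applying a relation inside a subdiagram of such a diagram changes it only modulo $I_{<A}$, and so the difference of the two elementary resolutions of $A$ is congruent, modulo $I_{<A}$, to the analogous difference for strictly smaller diagrams, handled by induction. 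This leaves only the model cases $ststs$ and $stsuts$ of \eqref{eq:Samb}. I expect this bookkeeping — confirming that the new lower-order terms are absorbed into $I_{<A}$ — to be the only real subtlety; everything else is a copy of the $\SymCat$ argument.

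Finally I would assemble the pieces. A diagram is irreducible exactly when it has no subword $\s\s$ or $sts$, i.e. it is a reduced expression that is a sink of $\bar X_w$; by Theorem \ref{thm:sink} there is a unique such expression $\un w_0$ for each $w \in S_n$, and it is reduced, so $\bar X_{\irr} = \{\un w_0\}_{w}$. By Theorem \ref{thm:MDL} (using the minimal set just established), $\bar X_{\irr}$ is a basis of $\CC$ iff \eqref{eq:Samb} is resolvable. To upgrade to an arbitrary choice of reduced expressions $\{\un w\}$: any two reduced expressions of $w$ are related by braid and commuting moves, the commuting moves are identities in $\CC$, and each move $sts \to tst$ contributes only strictly shorter terms by \eqref{eq:R3mod}, so $\un w \equiv \un w_0$ modulo diagrams of length $< \ell(w)$, which — once $\{\un v_0\}$ is a basis — lie in the span of the $\un v_0$ with $\ell(v) < \ell(w)$. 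Hence the change of basis between $\{\un w\}$ and $\{\un w_0\}$ is unitriangular with respect to length, so one is a basis iff the other is. For the forward direction, the sink expressions $\un w_0$ are themselves a legitimate choice of reduced expressions, so if every choice gives a basis then in particular $\bar X_{\irr}$ is a basis and \eqref{eq:Samb} is resolvable; for the reverse, resolvability gives that $\bar X_{\irr}$ is a basis, and the unitriangularity argument then covers every choice.
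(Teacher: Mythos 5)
Your proof is correct and follows essentially the same route as the paper: the paper's own (much terser) argument likewise takes the partial order of Definition \ref{def:partial} extended as in Remark \ref{rmk:partialextended}, observes that the modified relations are the Coxeter relations modulo strictly shorter words, and combines Lemma \ref{lem:Sambenough}, Theorem \ref{thm:MDL}, and Theorem \ref{thm:sink}. Your extra care in re-running the $Z$-reduction of Lemma \ref{lem:Sambenough} modulo $I_{<A}$ and in making the passage to arbitrary reduced expressions a unitriangular change of basis just fills in details the paper leaves implicit.
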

	
\begin{proof} The only thing which needs proving is the existence of a suitable partial order, with the correct list of irreducible diagrams. The partial order $\le$ of Definition
\ref{def:partial}, modified as in Remark \ref{rmk:partialextended}, will clearly suffice, as each relation is identical to the Coxeter relations modulo words of shorter length.
\end{proof}

Let us examine what checking the ambiguities entails.

The ambiguity $\s \s \s$ is always resolvable, as both elementary resolutions yield $\a \s \s + \b \s$.
	
Checking the ambiguity $ssts$ directly is a chore. One can resolve it into linear combinations of $\{1, s, t, st, ts, tst\}$ in two ways. Let us examine the coefficient of $tst$ which occurs (ignoring any terms of shorter length).
\begin{subequations} \label{subeq:sstsbasictop}
\begin{equation} \un{ss}ts \mapsto \a sts \mapsto \a tst. \end{equation}
\begin{equation} s\un{sts} \mapsto stst + b sts \mapsto tstt + b tst + b tst \mapsto (\a + 2b) tst. \end{equation}
\end{subequations}
Thus, for \eqref{subeq:sstsbasictop} to agree, one requires $2b=0$.  Let us assume that $\Bbbk$ has no 2-torsion for the rest of the calculation, so that $b = 0$.

Checking the $ts$ coefficient in the same way yields the equality $\b + \a b = \b + b^2$, which holds when $b = 0$.

Similarly, checking the $tst$ coefficient in the ambiguity $stss$ yields $2a=0$. In general, checking the $w$ coefficient of $stss$ is the same as checking the $w^{-1}$ coefficient of
$ssts$, after swapping $a$ and $b$, and swapping $c$ and $d$ (this is clear from the word-reversing symmetry). So we set $a = 0$ for the rest of the calculation.

Let us now check the $st$ coefficient of $ssts$, assuming that $a = b = 0$.
\begin{subequations} \label{subeq:sstsbasicnext}
\begin{equation} \un{ss}ts \mapsto \a sts \mapsto 0. \end{equation}
\begin{equation} s\un{sts} \mapsto stst + d st \mapsto tstt + c st + d st \mapsto (c+d) st. \end{equation}
\end{subequations}
Thus the agreement of \eqref{subeq:sstsbasicnext} implies $c+d = 0$.

We leave the reader to verify that the $s$ coefficient of $ssts$ yields the equality $e = 0$, as does the $t$ coefficient. The $1$ coefficient yields the equality $\b(c+d) = 0$, which
is nothing new.

In conclusion, the ambiguities $ssts$ and $stss$ are resolvable if and only if $a = b = e = c+d = 0$, which we assume henceforth. We leave the reader to confirm that the ambiguity $ststs$ is resolvable under these assumptions.

To check if the ambiguity $stsuts$ is resolvable, one follows the two different paths from source to sink in $\bar{\G}_{stsuts}$. This time, lower terms are produced by
\eqref{eq:R3mod}, and one needs to check if the lower terms agree. We leave the reader to verify that the lower terms in \eqref{eq:path1} are
\begin{subequations}
\begin{equation} c suts + d tuts + c tsts + d tsus + c tusu + d tutu + c tstu + d ustu, \end{equation}
while the lower terms in \eqref{eq:path2} are
\begin{equation} c stus + d stut + c stst + d sust + c usut + d utut + c utst + d utsu. \end{equation}
Resolving each of these expressions further, taking the difference, and using $d = -c$, one eventually obtains
\begin{equation} \label{eq:stsutsbasic} c\b (st - ts + tu - ut). \end{equation}
\end{subequations}
Thus $stsuts$ is resolvable (given the assumptions above) if and only if $c \b = 0$.

\begin{thm} Assume that $\Bbbk$ has no 2-torsion. Then a modified symmetric category $\CC$ with the partial order $\le$ is monoidal Bergman type if and only if $a=b=e=c+d =0$ and $c \b
= 0$. \end{thm}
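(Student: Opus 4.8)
The plan is to leverage the machinery already in place: Theorem \ref{thm:MDL} reduces being monoidal Bergman type to the resolvability of a minimal set of ambiguities, and the preceding discussion has essentially identified such a set together with a suitable partial order. First I would observe that Lemma \ref{lem:Sambenough} goes through verbatim for a modified symmetric category: its proof uses only the shape of the leading terms $W_{\s\s}=\s\s$ and $W_{sts}=sts$ and the fact that applying relations to a proper sub-expression of a diagram $A$ stays inside $I_{<A}$, neither of which is affected by the lower-order terms in \eqref{subeq:coxmod}. By the proof of Proposition \ref{prop:MDLforScat}, the partial order $\le$ of Definition \ref{def:partial} (extended as in Remark \ref{rmk:partialextended}) is a monoidal ordering, compatible with the relations, satisfies the DCC, and has the same irreducibles as in the Coxeter case. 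Hence Theorem \ref{thm:MDL} tells us that $(\CC,\le)$ is monoidal Bergman type if and only if each of the five ambiguities $\s\s\s$, $ssts$, $stss$, $ststs$, $stsuts$ of \eqref{eq:Samb} is resolvable. The ambiguity $\s\s\s$ is resolvable unconditionally, since both elementary resolutions yield $\a\s\s+\b\s$.

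Next I would dispatch the three ambiguities living on three strands. For $ssts$, resolving it in the two ways produces two $\Bbbk$-linear combinations of the irreducibles $\{\id_3,s,t,st,ts,tst\}$, and resolvability amounts to the agreement of all six coefficients. The coefficient of $tst$ gives $2b=0$, hence $b=0$ since $\Bbbk$ has no $2$-torsion; substituting $b=0$, the coefficient of $st$ gives $c+d=0$, the coefficients of $s$ and of $t$ each give $e=0$, and the coefficients of $ts$ and $\id_3$ give nothing new. So $ssts$ is resolvable if and only if $b=e=c+d=0$. The analogous computation for $stss$ — or, more efficiently, the word-reversing symmetry relating $stss$ to $ssts$ (which interchanges the roles of $a$ and $b$, and of $c$ and $d$) — shows that $stss$ is resolvable if and only if $a=e=c+d=0$. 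A finite expansion then checks that, under $a=b=e=c+d=0$, the ambiguity $ststs$ is resolvable and imposes no further constraint. Thus the first four ambiguities are simultaneously resolvable precisely when $a=b=e=c+d=0$, which I assume for the remaining step.

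The crux is the ambiguity $stsuts$. Here I would follow the two directed paths from source to sink in the graph $\bar{\G}_{stsuts}$, as depicted in \eqref{eq:path1} and \eqref{eq:path2}, recording at each step the lower-order terms generated by \eqref{eq:R3mod}; the leading monomials along the two paths agree because $\bar{\G}_{stsuts}$ has a unique sink (Proposition \ref{prop:MS}), so resolvability is equivalent to the vanishing of the difference of the accumulated lower-order terms. Resolving each such term to normal form, taking the difference, and substituting $d=-c$, one arrives at $c\b(st-ts+tu-ut)$, which vanishes if and only if $c\b=0$. Assembling the cases: $(\CC,\le)$ is monoidal Bergman type if and only if all of \eqref{eq:Samb} is resolvable, if and only if $a=b=e=c+d=0$ and $c\b=0$. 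The main obstacle is precisely this last computation — unlike the three-strand ambiguities, which involve only a handful of terms, tracking the lower-order terms along both length-five paths in $\bar{\G}_{stsuts}$ and reducing each to normal form before comparing is the one genuinely lengthy piece of bookkeeping.
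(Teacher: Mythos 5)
Your proposal is correct in its conclusion and follows essentially the same route as the paper: invoke Proposition \ref{prop:MDLforScat} (i.e.\ the partial order of Definition \ref{def:partial} plus the minimal set of ambiguities \eqref{eq:Samb}), then extract $2b=0$, $2a=0$, $c+d=0$, $e=0$ from the three-strand ambiguities and $c\b=0$ from $stsuts$ via the two paths \eqref{eq:path1}--\eqref{eq:path2}. One bookkeeping caveat: your intermediate claim that $ssts$ \emph{alone} is resolvable iff $b=e=c+d=0$ is not quite right. With only $b=0$ imposed, the $st$-coefficient of the two resolutions of $ssts$ forces $a\a+c+d=0$ and the $s$-coefficient forces $a\b+e=0$, so there are solutions with $a\ne 0$ where $ssts$ resolves but $c+d\ne 0$; the clean conditions $c+d=0$ and $e=0$ only drop out after $a=0$ has been extracted from the $tst$-coefficient of $stss$, which is why the paper interleaves the two checks. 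Since the theorem only concerns the joint resolvability of all ambiguities in \eqref{eq:Samb}, this does not affect your final equivalence $a=b=e=c+d=0$ and $c\b=0$.
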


\begin{proof} This follows from Proposition \ref{prop:MDLforScat}. The computations above checked all the ambiguities of \eqref{eq:Samb}. \end{proof}
	
In particular, there are very few isomorphism classes of Bergman type modified symmetric categories.

%========================
\subsection{Applications to Hecke-type categories}
\label{subsec:hecke}
%========================

Once more, the same proof as in \S\ref{subsec:sym}, combined with the observations of \S\ref{subsec:noncentral}, generalizes easily to a much broader class of categories. Checking the
ambiguities is more of a chore, and will be done (under some minor additional assumptions) in \cite{EGarcia1}, but we should emphasize that there are many more possibilities than in
\S\ref{subsec:symvar}.

The following definition encapsulates many categories in the literature, like Khovanov-Lauda-Rouquier algebras \cite{KhoLau09, Rouq2KM-pp} and Webster algebras \cite{WebsKIHRT}.

\begin{defn} \label{def:HeckePart1} A \emph{Hecke-type category} is a strict monoidal category $\CC$ with the following type of presentation. The generating objects are a set $I$, so
that objects are words $\un{i}$ in the alphabet $I$. We think of the elements of $I$ as \emph{colors}. The morphisms are generated by: \begin{itemize} \item For each $i \in I$, some
commutative ring $R_i$ of maps in $\End(i)$. We think of these maps as ``dots,'' analogous to the dots in the Khovanov-Lauda calculus. Here is the picture for multiplication by $f \in R_i$, when $i$ is the color red.
\begin{equation} \label{eq:dotpic} {
\labellist
\small\hair 2pt
 \pinlabel {$f$} [ ] at 4 14
 \pinlabel {$\textrm{or}$} [ ] at 26 14
 \pinlabel {$f$} [ ] at 46 14
\endlabellist
\centering
\ig{1}{finRi}
} \end{equation} \item For some subset $C$ of ordered
pairs $(i,j) \in I \times I$, there is a \emph{crossing} $\s_{ij} \co ij \to ji$. Here is the picture, when $i$ is red and $j$ is blue. \begin{equation} \label{eq:redbluecross} \ig{1.3}{redbluecross} \end{equation} \end{itemize} If $(i,j)$ is not in the subset $C$, then $\s_{ij}$ is not a generator, and we say that crossing $i$ over $j$ is \emph{not permitted}.

With these generators, a diagram is a crossing diagram, where the strands are each colored by an element of $I$, and only certain colors are permitted to cross. In addition, the diagram
is decorated with dots in various places; here is an example.
\[{
\labellist
\tiny\hair 2pt
 \pinlabel {$f$} [ ] at -4 20
 \pinlabel {$g$} [ ] at 13 5
 \pinlabel {$h$} [ ] at 13 31
\endlabellist
\centering
\ig{1.3}{randomexample}
}\] We refer to a diagram with the dots removed as the \emph{underlying crossing diagram} or \emph{underlying expression}.

The relations will be discussed below. \end{defn}

\begin{remark} Note that we have not decided to give a presentation for the rings $R_i$. Instead, we prefer to think of the rings $R_i$ as a black box,
along the lines of the discussion in \S\ref{subsec:noncentral}. \end{remark}

Let $\un{i}$ and $\un{j}$ be two sequences of colors of length $n$. We call $w \in S_n$ a \emph{permissible permutation} if $w(\un{i}) = \un{j}$, and the crossings in a reduced
expression of $w$ are all permissible. This does not depend on the reduced expression, only on the inversion set $I(w)$; the colors which cross in an expression form a subset of $I
\times I$ which is not changed by any braid relation. More generally, we refer to a (non-reduced) expression of a permutation as \emph{permissible} if the crossings are all permissible.
It is clear that every diagram representing a morphism from $\un{i}$ to $\un{j}$ has underlying crossing diagram given by a permissible expression, and every permissible expression
gives such a diagram.

\begin{example} Consider diagrams $ij \to ij$ with $i \ne j$. Only the identity is a permissible permutation. If either the crossing $\s_{ij}$ or the crossing $\s_{ji}$ is not
permitted, then only the empty word is a permissible expression. If both are permitted, then the permissible expressions are $\s^k$ for $k$ even, where $\s$ is the generator of $S_2$.
\end{example}

For any sequence $\un{i}$, the commutative ring $R_{\un{i}} = R_{i_1} \sqot R_{i_2} \sqot \cdots \sqot R_{i_d}$ is a subring of $\End(\un{i})$, given by decorations of the identity
crossing diagram. We tend to use the rectangle notation from \eqref{eq:dotpic}, rather than the dot notation, to more efficiently encode an element of $R_{\un{i}}$ (see the examples
below). We are interested in imposing relations upon these generators, such that the morphism spaces $\Hom(\un{i},\un{j})$ are free as $R_{\un{j}}$-modules on the left (and as
$R_{\un{i}}$-modules on the right), with a basis given a fixed reduced expression for each permissible permutation. For this to occur, we need certain relations: a quadratic relation
like \eqref{eq:s2mod}, a braid relation like \eqref{eq:R3mod} (that is, we need the Coxeter relations modulo shorter expressions), and a dot-moving relation like \eqref{eq:sf}. However,
in each of these relations, the coefficients that appear (like $a, b, c, d, e$ in \eqref{eq:R3mod}) can be elements of $R_{\un{j}}$. Moreover, only those shorter expressions which are
permissible can appear in such a relation.
	
\begin{defn*}[Definition \ref{def:HeckePart1} Continued] The relations in a Hecke-type category have the following form. Once the source and target of a morphism are understood, we may write $s$ and $t$ for the corresponding elements of $S_3$, viewed as crossing diagrams (independent of color). In the drawings below, $i$ is red, $j$ is blue, and $k$ is green.

\begin{subequations} \label{subeq:HeckeRelns}
If $\s_{ii}$ is permitted then one has the following relation in $\End(ii)$.
\begin{equation} \label{eq:iiHecke} \s_{ii} \s_{ii} = \a_{i} \s_{ii} + \b_{i} \id_{ii} \end{equation}
\[{
\labellist
\small\hair 2pt
 \pinlabel {$=$} [ ] at 14 13
 \pinlabel {$\a_i$} [ ] at 28 20
 \pinlabel {$+$} [ ] at 40 13
 \pinlabel {$\b_i$} [ ] at 52 20
\endlabellist
\centering
\ig{1.3}{XXNP}
}\]
for some $\a_{i}, \b_i \in R_{ii}$.

If $\s_{ij}$ and $\s_{ji}$ are permitted then one has the following relation in $\End(ji)$.
\begin{equation} \label{eq:ijHecke} \s_{ij} \s_{ji} = Q_{ji} \id_{ji} \end{equation}
\[{
\labellist
\small\hair 2pt
 \pinlabel {$=$} [ ] at 17 15
 \pinlabel {\tiny $Q_{ji}$} [ ] at 36 20
\endlabellist
\centering
\ig{1.3}{XXQ}
}\]
for some $Q_{ji} \in R_{ji}$.

If $\s_{ii}$ is permitted then one has the following relation in $\End(iii)$.
\begin{equation} \label{eq:iiiHecke} sts = \l_{iii} tst + a_i st + b_i ts + c_i s + d_i t + e_i \end{equation}
\[{
\labellist
\tiny\hair 2pt
 \pinlabel {$= \l_{iii}$} [ ] at 28 20
 \pinlabel {$+$} [ ] at 68 20
 \pinlabel {$+$} [ ] at 108 20
 \pinlabel {$+$} [ ] at 148 20
 \pinlabel {$+$} [ ] at 188 20
 \pinlabel {$+$} [ ] at 228 20
 \pinlabel {$a_i$} [ ] at 88 32
 \pinlabel {$b_i$} [ ] at 128 32
 \pinlabel {$c_i$} [ ] at 168 32
 \pinlabel {$d_i$} [ ] at 208 32
 \pinlabel {$e_i$} [ ] at 248 32
\endlabellist
\centering
\ig{1.3}{R3genpoly}
}\]
for some $a_i,b_i, c_i, d_i, e_i \in R_{iii}$ and some invertible scalar $\l_{iii}$.

If $\s_{ii}$ and $\s_{ij}$ are permitted then one has the following relation in $\Hom(iij,jii)$.
\begin{equation} \label{eq:iijHecke} sts = \l_{jii} tst + p_{jii} st \end{equation}
\[{
\labellist
\small\hair 2pt
 \pinlabel {$= \l_{jii}$} [ ] at 28 20
 \pinlabel {$+$} [ ] at 68 20
 \pinlabel {\tiny $p_{jii}$} [ ] at 88 32
\endlabellist
\centering
\ig{1.3}{R3iij}
}\]
for some $p_{jii} \in R_{jii}$ and some invertible scalar $\l_{jii}$.

If $\s_{ii}$ and $\s_{ji}$ are permitted then one has the following relation in $\Hom(jii, iij)$.
\begin{equation} \label{eq:jiiHecke} sts = \l_{iij} tst + p_{iij} ts \end{equation}
\[{
\labellist
\small\hair 2pt
 \pinlabel {$= \l_{iij}$} [ ] at 28 20
 \pinlabel {$+$} [ ] at 68 20
 \pinlabel {\tiny $p_{iij}$} [ ] at 88 32
\endlabellist
\centering
\ig{1.3}{R3jii}
}\]
for some $p_{iij} \in R_{iij}$ and some invertible scalar $\l_{iij}$.

If $\s_{ii}$ and $\s_{ij}$ and $\s_{ji}$ are permitted then one has the following relation in $\End(iji)$.
\begin{equation} \label{eq:ijiHecke} sts = \l_{iji} tst + q_{iji} \id_{iji} \end{equation}
\[{
\labellist
\small\hair 2pt
 \pinlabel {$= \l_{iji}$} [ ] at 28 20
 \pinlabel {$+$} [ ] at 68 20
 \pinlabel {\tiny $q_{iji}$} [ ] at 88 32
\endlabellist
\centering
\ig{1.3}{R3iji}
}\]
for some $q_{iji} \in R_{iji}$ and some invertible scalar $\l_{iji}$.

If $\s_{ij}$, $\s_{ik}$, and $\s_{jk}$ are all permitted, then one has the following relation in $\Hom(ijk,kji)$.
\begin{equation} \label{eq:ijkHecke} sts = \l_{kji} tst. \end{equation}
\[{
\labellist
\small\hair 2pt
 \pinlabel {$= \l_{kji}$} [ ] at 28 20
\endlabellist
\centering
\ig{1.3}{R3ijk}
}\]
for some invertible scalar $\l_{kji}$.

If $\s_{ii}$ is permitted, then one has the following relation in $\End(ii)$ for each $f \in R_{ii}$.
\begin{equation} \label{eq:sfHecke} \s_{ii} f = \phi_i(f) \s_{ii} + \pa_i(f) \id_{ii} \end{equation}
\[{
\labellist
\tiny\hair 2pt
 \pinlabel {$f$} [ ] at 7 9
 \pinlabel {$\phi_i(f)$} [ ] at 40 21
 \pinlabel {$\pa_i(f)$} [ ] at 72 21
 \pinlabel {\small $=$} [ ] at 23 14
 \pinlabel {\small $+$} [ ] at 55 14
\endlabellist
\centering
\ig{1.5}{Xf}
}\]
for some linear maps $\phi_i \co R_{ii} \to R_{ii}$ and $\pa_i \co R_{ii} \to R_{ii}$.

If $\s_{ij}$ is permitted, then one has the following relation in $\Hom(ij,ji)$ for each $f \in R_{ij}$.
\begin{equation} \label{eq:ijfHecke} \s_{ij} f = \phi_{ij}(f) \s_{ij} \end{equation}
\[{
\labellist
\tiny\hair 2pt
 \pinlabel {$f$} [ ] at 7 9
 \pinlabel {\small $=$} [ ] at 23 14
 \pinlabel {$\phi_{ij}(f)$} [ ] at 40 21
\endlabellist
\centering
\ig{1.5}{sijfHecke}
}\]
for some linear map $\phi_{ij} \co R_{ij} \to R_{ji}$.

This concludes the list of relations.
\end{subequations} \end{defn*}

\begin{example} The Khovanov-Lauda-Rouquier algebras \cite{KhoLau09,Rouq2KM-pp} attached to a Cartan datum (and a matrix $Q$ of polynomials) are an example of a Hecke-type category. In this example, every crossing $\s_{ij}$ is permissible. The dot rings $R_i$ are each polynomial rings in one variable (with different gradings). \end{example}
	
\begin{example} The Webster algebras \cite{WebsKIHRT} (for simplicity, let us do the case of $\sl_2$) are an example of a Hecke-type category. The set $I$ of colors consists of one kind
of black strand, and one kind of red strand for each $\l \in \Z_{\ge 0}$. It is not permissible to cross two red strands, but other crossings are permissible. $R_{\textrm{black}}$ is a
polynomial ring in one variable, while $R_{\textrm{red}_\l}$ is just the ground field, for any $\l \ge 0$. \end{example}

\begin{thm} \label{thm:MDLforHecke} Let $\CC$ be a Hecke-type category. Then $\Hom(\un{i}, \un{j})$ has a basis (over dots $R_{\un{j}}$ acting on the left) indexed by permissible $w \in S_n$, obtained by taking an arbitrary
reduced expression for each $w$, if and only if: \begin{itemize} \item Each of the ambiguities in \eqref{eq:Samb} is resolvable, for each permissible coloring. \item Each of the
ambiguities below is resolvable, for each possible coloring. \begin{equation} \label{eq:Sambf} \ig{1.3}{moreoverlaps} \end{equation} \end{itemize} \end{thm}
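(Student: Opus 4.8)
The plan is to deduce the theorem from the monoidal diamond lemma, Theorem \ref{thm:MDL}, in exactly the way Proposition \ref{prop:MDLforScat} was deduced for modified symmetric categories, but now folding in the device of \S\ref{subsec:noncentral} to accommodate the non-central dot rings $R_i$. Concretely, I would first rewrite the Hecke-type presentation by adjoining a generator $m_f$ for every color $i$ and every $f \in R_i$, replacing each dot occurring in the relations \eqref{eq:iiHecke}--\eqref{eq:ijfHecke} by the corresponding word in the $m_f$, adding the composition relations $m_fm_g \mapsto m_{fg}$ of type \eqref{eq:mult}, and adding a relation of type \eqref{eq:bullet} that absorbs a maximal block of dots at the appropriate end of a diagram into a coefficient in $R_{\un j}$. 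The dot-migration relations \eqref{eq:sfHecke} and \eqref{eq:ijfHecke} are oriented so as to drive all dots toward that end, so that they become families of relations of the shape \eqref{eq:sf}. There are then three things to do: (i) produce a partial order satisfying the hypotheses of Theorem \ref{thm:MDL} whose irreducible diagrams are exactly the claimed basis; (ii) exhibit a small minimal set of ambiguities; and (iii) read off the equivalence.

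For (i), on the underlying uncolored crossing diagram of a morphism I would use the order $\le$ of Definition \ref{def:partial}, extended to all expressions as in Remark \ref{rmk:partialextended}. This is the essential ingredient: by (the diagrammatic analogue of) Lemma \ref{lem:itscompat} it is a monoidal ordering satisfying the DCC, and by Theorem \ref{thm:sink} --- which rests on Corollary \ref{cor:MS} and Proposition \ref{prop:MS} --- the graph $\bar X_w$ attached to each permissible permutation $w$ has a unique sink, which is a reduced expression; so the diagrams irreducible for the crossing relations are exactly these sinks. I would then refine $\le$ lexicographically by two further $\NM$-valued statistics that only break ties between diagrams with equivalent underlying expressions: the number of $m_f$-symbols (strictly decreased by \eqref{eq:mult} and by the dot-absorbing relation), and then a ``dot-height'' statistic (strictly decreased by \eqref{eq:sfHecke} and \eqref{eq:ijfHecke}). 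One checks this composite order is monoidal, has the DCC, and is compatible with the relations: every relation acquires a unique maximal term whose coefficient is a unit --- for the quadratic and dot relations this is clear, and for the braid relations \eqref{eq:iiiHecke}--\eqref{eq:ijkHecke} the maximal term is $sts$ (coefficient $1$) even though the competitor $tst$ has the same length, because $\le$ restricts on the relevant triple of strands to the order $\le_3$ of Definition \ref{def:S3Redux}, in which $tst <_3 sts$. This is precisely where Manin--Schechtmann theory does the work, and why the naive length order --- which fails for $S_n$, $n\ge 4$, as in \S\ref{subsec:coxeterproblem} --- is insufficient. With this order the irreducible diagrams are exactly the $\le$-sink reduced expressions carrying a coefficient in $R_{\un j}$, which is the asserted basis.

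For (ii), which is the substantive step, I would argue as in the proof of Lemma \ref{lem:Sambenough}. Given an arbitrary ambiguity $(A,W_r,W_s)$, any proper subdiagram of $A$ containing both $W_r$ and $W_s$ lies strictly below $A$, so the two elementary resolutions already agree modulo $I_{<A}$ once we pass to it; moreover, anything occupying the ``filler'' region around the two subdiagrams may itself be resolved, since in each term of the two elementary resolutions that filler sits inside a diagram strictly below $A$. For two purely-crossing relations (among \eqref{eq:iiHecke}, \eqref{eq:ijHecke}, \eqref{eq:iiiHecke}--\eqref{eq:ijkHecke}, whose uncolored shadows are exactly the two relations of $\SymCat$), this normalization reproduces, colour by colour, the ambiguities of \eqref{eq:Samb}, using $S_k = S_{k-1}\cup S_{k-1}s_1S_{k-1}$ to cut the filler permutation down to the identity or $s_1$. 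When at least one of $r,s$ is a dot relation, overlaps of two dot relations (and of the dot-absorbing relation with $m_fm_g\mapsto m_{fg}$) are resolvable by associativity of the $R_i$ and need not be listed, while an overlap of a dot with a crossing relation normalizes --- after shrinking and after reducing the filler dots via \eqref{eq:mult} and \eqref{eq:sfHecke}/\eqref{eq:ijfHecke} --- to one of three shapes: a dot meeting the quadratic relation $\s\s$, a dot meeting a braid relation $sts$, or a crossing meeting the composition relation $m_fm_g\mapsto m_{fg}$ on a single strand; these, over all permissible colorings, are precisely the families pictured in \eqref{eq:Sambf}. The last of these is the ambiguity $(s\,m_f\,m_g,\ s\,m_f,\ m_f\,m_g)$ flagged at the end of \S\ref{subsec:noncentral}, whose resolvability encodes that $\phi_i$ is a ring homomorphism and $\pa_i$ a $\phi_i$-twisted derivation.

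For (iii), Theorem \ref{thm:MDL} then says $\Hom(\un i,\un j)$ is free over $R_{\un j}$ on the $\le$-sink reduced expressions if and only if every ambiguity of \eqref{eq:Samb} and \eqref{eq:Sambf} is resolvable; and passing between ``the sink reduced expression for each $w$'' and ``an arbitrary reduced expression for each $w$'' is a routine downward induction on length, since two reduced expressions of $w$ differ by braid moves, each changing an element by an invertible scalar plus strictly shorter terms. I expect the main obstacle to be step (ii): proving that \eqref{eq:Samb} together with \eqref{eq:Sambf} genuinely is a minimal set of ambiguities, i.e. that freely-placed dots in the loose ends of an overlap can never manufacture an essentially new overlap type, so that every ambiguity can be shrunk and renormalized onto this finite list. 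A secondary but real difficulty is the verification in step (i) of the DCC and of compatibility with the braid relations for the composite order, which is exactly where the subtlety of Definition \ref{def:partial} is indispensable.
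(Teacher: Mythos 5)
Your proposal is correct and follows essentially the same route as the paper: the paper's proof simply combines the coloring-adapted version of Definition \ref{def:partial}, Lemma \ref{lem:Sambenough} extended to the dot ambiguities of \eqref{eq:Sambf}, Theorem \ref{thm:sink}, the $m_f$-device of \S\ref{subsec:noncentral}, and Theorem \ref{thm:MDL}, exactly as you outline. Your write-up just makes explicit (the order refinement by dot statistics, the normalization of dot-crossing overlaps, the passage from the sink to an arbitrary reduced expression) what the paper leaves as ``easy to argue.''
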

	
\begin{proof} One can easily modify the relation of Definition \ref{def:partial} to account for the extra data of the coloring. Given Lemma \ref{lem:Sambenough}, it is easy to argue
that the ambiguities in \eqref{eq:Samb} and \eqref{eq:Sambf} combine to give a minimal set of ambiguities. Then Theorem \ref{thm:MDL}, Theorem \ref{thm:sink}, and the discussion of
\S\ref{subsec:noncentral} combine to give this theorem. \end{proof}

As stated earlier, in this paper we will not compute what compatibilities between these coefficients are required in order that the minimal ambiguities of \eqref{eq:Samb} and
\eqref{eq:Sambf} are resolvable. Under some simplifying assumptions, this will be done in \cite{EGarcia1}. Note that, for Webster algebras and Khovanov-Lauda-Rouquier algebras, checking the ambiguities is greatly simplified by the fact that $a_i = b_i = c_i = d_i = e_i = 0$ in \eqref{eq:iiiHecke}, so that it becomes a relatively easy exercise (and worthwhile practice for the neophyte).

\begin{remark} Checking the ambiguity $\s_{ii} m_f m_g$ will imply that $\phi_i$ is a homomorphism of rings. Similarly, so is $\phi_{ij}$. The map $\pa_i$ is not a homomorphism, instead satisfying some kind of modified Leibniz rule.

If the maps $\phi_i$ and $\phi_{ij}$ are isomorphisms, then one can use these same relations to move dots to the right rather than the left. Then $\CC$ will have the same basis over
dots acting on the right. \end{remark}

%========================
\subsection{A further direction}
\label{subsec:question}
%========================

We end this paper by posing a question for the interested student. Manin-Schechtmann theory is a statement only about symmetric groups, which are the Coxeter groups of type $A$. It has
begun to be generalized to type $B$, see \cite{SheSuh}, though one does not expect that it will generalize to other types, even type $D$ (see \cite{EThick} for further discussion).
Hence, we are no closer to a Bergman diamond lemma which works for (modified) Coxeter group algebras in general.

The Bergman diamond lemma relies upon a partial order, and upon choosing a direction for each relation. However, it is reasonable to ask whether there might be a modification of the
diamond lemma which works for preorders, where certain relations can be applied in both directions. For example, in a Coxeter presentation, perhaps one wishes to apply the braid
relations in either direction, but requires the quadratic relation to be applied as $\s\s \mapsto 1$. Working with preorders creates new ``circular ambiguities,'' as one uses relations
to travel in circular paths around equivalent words in this preorder. There are other new kinds of ambiguities as well: two different words in the same equivalence class might admit a
relation, giving two possibly different ways of resolving the equivalence class.
 
To resolve the possible (infinite) chains of resolutions which can now occur, one needs a fairly strong understanding of the equivalence classes. For Coxeter groups, we know a lot about
reduced expression graphs: all their monodromy (i.e. generators of $\pi_1$ of the graph) comes from finite parabolic subgroups of rank 3, and appearances of their longest elements as
subwords. For example, we have already seen the loop in the reduced expression graph of $stsuts$: follow \eqref{eq:path1} forwards then \eqref{eq:path2} backwards. Perhaps there is a variant on the diamond lemma which applies to this preorder, and for which the minimal ambiguities are classifiable. 

\begin{remark} There is also monodromy in reduced expression graphs coming from disjoint ambiguities (i.e. applying braid relations to disjoint parts of a word), which are always
resolvable. \end{remark}

Having such a variant on the diamond lemma would be more philosophically satisfying, as choosing particular reduced expressions (like the Manin-Schechtmann sink) seems like an
unnatural thing to be forced to do.

{\bf Addendum}. Circular ambiguities should be treatable using the same ``non-terminating'' techniques as were showcased in \cite{Dupont}.

%========================

\bibliographystyle{plain}
\bibliography{mastercopy}

\end{document}